\theoremstyle{plain}
\newtheorem{theorem}{Theorem}
\newtheorem{proposition}{Proposition}[section]
\newtheorem{lemma}[proposition]{Lemma}
\newtheorem{corollary}{Corollary}
\theoremstyle{definition}
\newtheorem{remark}{Remark}[section]
\newenvironment{customthm}[1]
  {\innercustomthm}
  {\endinnercustomthm}
\numberwithin{equation}{section}
\newcommand\R{{\mathbb R}}
\newcommand{\Z}{\mathbb{Z}}
\newcommand{\Q}{\mathbb{Q}}
\newcommand\Torus{{\mathbb T}}
\newcommand{\F}{\mathcal{F}}
\newcommand{\dt}{\partial_t}
\newcommand{\dx}{\partial_x}
\newcommand{\dz}{\partial_z}
\newcommand{\db}{\partial_\sigma}
\newcommand{\del}{\nabla}
\newcommand{\les}{\lesssim}
\newcommand{\ges}{\gtrsim}
\newcommand{\jap}[1]{\left\langle #1 \right\rangle}
\newcommand{\inp}[2]{\left\langle #1,#2 \right\rangle}
\newcommand{\lap}{\Delta}
\newcommand{\wsint}{\int_{\Torus \times \R \times \Torus}}
\newcommand{\HL}[2]{#1^\text{Hi}#2^\text{Lo}}
\newcommand{\n}[1]{#1_{\neq}}
\newcommand{\z}[1]{#1_0}
\newcommand{\djj}{\partial_j}
\newcommand{\di}{\partial_i}
\newcommand{\dij}{\partial_{ij}}
\newcommand{\dXY}{\partial_{XY}}
\newcommand{\dX}{\partial_X}
\newcommand{\dY}{\partial_Y}
\newcommand{\dZ}{\partial_Z}
\newcommand{\dB}{\partial_\sigma}
\newcommand{\til}[1]{\tilde{#1}}
\newcommand{\ind}{\mathds{1}}
\newcommand{\te}{\text}
\newcommand{\lam}{\lambda}
\newcommand{\DN}{\jap{\del}^N}
\newcommand{\pz}{P_{l \neq 0}}
\begin{document}
 
\title{On the Sobolev stability threshold of 3D Couette flow in a homogeneous magnetic field } 
\author{Kyle Liss\footnote{\textit{kliss@math.umd.edu}}}

\date{\today}
\maketitle

\begin{abstract} 
We study the stability of the Couette flow $(y,0,0)^T$ in the 3D incompressible magnetohydrodynamic (MHD) equations for a conducting fluid on $\Torus \times \R \times \Torus$ in the presence of a homogeneous magnetic field $\alpha(\sigma, 0, 1)$. We consider the inviscid, ideal conductor limit $\textbf{Re}^{-1}$, $\textbf{R}_m^{-1} \ll 1$ and prove that for strong and suitably oriented background fields the Couette flow is asymptotically stable to perturbations small in the Sobolev space $H^N$. More precisely, we show that if $\alpha$ and $N$ are sufficiently large, $\sigma \in \R \setminus \Q$ satisfies a generic Diophantine condition, and the initial perturbations $u_{\te{in}}$ and $b_{\te{in}}$ to the Couette flow and magnetic field, respectively, satisfy $\|(u_{\te{in}},b_{\te{in}})\|_{H^N} = \epsilon \ll \textbf{Re}^{-1}$, then the resulting solution to the 3D MHD equations is global in time and the perturbation $(u(t,x+yt,y,z),b(t,x+yt,y,z))$ remains $\mathcal{O}(\textbf{Re}^{-1})$ in $H^{N'}$ for some $N'(\sigma) < N$. Our proof establishes enhanced dissipation estimates describing the decay of the $x$-dependent modes on the timescale $t \sim \textbf{Re}^{1/3}$, as well as inviscid damping of the velocity and magnetic field that agrees with the optimal decay rate for the linearized system. In the Navier-Stokes case, high regularity control on the perturbation in a coordinate system adapted to the mixing of the Couette flow is known only under the stronger assumption $\epsilon \ll \textbf{Re}^{-3/2}$ \cite{BGM15I}. The improvement in the MHD setting is possible because the magnetic field induces time oscillations that partially suppress the lift-up effect, which is the primary transient growth mechanism for the Navier-Stokes equations linearized around the Couette flow.
\end{abstract}

\setcounter{tocdepth}{1}
{\small\tableofcontents}

\section{Introduction} \label{sec:intro}

\subsection{Problem statement and background} \label{intro:problemstatement}
In this paper, we consider the 3D incompressible MHD equations set on $\Torus \times \R \times \Torus$:
\begin{equation}{\label{eq:general}}
\begin{cases}\dt \tilde{u} + \tilde{u}\cdot \del \tilde{u} - \tilde{b} \cdot \del \tilde{b} = -\del \tilde{p} + \nu \lap \tilde{u}, \\ 
\dt \tilde{b} + \tilde{u}\cdot \del \tilde{b} - \tilde{b}\cdot \del \tilde{u} = \mu \lap \tilde{b}, \\ 
\del \cdot \tilde{u} = \del \cdot \tilde{b} = 0.
\end{cases}
\end{equation}
Here, $\nu = \textbf{Re}^{-1} > 0$ is the inverse Reynolds number, $\mu = \textbf{R}_{\text{m}}^{-1} > 0$ is the inverse magnetic Reynolds number, and $\Torus$ is the periodized interval $[0,1]$. The functions $\tilde{u}:  \R^+ \times \Torus \times \R \times \Torus \to \R^3$, $\tilde{b}:  \R^+ \times \Torus \times \R \times \Torus \to \R^3$, and $\tilde{p}: \R^+ \times \Torus \times \R \times \Torus \to \R$ denote the velocity, magnetic field, and pressure, respectively, and we write $(t,x,y,z) \in \R^+ \times \Torus \times \R \times \Torus$. \par 
Perhaps the simplest stationary solution to (\ref{eq:general}) with a nonzero velocity is the Couette flow $u_s = (y,0,0)^T$ in any homogeneous magnetic field $b_s = \alpha(\sigma, 0, 1)^T$ ($\alpha$, $\sigma \in \R$). Analyzing the stability of this solution in the inviscid, ideal conductor limit ($\nu$, $\mu \to 0$) serves as a model problem for understanding shear flow stability in magnetized plasmas, an area which has received considerable attention in the past \cite{Stuart1954,Chandrasekhar293,Hunt342,HughesTobias,Velik,chandrasekhar1981hydrodynamic}.
For an overview of MHD we refer to \cite{Davidson}, and for general texts on hydrodynamic stability theory see \cite{DR81,Yaglom}. To study the stability of $(u_s,b_s)$ we introduce the perturbations $u$ and $b$ defined by $\tilde{u} = u + u_s$ and $\tilde{b} = b + b_s$. They satisfy the system (denoting the component of a vector with a superscript)
\begin{equation}{\label{eq:origcoords}}
\begin{cases}\dt u + u\cdot \del u - b\cdot \del b + y\dx u  -\alpha \db b + \begin{pmatrix} u^2 \\ 0 \\ 0 \end{pmatrix} = -\del p^{NL} + 2\del \lap^{-1}\dx u^2 + \nu \lap u, \\ 
\dt b + u\cdot \del b - b\cdot \del u + y\dx b - \alpha \db u - \begin{pmatrix} b^2 \\ 0 \\ 0 \end{pmatrix} = \mu \lap b, \\ 
p^{\te{NL}} = (-\lap)^{-1}(\djj u^i \di u^j - \djj b^i \di b^j), \\ 
\del \cdot u = \del \cdot b = 0,\\ 
u(0) = u_{\text{in}}, \quad b(0) = b_{\text{in}},
\end{cases}
\end{equation}
where summation over repeated indices is implied and we have defined the directional derivative $\db = \sigma \dx + \dz$. \par 
A potential formulation of the nonlinear stability problem for (\ref{eq:origcoords}) is motivated by the phenomenon in 3D hydrodynamics known as \textit{subcritical transition}, which refers to when a linearly stable flow (see \cite{BGMreview,DR81} for precise definitions) is nevertheless experimentally unstable and transitions to turbulence at sufficiently high Reynolds number. Flow through a pipe, studied by Reynolds in his original experiments, provides a classic example. Indeed, laminar pipe flow becomes turbulent in experiments at sufficiently high Reynolds number, and yet numerical calculations suggest that the linearized system is spectrally stable (for any Reynolds number) \cite{DR81}. Distinct from this example is plane Poiseuille flow, which is linearly unstable for high enough Reynolds number, but typically transitions to turbulence in experiments well below the critical Reynolds number predicted by the linear theory \cite{CWP,Yaglom}. An idea dating back to Kelvin \cite{Kelvin87} to reconcile the linear stability with the experimental instability is that while a given flow might be nonlinearly stable for any fixed Reynolds number, its basin of attraction shrinks as $\nu \to 0$. The equilibrium is then unstable in practice at sufficiently high Reynolds number due to the inevitable presence of finite amplitude perturbations in experiments. This suggests that the natural problem is to quantify the largest perturbation possible, with respect to the Reynolds number, such that a given system does not transition to turbulence. \par 
For spectrally stable hydrodynamic shear flows (e.g., variations of Couette flow \cite{DR81}), transient growth mechanisms originating in the nonnormality of the linearized operator play an important role in the transition to turbulence \cite{Trefethen}. Similar effects can contribute to subcritical transition for magnetized shears \cite{MGCmhdshear}, and hence it is natural to extend Kelvin's idea to our MHD setting and formulate the stability problem for (\ref{eq:origcoords}) as follows \cite{BGMreview}:
\begin{equation}
  \tag{Q}\label{eq:question}
  \parbox{\dimexpr\linewidth-4em}{%
    \strut
    \textit{Given an initial norm $X_i$ and a final norm $X_f$ what are the smallest $\beta(X_i,X_f), \gamma(X_i,X_f) \ge 0$ such that if the initial perturbations $u_\text{in}$ and $b_\text{in}$ satisfy $$\mu^{-\beta}\|b_{in}\|_{X_i} + \nu^{-\gamma}\|u_{in}\|_{X_i} \ll 1$$ 
    then the solution is global in time, does not transition away from $(u_s,b_s)$, and converges back to $(u_s,b_s)$ as $t \to \infty$ in the sense that $$\|(u,b)\|_{L^{\infty} X_f} \ll 1, \quad \lim_{t \to \infty}\|(u(t),b(t))\|_{X_f} = 0?$$}
    \strut}
\end{equation}
Our goal in this paper is to contribute to the answer of (\ref{eq:question}) when $X_i$ and $X_f$ are Sobolev spaces adapted to the linear dynamics and in the special case where the ideal limit is taken with $\mu = \nu$ (we will henceforth write $\nu = \textbf{Re}^{-1} = \textbf{R}_m^{-1}$). Adopting the standard terminology, we refer to the number $\gamma \ge 0$ as the \textit{transition threshold}. It is not known a priori that the basin of attraction necessarily shrinks as a power law. Moreover, studies of the Couette flow in 2D \cite{BMV14,BVW16} and 3D \cite{BGM15I,BGM15II,BGM15III,WZ18} in the Navier-Stokes equations suggest that $\gamma$ might depend in a complicated way on the norms $X_i$ and $X_f$. 
\par

The transient growth mechanism most responsible for subcritical transition of 3D Couette flow in the Navier-Stokes equations is the lift-up effect. First noticed for more general shear flows in \cite{Ellingsen}, it predicts, in the linearized equations, linear in time growth of the velocity for $t \les \frac{1}{\nu}.$ Other growth mechanisms that play a role in the stability analysis include an algebraic growth in time of derivatives caused by the mixing, and an amplification of certain Fourier modes due to a transient unmixing of high frequency information to large scales. This latter effect was first noticed by Orr in \cite{Orr07} and is known as  the \textit{Orr mechanism}. We will need to contend with these same effects in our study of (Q) in the MHD setting. There is a crucial difference, however, in that the presence of a transverse magnetic field component partially suppresses the lift-up effect. This observation plays a fundamental role in the proof of our main result and is discussed in Sec. \ref{sec:linearlu}.
\subsection{Previous work} \label{sec:previous}

There is a substantial body of mathematical results on the analog of (\ref{eq:question}) for the Couette flow in the Navier-Stokes equations. The works most related to our present study are \cite{BGM15I} and \cite{WZ18}, which prove, in distinct senses, that $\gamma \le 3/2$ and $\gamma \le 1$, respectively, when the domain is $\Torus \times \R \times \Torus$ and $X_i$ and $X_f$ are Sobolev spaces. More specifically, the result in \cite{BGM15I} shows that if $\|u_{\text{in}}\|_{H^s} \ll \nu^{3/2}$ for any $s > 9/2$ then there holds 
$$ \|U\|_{L^\infty H^{s-2}} = \mathcal{O}(\nu^{1/2}), \quad \lim_{t \to \infty}\|\n{U}\|_{H^{s-2}} = 0, $$
where the subscript $\neq$ denotes the projection onto nonzero frequencies in $x$ (see Sec.~\ref{outline:notations}) and $U(t,x,y,z) = u(t,x + yt + t\psi(t,y,z),y - \psi(t,y,z),z)$ for a solution dependent function $\psi$ that remains $\mathcal{O}(\nu^{1/2})$ in $H^s$. The leading order effect of the coordinate transformation is to unwind by the mixing induced by the Couette flow, which amounts to modding out by the main component of the linear evolution. We thus refer to $U$, borrowing terminology from dispersive PDE, as the \textit{profile}. High regularity control on the profile gives quantitative information on the dynamics. For example, one can deduce that the mixing effect that characterizes the linear behavior persists as the leading order effect at the nonlinear level. Hence, the result in \cite{BGM15I} shows that for sufficiently regular initial data the solution looks essentially linear. On the other hand, the authors in \cite{WZ18} consider relatively low regularity ($H^2$ on the velocity variables) and prove that $\gamma \le 1$ when the derivatives are measured in the original coordinates. This result partially improves those in \cite{BGM15I} due to the weaker assumption on the initial data. It might be possible to extend the methods in \cite{WZ18} to obtain high regularity profile estimates for $\mathcal{O}(\nu)$ Sobolev data and thereby obtain a strict improvement on the results of \cite{BGM15I}, however, $\gamma \le 1$ in the sense of profile estimates on $\Torus \times \R \times \Torus$ is currently only known for infinite regularity perturbations lying in a Gevrey space \cite{BGM15II}. In fact, for Gevrey data it is possible to partially follow the lift-up instability, and hence even more precisely characterize the nonlinear dynamics \cite{BGM15II,BGM15III}. In this paper we take the approach of \cite{BGM15II,BGM15III,BGM15I} and prove profile estimates.  \par 
The known stability results in 2D are stronger because the lift-up effect is eliminated. In particular, for the Navier-Stokes equations on $\Torus \times \R$ it holds that $\gamma \le 1/2$ for Sobolev data \cite{BVW16} and $\gamma = 0$ for Gevrey data \cite{BMV14}. For the 2D Euler equations, asymptotic stability to Gevrey perturbations was proven on $\Torus \times \R$ in \cite{BMeuler} and, more recently, on the channel $\Torus \times [0,1]$ in \cite{Ionescu}. Moreover, it is known that the theorem in \cite{BMeuler} does not extend to Gevrey spaces weaker than those that it originally considered. This is a consequence of the recent work \cite{DM}, and leads to the conclusion that for Couette flow in the 2D Euler equations the dynamics of perturbations depend importantly on their regularity.


\subsection{Summary of main result}
We defer the complete statement of our main result until after we have discussed the linearization of (\ref{eq:origcoords}). However, it can be summarized as follows.
\begin{customthm}{1}[Summary] \label{thrm:sum}
Let $\mu = \nu \in (0,1]$ and $\sigma > 0$ be an irrational number that satisfies a generic Diophantine condition (see (\ref{eq:dioph1}) and (\ref{eq:dioph2}) for precise statements). Then, there exists $\til{c}(\sigma) \in \mathbb{N}$ such that for $\alpha$ and $N$ sufficiently large (depending only on $\sigma$) we have $\gamma(X_i = H^N, X_f= H^{N-\til{c}}) \le 1$ provided that $X_f$ measures derivatives on the linear profile $(U,B)$ defined by $(U(t,x,y,z),B(t,x,y,z)) = (u(t,x+yt,y,z),b(t,x+yt,y,z))$.
\end{customthm}

\begin{remark}
While taking $\mu = \nu$ is mathematically natural, it is usually not the case for real physical applications. Thus, it is of interest to consider $\mu \neq \nu$ and the more general double scaling problem posed in (Q). However, even for the simpler stationary solution $u_s = 0$, $b_s = (0,0,1)$, studying the 3D MHD equations with $\mu \neq \nu$ is known to create substantial mathematical difficulties (see \cite{WZ17} and the references therein), and our current proof makes heavy use of the $\mu = \nu$ structure.
\end{remark}

\subsection{Brief discussion of results and ideas of the proof} \label{sec:discussion}
Theorem~\ref{thrm:sum} shows that a sufficiently strong magnetic field with an appropriate irrational tilt has a stabilizing effect on the Couette flow. Indeed, our result establishes high regularity profile estimates for Sobolev space perturbations on $\Torus \times \R \times \Torus$ akin to those in \cite{BGM15I}, but under the weaker assumption of $\mathcal{O}(\nu)$ initial data. Moreover, while~(\ref{eq:origcoords}) is more complicated than the Navier-Stokes equations, our proof in many respects is much simpler than those in \cite{BGM15I,WZ18}. Most notably, our proof does not require a solution-dependent nonlinear change of coordinates. We are also able to treat the various estimates more generally since, without the lift-up effect, $u^1$ and $u^3$ behave essentially the same. \par

The fact that strong magnetic fields can have a stabilizing effect on a conducting fluid has been observed in the literature. For example, the works \cite{Chandrasekhar293,ChandrInv,Velik} show that a sufficiently large magnetic field can nullify linear instabilities in Taylor-Couette flow. Moreover, simulations in \cite{LiuChen} demonstrate that a background field parallel to a free shear layer tends to suppress the Kelvin-Helmholtz instability. 
For our present case of plane periodic Couette flow on $\Torus \times \R \times \Torus$, we find that the stabilization occurs because, as mentioned above, the magnetic field partially suppresses the lift-up effect. To understand this physically, note that the lift-up effect in the Navier-Stokes equations occurs as the fluid circulates in planes normal to the direction of the streamwise flow, which redistributes the mean streamwise velocity and can drastically alter the shear profile \cite{Ellingsen}. Now, when the fluid is electrically conducting and a sufficiently strong transverse magnetic field is present, the field lines provide a restoring force via the frozen-in-law that resists the rotation of fluid layers. Thus, instead of growth, in the MHD setting oscillations occur and are transmitted in the form of Alfv\'{e}n waves. See Sec.~\ref{sec:linearlu} for a detailed mathematical discussion. \par 

A key idea in the proof of Theorem 1 is that we can capture the oscillations induced by the magnetic field by using integration by parts in time via the identity 
\begin{equation} \label{eq:identity}
e^{i\omega(\mathbf{k}) t} = \frac{1}{i\omega(\mathbf{k})}\dt e^{i\omega(\mathbf{k}) t}.
\end{equation}
It turns out that for a Fourier mode with $\mathbf{k} = (k,\eta,l) \in \Z \times \R \times \Z$ the oscillation frequency behaves like $\omega(\mathbf{k}) \approx |\sigma k + l|$. One of the main difficulties in the proof is finding a way to utilize the oscillations for modes with $|\sigma k + l| \approx 0$; i.e., frequencies with wave vectors approximately perpendicular to the background magnetic field. This challenge underlies our idea to choose $\sigma$ irrational, as it provides a kind of non-resonance condition that ensures $|\sigma k + l| \neq 0$ for all $(k,l) \in \Z \times \Z$. For more details on the integration by parts in time and our strategy for absorbing losses incurred when the oscillation frequency tends to zero as $|k| \to \infty$ we refer to Secs.~\ref{sec:ibpt} and~\ref{outline:numbertheory} below. 
\par

Our proof also relies on the same stabilizing effects of the Couette flow utilized in the works \cite{BGM15I,BGM15II,BGM15III,BVW16,BMV14,WZ18} on the Navier-Stokes equations. In particular, the mixing induced by the Couette flow results in an improved (with respect to the heat equation) dissipation timescale for the $x$-dependent modes, which is referred to as \textit{enhanced dissipation}. A second stabilizing mechanism is the \textit{inviscid damping}, first discovered by Orr \cite{Orr07}, which causes decay on a timescale uniform in $\nu$. The linear analysis for the Navier-Stokes equations predicts an inviscid damping timescale of $\jap{t}^{-2}$, while in the MHD setting we have the difficulty that this is slowed to $\jap{t}^{-1}$ and is significantly harder to access (see Sec.~\ref{sec:linearinv}). To exploit the stabilizing properties at the nonlinear level we use the Fourier multiplier methods employed in \cite{BGM15I,BGM15II,BGM15III,BVW16,BMV14}. In this respect we follow most closely the ideas of \cite{BGM15I}. \par 

A few natural questions arise from Theorem~\ref{thrm:sum}. Firstly, it remains open whether or not the threshold estimate $\gamma \le 1$ holds in the case that $\sigma$ is either rational or violates (\ref{eq:dioph1}). It turns out however that for any $\sigma \in \R$ we can prove that $\gamma \le 4/3$ (see Corollary~\ref{cor:4/3}). Next, we expect that $\gamma < 1$ for Gevrey perturbations, which would be a significant result in that the lift-up effect suggests that no analogous result is possible for the Navier-Stokes equations. It is also of interest to consider domains with boundaries, and, as mentioned above, the physical case $\mu \neq \nu$ and the general double scaling limit suggested in (Q). 

\subsection{Notations and conventions} \label{outline:notations}
Given a vector $v = (v_j)_{j=1}^n$ we write $|v|$ to denote the $l^1$ norm. For $a \in \R$ we use the standard notation $\jap{a} = \sqrt{1 + a^2}.$ For two quantities $a$ and $b$ we write $a \les b$ to mean that there exists $C \ge 0$ such that $a \le Cb$. The constant $C$ may depend on the $N$, but is always independent of $\nu$, $\alpha$, $t_1$ and $t_2$ (both $t_1$ and $t_2$ are defined in Sec.~\ref{outline:bootstrap}). Sometimes we will write $a \les_{\beta} b$ if we want to emphasize that the implicit constant depends on some parameter $\beta$. All unlabeled integrals are assumed to be taken over $(x,y,z) \in \Torus \times \R \times \Torus$ and we use the shorthand notation $dV = dx\text{ }dy\text{ }dz$. \par 
Given a function $f:\Torus \times \R \times \Torus \to \R$ we define its Fourier transform $\hat{f}: \Z \times \R \times \Z \to \mathbb{C}$ by
$$\mathcal{F}(f) = \hat{f}(k,\eta, l) = \wsint{e^{-2\pi i(kx + \eta y + lz)}f(x,y,z)dV}.$$
The function $f$ is then recovered via the Fourier inversion formula 
$$f(x,y,z) = \sum_{k \in \Z} \int_{\eta \in \R}\sum_{l \in \Z} e^{2\pi i(kx + \eta y + lz)}\hat{f}(k,\eta,l)d\eta.$$
We denote the projection of $f$ onto the zero frequencies in $x$ by
$$
f_0 = \int_{\Torus}f(x,y,z)dx.
$$
Then, we write
$$\n{f} = f - f_0.$$
At times it will also be convenient to project onto the nonzero frequencies in $z$. For this, we use the alternate notation
$$\pz{f} = f - \int_{\Torus} f(x,y,z)dz.$$
For a general Fourier multiplier with symbol $m(k,\eta,l)$ we write $mf$ to denote $\mathcal{F}^{-1}(m(k,\eta,l)\hat{f})$, and we also adopt standard notations such as $|\del|$ for the multiplier with symbol $|k| + |\eta| + |l|$. Since $\sigma$ is fixed in the proof, we use, for any $a \in \R$, the shorthand notation
$$T_a^t = e^{a (\sigma \dx + \dz) t}$$
to denote the multiplier with symbol $e^{ia(\sigma k + l)t}$. We then write $\dt T_a^t$ to denote the Fourier multiplier with symbol $ia(\sigma k + l)e^{ia(\sigma k + l)t}$.
\par 
For $s \ge 0$ we define the Sobolev space $H^s$ using the norm 
$$\|f\|_{H^s} : = \|\jap{\del}^sf\|_{L^2},$$
where we adopt the shorthand $\jap{\del} = \jap{|\del|}$. For functions $f$ and $g$ we write the associated inner product as
$$\inp{f}{g}_{H^s} = \int \jap{\del}^s f \jap{\del}^s g \text{ }dV.$$  
For a function of space and time $f(t,x,y,z)$ defined on a time interval $(a,b)$ we define the Banach space $L^p(a,b;H^s)$ for $1 \le p \le \infty$ by the norm
$$\|f\|_{L^p(a,b;H^s)} = \|\|f\|_{H^s}\|_{L^p(a,b)}.$$
When the time interval is clear from context or mentioned explicitly elsewhere we use the shorthand notation $\|f\|_{L^p(a,b;H^s)} = \|f\|_{L^pH^s}$.

\section{Linear effects}
\label{intro:linearized}
Before proceeding to the specifics of our main theorem and its proof it is instructive to first discuss the linearization of (\ref{eq:origcoords}), which reads

\begin{equation} \label{eq:linear}
\begin{cases}
\dt u + y\dx u - \alpha \db b + \begin{pmatrix}u^2 \\ 0 \\ 0 \end{pmatrix} = 2\del \lap^{-1}\dx u^2 + \nu \lap u, \\ 
\dt b + y\dx b - \alpha \db u - \begin{pmatrix}b^2 \\ 0 \\ 0 \end{pmatrix} = \nu \lap b.
\end{cases}
\end{equation}
To study (\ref{eq:linear}) we make the natural coordinate transform that unwinds by the mixing of the Couette flow:
\begin{align*}
X &= x - yt, \\ 
Y & = y, \\
Z & = z.
\end{align*}
Denoting $B(t,X,Y,Z) = b(t,x,y,z)$ and $U(t,X,Y,Z) = u(t,x,y,z)$, (\ref{eq:linear}) then becomes
\begin{equation}
\begin{cases}
\dt U - \alpha \dB B + \begin{pmatrix} U^2 \\ 0 \\ 0 \end{pmatrix} = 2\del_L \lap_L^{-1} \dX U^2 + \nu \lap_L U, \\ 
\dt B - \alpha \dB U - \begin{pmatrix} B^2 \\ 0 \\ 0 \end{pmatrix} = \nu \lap_L B, \\
\end{cases}
\end{equation}
where $\del_L = (\dX^L,\dY^L,\dZ^L) = (\partial_X, \partial_Y - t \partial_X, \partial_Z)$, $\lap_L = \del_L \cdot \del_L$, and it is understood that $\db = \sigma \partial_X + \partial_Z$ when acting on functions in the new coordinates. In general, for a function $g(t,x,y,z)$ we will denote $G(t,X,Y,Z) = g(t,x,y,z)$.\par
Below we discuss the three linear effects that are crucial in the upcoming nonlinear analysis: the suppression of the lift-up effect due to the magnetic field, inviscid damping, and enhanced dissipation.

\subsection{Lift-up effect} \label{sec:linearlu}
We first recall the lift-up effect for the Navier-Stokes equations. For $\alpha = 0$ the velocity satisfies the system
\begin{equation} \label{eq:liftsys}
\dt u_0 + \begin{pmatrix} u^2_0 \\ 0 \\ 0 \end{pmatrix} = \nu \lap u_0,
\end{equation}
which we solve explicitly to obtain
\begin{align*}
u_0^1(t) &= e^{\nu t \lap}(u_0^1 (0) - t u_0^2 (0)), \\ 
u_0^2(t) &=  e^{\nu t \lap}u_0^2 (0), \\ 
u_0^3(t) &=  e^{\nu t \lap}u_0^3 (0).
\end{align*}
The lift-up effect refers to the linear in time growth of $u_0^1$ predicted by the formula above for $t \les \nu^{-1}$. In general, the best global estimate one can expect is
\begin{equation} \label{eq:lulinear}
\|u_0^1\|_{L^\infty H^s} + \nu^{1/2}\|\del u_0^1\|_{L^2 H^s} \les \nu^{-1}\|u_0(0)\|_{H^s}.
\end{equation}
\par 
Now we turn to the MHD case. The stabilizing effect of the magnetic field is easiest to see in the idealized equations, so we henceforth set $\nu = 0$ in this section. We introduce the Els\"{a}sser variables
\begin{equation} \label{eq:elsass1}
w^{\pm} = u \mp b,
\end{equation}
which represent waves that propagate along the direction of the background magnetic field. If we define the associated profiles (recall the definition $T_a^t = e^{at \db}$) 
\begin{equation} \label{eq:profiles1}
z^{\pm} = T_{\pm \alpha}^tw^{\pm},
\end{equation}
then we find that their projections onto the zero mode in $x$ solve the system
\begin{equation} \label{eq:lusystem}
\dt z_0^{\pm} + \begin{pmatrix} e^{\pm 2 \alpha t \dz} z_0^{\mp,2} \\ 0 \\ 0 \end{pmatrix} = 0,
\end{equation}
where we have noted that for any function $g$ we have $T_a^t g_0 = e^{a t \dz}g_0$.
While (\ref{eq:lusystem}) has a similar structure to (\ref{eq:liftsys}), the forcing term now exhibits oscillations that nullify the previously observed growth. By direct integration on the Fourier side we obtain the solution
\begin{align*}
\hat{z}^{\pm,1}(t,0,\eta,l) & = \hat{w}^{\pm,1}(0,0,\eta,l) - \frac{e^{\pm i\alpha lt}}{\alpha l} \sin(\alpha l t) \hat{w}^{\mp,2}(0,0,\eta,l), \\ 
\hat{z}^{\pm,2}(t,0,\eta,l) & = \hat{w}^{\pm,2}(0,0,\eta,l), \\ 
\hat{z}^{\pm,3}(t,0,\eta,l) & = \hat{w}^{\pm,3}(0,0,\eta,l),
\end{align*}
where we have noted that due to incompressibility we may assume that $l \neq 0$ in the time dependent piece of the formula for $z^{\pm,1}$. We then immediately obtain the estimate 
\begin{equation} \label{eq:nolulinear}
\|(u_0(t),b_0(t))\|_{H^s} \les \|(u_0(0),b_0(0))\|_{H^s} \quad \forall \text{ }s\ge 0,
\end{equation}
which is a tremendous gain over (\ref{eq:lulinear}).

\subsection{Diophantine approximation}
\label{outline:numbertheory}
To understand the inviscid damping we need facts about Diophantine approximation to quantify the possible losses incurred from integration by parts in time. For our purposes the following result, which is a consequence of Roth's theorem \cite{CasselNumber}, suffices. 
\begin{proposition} \label{prop:roth}
Let $t$ be an irrational algebraic number and fix any $r > 0$. Then, there exists a constant $C(t,r) > 0$ such that
\begin{equation} \label{eq:Roth}
    \left|t - \frac{p}{q}\right| > \frac{C}{|q|^{2+r}}
\end{equation}
for all rational $p/q$.
\end{proposition}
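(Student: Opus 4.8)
The plan is to deduce Proposition~\ref{prop:roth} from Roth's theorem by a short finiteness/bookkeeping argument; the substantive input is entirely contained in Roth's theorem, so the work is purely in packaging it as a uniform lower bound valid for \emph{all} rationals.

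First I would invoke Roth's theorem in the following form: since $t$ is an irrational algebraic number and $r > 0$ is fixed, there are only finitely many rationals $p/q$ written in lowest terms with $q \ge 1$ for which $|t - p/q| < |q|^{-2-r}$. Call this finite (possibly empty) set of exceptional fractions $\{p_1/q_1, \dots, p_M/q_M\}$. Because $t$ is irrational, $t \neq p_j/q_j$ for every $j$, so each quantity $|t - p_j/q_j|\,|q_j|^{2+r}$ is strictly positive. I would then set
\begin{equation*}
C = C(t,r) := \min\Bigl\{\, 1,\ \min_{1 \le j \le M} |t - p_j/q_j|\,|q_j|^{2+r} \,\Bigr\} > 0,
\end{equation*}
with the convention that the inner minimum is $1$ when $M = 0$.

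Next I would verify the claimed inequality for an arbitrary rational, first in lowest terms and then in general. If $p/q$ is in lowest terms with $q \ge 1$ and is \emph{not} one of the $p_j/q_j$, then by the defining property of the exceptional set $|t - p/q| \ge |q|^{-2-r} \ge C\,|q|^{-2-r}$, and the inequality is strict because $t$ is irrational. If $p/q$ \emph{is} some $p_j/q_j$, then $|t - p/q| = |t - p_j/q_j| \ge C\,|q_j|^{-2-r} = C\,|q|^{-2-r}$, again strictly since $t \neq p/q$. Finally, for a general fraction $p/q$ with $q \neq 0$, write it in lowest terms as $p'/q'$ with $1 \le |q'| \le |q|$; then $|t - p/q| = |t - p'/q'| > C\,|q'|^{-2-r} \ge C\,|q|^{-2-r}$, which is exactly~(\ref{eq:Roth}).

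There is no real obstacle here beyond two routine points that I would be careful to state explicitly: that irrationality of $t$ is what guarantees the finitely many exceptional terms contribute a \emph{positive} constant (and also upgrades all inequalities to strict ones), and that passing to lowest terms only decreases the denominator, so the bound for reduced fractions immediately implies the bound for all fractions. I would remark that one may equally cite the effective or ineffective version of Roth's theorem; only existence of the constant $C(t,r)$ is needed, not any control on its size.
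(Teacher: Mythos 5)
Your proposal is correct and follows the same route the paper takes implicitly: the paper offers no proof of Proposition~\ref{prop:roth} beyond citing Roth's theorem, and your finiteness/minimum argument is exactly the standard deduction that turns the ``finitely many exceptions'' form of Roth into a uniform constant, including the correct reductions to lowest terms and to $q \ge 1$. The only hair to split is that in the boundary case $|t - p/q| = |q|^{-2-r}$ irrationality of $t$ alone does not force strictness of (\ref{eq:Roth}); this is repaired by defining the exceptional set with ``$\le$'' or simply replacing $C$ by $C/2$.
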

\noindent From Proposition~\ref{prop:roth}, it follows that if $\sigma \in \R^+ \setminus \Q$ is irrational and algebraic then for any $r > 0$ there holds, for all $s \ge 0$,
\begin{equation}\label{eq:dioph}
\|\db^{-1}\n{g}\|_{H^s} \les_{\sigma,r} \|\n{g}\|_{H^{s + 1 + r}}.
\end{equation}
For $n=n(\sigma)$ and $c = c(\sigma)$ as defined below in Theorem~\ref{thrm:main} the inequality above reads
\begin{equation} \label{eq:dioph2}
    \|\partial_{\sigma}^{-1} \n{g}\|_{H^s} \le \frac{1}{c}\|\n{g}\|_{H^{s + n}},
\end{equation}
which for the sake of consistency of notation is the form that we will employ in all that follows. As we will see in Sec.~\ref{sec:linearinv}, (\ref{eq:dioph2}) says that for the nonzero mode in $x$ terms we can integrate by parts in time at the cost of a losing $n$ derivatives. 

\begin{remark} \label{rem:number}
It is interesting to note that the set of real numbers for which there exists an $r > 0$ such that (\ref{eq:Roth}) fails to hold for any constant $C > 0$ has Lebesgue measure zero. Moreover, by Louiville's theorem on Diophantine approximation any irrational number that is algebraic of order two (i.e., is the root of a second degree polynomial with integer coefficients) satisfies (\ref{eq:Roth}) with $r = 0$ and a constant $C$ that is easy to quantify \cite{CasselNumber}. This result is in some sense sharp due to Dirichlet's theorem, which states that for any irrational number $t$ there exist infinitely many rational $p/q$ that satisfy $|t- p/q| < 1/q^2$.
\end{remark}

\subsection{Inviscid damping} \label{sec:linearinv}
Following the ideas of \cite{Kelvin87,BGM15I} we define the unknowns
\begin{equation} \label{eq:profile2}
f^\pm = \lap z^\pm. 
\end{equation}
A computation using (\ref{eq:linear}) shows that in the ideal case $F^{\pm,2}$ solves
\begin{equation} \label{eq:inviscidphysical}
\dt F^{\pm,2} + \dXY^L \lap_{L}^{-1} F^{\pm,2} = T_{\pm 2\alpha}^t\dXY^L \lap_{L}^{-1} F^{\mp,2}. 
\end{equation}
By looking on the Fourier side, we see that the term on the left-hand side contributes to growth for $t > \eta/k$. On the other hand, since the profiles themselves are not oscillating, we expect, in a similar spirit to what was found above in Sec.~\ref{sec:linearlu}, that the right-hand side of (\ref{eq:inviscidphysical}) should have a negligible effect over long times. Hence, for now we drop this term and discuss the validity of this crucial simplification below in Sec.~\ref{sec:ibpt}. Integration on the Fourier side then yields, for $k\neq 0$, the approximate solution
\begin{equation} \label{eq:linlapsol}
\hat{F}^{\pm,2}(t,k,\eta,l) \cong \sqrt{\frac{k^2 + (\eta - kt)^2 + l^2}{k^2 + \eta^2 + l^2}} \hat{F}^{\pm,2}(0,k,\eta,l),
\end{equation}
which predicts a linear in time growth rate (compare with (\ref{eq:model6}) below). From (\ref{eq:linlapsol}), the relation $|\hat{F}^{\pm,2}(t,k,\eta,l)| = (k^2 + (\eta - kt)^2 + l^2)|\hat{W}^{\pm,2}(t,k,\eta,l)|$, and $|k,\eta-kt,l|^{-1} \les \jap{t}^{-1}|k,\eta,l|$ we derive the inviscid damping estimate
\begin{equation} \label{eq:lineardamping}
\|(\n{U}^2, \n{B}^2)\|_{H^s} \les \jap{t}^{-1}\|(u^2_\te{in},b^2_\te{in})\|_{H^{s+2}}.
\end{equation}
The loss of regularity in (\ref{eq:lineardamping}) is physically meaningful and corresponds to the transient unmixing of information from small scales to large scales by the Couette flow. In particular, for $\eta k > 0$ with $|\eta| \gg |k|$ the velocity and magnetic field undergo a transient amplification on the time interval $[0,\eta/k]$: 
\begin{equation} \label{eq:orrmech}
\left|\frac{\hat{W}^{\pm,2}(t = \eta/k,k,\eta,l)}{\hat{W}^{\pm,2}(0,k,\eta,l)}\right| \sim \left|\frac{\eta}{k}\right| = t. 
\end{equation}
The decay (\ref{eq:lineardamping}) and the transient growth (\ref{eq:orrmech}) are together known as the \textit{Orr mechanism}, and the times $t = \eta/k$ are referred to as the \textit{Orr critical times}. Note that in the case of 3D Navier-Stokes the linearized system predicts $\jap{t}^{-2}$ inviscid damping of $\n{U}^2$. This decay timescale should not be possible for (\ref{eq:inviscidphysical}). In fact, numerical solutions to (\ref{eq:inviscidphysical}) were observed to grow linearly in time, in agreement with the approximation (\ref{eq:linlapsol}). 

\subsubsection{Quadratic growth of $F^{\pm,1}$ and $F^{\pm,3}$}
After some calculations we find that $F^{\pm,1}$ and $F^{\pm,3}$ satisfy
\begin{equation} \label{inviscid13}
\dt F^{\pm,j} + 2\dXY^L \lap_{L}^{-1} F^{\pm,j} + \ind_{j=1} T_{\pm 2\alpha}^t F^{\mp,2} = \djj^L \lap_{L}^{-1} \dX F^{\pm,2} + T_{\pm 2\alpha}^t\djj^L \lap_{L}^{-1} \dX F^{\mp,2}. 
\end{equation}
It follows from the factor of $2$ in the term $2\dXY^L \lap_{L}^{-1} F^{\pm,j}$ that $F^{\pm,j}$, $j = 1,3$, will in general grow quadratically in time. Thus, inverting $\lap_L$ does not yield a decay estimate for the first or third component of $(\n{U},\n{B})$.
\subsubsection{Integration by parts in time} \label{sec:ibpt}
Now we return to the approximation that the oscillating term is a lower order contribution to (\ref{eq:inviscidphysical}). In particular, we want to show that its inclusion does not spoil the linear growth estimate (\ref{eq:linlapsol}) that yields (\ref{eq:lineardamping}). The main idea is to integrate by parts in time and use that the time derivatives of the profiles gain a factor $\jap{t}^{-1}$ of time decay in comparison to the profiles themselves. 
To demonstrate the key points we consider a model equation that captures the same growth and oscillation timescales of (\ref{eq:inviscidphysical}) but removes the frequency dependence. Specifically, we consider
\begin{equation}\label{eq:model}
 \dt F^{\pm} - \frac{1}{t} F^{\pm} = \frac{1}{t} T_{\pm 2 \alpha}^tF^{\mp}
\end{equation}
for $t \ge 1$ and $k \neq 0$, where for simplicity in this section we drop the second superscript. Noting that the left-hand side of (\ref{eq:model}) can be rewritten as $t\dt(t^{-1}F^{\pm})$, we obtain the a priori estimate
\begin{equation} \label{eq:model2}
\frac{1}{2}\|t^{-1}F^{\pm}(t)\|^2_{L^2} = \frac{1}{2}\|F^\pm(1)\|_{L^2}^2 + \int_1^t \int s^{-3} F^{\pm} T_{\pm 2\alpha}^s F^{\mp}dVds.
\end{equation}
Using (\ref{eq:identity}) we integrate by parts in time and use Plancherel's theorem to rewrite the oscillating contribution as
\begin{align}
    \Big|\int_1^t & \int s^{-3} F^{\pm} T_{2\alpha}^s F^{\mp}dVds\Big| = \Big|\int_1^t \int s^{-3} F^{\pm} \frac{1}{2\alpha} (\partial_s T_{2\alpha}^s) \db^{-1} F^{\mp}dVds\Big| \label{eq:model3}\\ 
    & \le \sum_{k,l \in \Z, k\neq 0}\int_{\eta \in \R}\int_1^t \frac{1}{2\alpha |\sigma k + l|}\left[s^{-1}|\partial_s (s^{-1} \hat{F}^{\pm})||s^{-1}\hat{F}^{\mp}|\right]d\eta ds \label{eq:model4}\\
    & + \text{symmetric terms } + \text{boundary terms}\nonumber,
\end{align}
where the symmetric terms correspond to the time derivative landing on the other two factors in the brackets above. From (\ref{eq:model}), it follows that (\ref{eq:model4}) and each of the symmetric terms gain one power of time decay in comparison to the left-hand side of (\ref{eq:model3}). Observe however that this gain costs a $|\sigma k + l|^{-1}$ factor, which, even for $\sigma \in \R \setminus \Q$, blows up as the oscillation frequency degenerates for $|k| \to \infty$. As mentioned above, the idea behind the Diophantine condition in Theorem~\ref{thrm:sum} is that we can absorb such losses by paying regularity. In fact, using (\ref{eq:dioph2}) and Cauchy-Schwarz it follows that for any $\theta > 0$ there holds
\begin{equation} \label{eq:model5}
(\ref{eq:model4}) \les_{\sigma,\theta}\frac{1}{\alpha}\|s^{-1} F^{\pm}\|_{L^\infty(1,t;L^2)}\|s^{-2 + \theta} F^{\mp}\|_{L^\infty(1,t;H^n)}.
\end{equation}
By estimating the other terms above in an similar fashion, one can check using a continuity argument that
\begin{equation} \label{eq:model6}
\|F^{+}(t)\|_{L^2} + \|F^{-}(t)\|_{L^2} \les t(\|F^{+}(1)\|_{H^n} + \|F^{-}(1)\|_{H^n})
\end{equation}
provided that $\alpha$ is sufficiently large and 
\begin{equation} \label{eq:model7}
    \|F^{\pm}(t)\|_{H^n} \les t^{2-\theta}(\|F^{+}(1)\|_{H^n} + \|F^-(1)\|_{H^n}).
\end{equation}
That is, a linear in time growth estimate that loses derivatives holds for (\ref{eq:model}) provided we have an estimate in a sufficiently higher norm that, while losing no regularity, allows for greater time growth. In particular, this analysis suggests that if (\ref{eq:model7}) is true, then the inviscid damping estimate (\ref{eq:lineardamping}) should follow provided we pay $n+2$ derivatives on the right-hand side. We conclude that uniform in $\nu$ time decay should be possible only if we pay regularity that depends on the choice of $\sigma$, and moreover that any optimal proof should require balancing estimates using varying time weights at multiple regularity levels. This observation is at the core of our proof of Theorem 1. 

\begin{remark}
In practice, especially at the nonlinear level, in the highest norm it is only feasible to show that (\ref{eq:model7}) holds with $\theta = 0$. This however does not pose any issue since one can simply iterate the argument above twice and pay $2n$ derivatives to close an estimate. On the other hand, dropping the frequency dependence of the coefficients in (\ref{eq:inviscidphysical}) is a substantial simplification. For the details of the argument above carried out on the complete nonlinear equations, see the estimates in Sec.~\ref{sec:intlso}.
\end{remark}
\begin{remark}
Proving an estimate like (\ref{eq:nolulinear}) for the nonlinear equations will also require integration by parts in time, but there will be no regularity losses since $1 \les |\sigma k + l|$ for $k = 0$, $l\neq 0$. See Sec.~\ref{sec:luhi} for the calculation.
\end{remark}

\subsubsection{Enhanced dissipation}
The modified Laplace operator $\lap_L$ leads to improved dissipation timescales. To see this, consider the model equation 
\begin{equation}
    \dt g = \nu\lap_L g,
\end{equation}
which on the Fourier side has solution
\begin{equation}
    \hat{g}(t,k,\eta,l) = \hat{g}(0,t,k,\eta,l)e^{-\nu \int_{0}^{t} (k^2 + (\eta - ks)^2 +l^2) ds}.
\end{equation}
Since $\int_{0}^{t} (k^2 + (\eta - ks)^2 + l^2) ds \ge k^2 t^3/12$, we obtain the estimate
\begin{equation}
    \|\n{g}(t)\|_{H^s} \le e^{-\nu t^3/12}\|\n{g}(0)\|_{H^s} \quad \forall \text{ }s\ge 0. 
\end{equation}
Hence, the nonzero modes decay on the timescale $t \sim \nu^{-1/3}$, which for $\nu \ll 1$ is an improvement on the $\nu^{-1}$ dissipation timescale of the usual heat equation.

\section{Statement of main results} \label{intro:result}
We are now ready to state our main result.

\begin{theorem} \label{thrm:main}
Let $\mu = \nu \in (0,1]$ and suppose that $\sigma \in \R^+ \setminus \mathbb{Q}$ is such that 
\begin{equation} \label{eq:dioph1}
    \inf_{p,q \in \Z}|q|^n\left|q\sigma - p\right| = c > 0
\end{equation} 
for some $n \ge 1$. Then, there exist universal constants $\delta$, $c_1 > 0$ such that for any $N \ge 11 + 3n$ there is a constant $c_0(N) > 0$ such that if $\alpha > c_1/c$ and
$$\|(u_\text{in},b_\text{in})\|_{H^{N+2}} = \epsilon \le c_0 \nu,$$
then the solution to (\ref{eq:origcoords}) is global in time and, denoting $N' = N - 4 - 2n$ and $N'' = N - 9 - 3n$, the profiles $U(t,X,Y,Z) = u(t,X+Yt,Y,Z)$ and $B(t,X,Y,Z) = b(t,X+Yt,Y,Z)$ satisfy the global estimates
\begin{subequations} \label{eq:thrm}
\begin{align}
\|e^{\delta \nu^{1/3}t} \lap_{X,Z}(\n{U}^2,\n{B}^2)\|_{L^\infty H^N} + \nu^{1/6}\|\lap_{X,Z}(\n{U}^2,\n{B}^2)\|_{L^2H^N} &\les \epsilon, \label{eq:thrmnon2}\\ 
\|(\n{U}^2,\n{B}^2)\|_{L^2H^{N'}} + \|\jap{t}\del_{X,Z}(\n{U}^2,\n{B}^2)\|_{L^\infty H^{N'-1}} & \les \epsilon, \label{eq:thrminviscid}\\
(j \in \{1,3\})\quad \|e^{\delta \nu^{1/3}t} \lap_{X,Z}(\n{U}^j,\n{B}^j)\|_{L^\infty H^{N''}} + \nu^{1/6}\|\lap_{X,Z}(\n{U}^j,\n{B}^j)\|_{L^2H^{N''}} &\les \epsilon, \label{eq:thrmnon1}\\ 
\|(\z{u},\z{b})\|_{L^\infty H^N} + \nu^{1/2}\|\del(\z{u},\z{b})\|_{L^2H^N} & \les \epsilon \label{eq:thrmzero},
\end{align}
\end{subequations}
where the implicit constants are independent of $\nu$, $N$, $n$, and $c$. 
\end{theorem}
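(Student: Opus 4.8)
The plan is to run a bootstrap (continuity) argument in the straightened coordinates $(X,Y,Z)=(x-yt,y,z)$ of Section~\ref{intro:linearized}, working with the Els\"{a}sser profiles $z^{\pm}=T^t_{\pm\alpha}w^{\pm}$, $w^{\pm}=u\mp b$, together with the inviscid-damping unknowns $f^{\pm,j}=\lap z^{\pm,j}$ and the zero-in-$x$ modes $(\z u,\z b)$. Since $T^t_{\pm\alpha}$ is a bounded Fourier multiplier, $H^s$ control of the $z^{\pm}$ is equivalent to $H^s$ control of $(U,B)$, so it suffices to prove the bounds~(\ref{eq:thrm}) for the $z^{\pm}$ unknowns. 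First I would derive from~(\ref{eq:origcoords}) the full nonlinear equations for $F^{\pm,j}$ and for $(\z u,\z b)$, isolating the three structural linear mechanisms identified in Section~\ref{intro:linearized} --- the mixing-induced growth $\dXY^L\lap_L^{-1}$ (with the factor $2$, hence quadratic in $t$, for $j\in\{1,3\}$), the Alfv\'{e}n oscillation coupling $T^t_{\pm2\alpha}(\cdots)F^{\mp}$, and the dissipation $\nu\lap_L$ --- plus the quadratic nonlinearity (transport, stretching, and the pressure $p^{\mathrm{NL}}$, all re-expressed through the Els\"{a}sser variables). The bootstrap hypotheses are the four families of bounds in~(\ref{eq:thrm}) with the implied constants enlarged, assumed on a maximal interval $[0,t_1]$; the goal is to recover them with the original constants, whence local well-posedness and continuity force $t_1=\infty$.

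The core of the proof is a family of weighted energy estimates built from time-dependent Fourier multipliers. For the enhanced-dissipation bounds~(\ref{eq:thrmnon2}) and~(\ref{eq:thrmnon1}) I would use an $O(1)$ multiplier whose logarithmic derivative upgrades $\nu\int_0^t(k^2+(\eta-ks)^2+l^2)\,ds$ to a clean $\nu^{1/3}t$ gain (following \cite{BGM15I}) and absorb against it the $t$ and $t^2$ growth produced by $\dXY^L\lap_L^{-1}$ on the nonzero-in-$x$ modes. For the inviscid damping~(\ref{eq:thrminviscid}) of the second components I would introduce an Orr-type multiplier whose logarithmic derivative dominates the growth of $\sqrt{k^2+(\eta-kt)^2+l^2}$ near the critical times $t=\eta/k$, turning the heuristic~(\ref{eq:linlapsol})--(\ref{eq:lineardamping}) into an energy identity. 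The oscillation coupling $T^t_{\pm2\alpha}(\cdots)F^{\mp}$ is then treated by integration by parts in time through~(\ref{eq:identity}): the time derivative is moved onto the slowly-varying profiles, which gain $\jap t^{-1}$ at the price of one factor $|\sigma k+l|^{-1}$, and by the Diophantine inequality~(\ref{eq:dioph2}) this costs $n$ derivatives; iterating it twice (cf.\ the first remark after~(\ref{eq:model7})) accounts for the explicit $2n$-derivative loss in $N'$, while the further loss in $N''$ reflects both a third such step and the dependence of the $j\in\{1,3\}$ equations on the already-damped second components. By contrast the zero modes~(\ref{eq:thrmzero}) are handled by the same integration by parts applied to the suppressed lift-up system~(\ref{eq:lusystem}) with \emph{no} regularity loss, since $|\sigma k+l|=|l|\ge 1$ when $k=0$; this is why $(\z u,\z b)$ is controlled at top regularity $H^N$ with no time growth, in sharp contrast to~(\ref{eq:lulinear}).

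For the nonlinearity I would decompose each quadratic interaction into high--low, low--high, and remainder paraproducts. The transport and remainder pieces follow by commuting derivatives and using elementary frequency bounds against the bootstrap hypotheses, together with the dissipative $L^2_t$ norms appearing in~(\ref{eq:thrmnon2}), (\ref{eq:thrmnon1}) and~(\ref{eq:thrmzero}); the delicate contributions are the reaction terms, in which a low-frequency factor multiplies a high-frequency factor sitting near its Orr critical time, where the Orr multiplier is not coercive --- there one exploits the extra $\jap t^{-1}$ decay of the low-frequency factor and, for the oscillatory-type nonlinear terms, a further integration by parts in time (cf.\ the second remark after~(\ref{eq:model7})). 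Every resulting error term should carry a factor $\epsilon/\nu$ (from the quadratic structure together with the $L^2_t$ dissipation) or a factor $1/\alpha$ (from integration by parts in time), so that choosing $c_0$ small and $\alpha>c_1/c$ large closes all of~(\ref{eq:thrm}) with the original constants and the continuity argument yields global existence.

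The step I expect to be hardest is the simultaneous balancing of the regularity hierarchy $N>N'>N''$ against the time weights. Because the inviscid damping here is only $\jap t^{-1}$ rather than the $\jap t^{-2}$ available in the Navier--Stokes analysis, absorbing the losses from repeated integration by parts in time with \emph{frequency-dependent} coefficients is borderline, and the time weights in the $H^N$, $H^{N'}$ and $H^{N''}$ energies must be arranged so that a top-regularity estimate that tolerates more $t$-growth but loses no derivatives can feed a lower-regularity estimate with less growth --- exactly the mechanism illustrated by~(\ref{eq:model6})--(\ref{eq:model7}). Getting the reaction terms in~(\ref{eq:thrminviscid}) to close with only this slow damping, rather than with the faster decay present in the hydrodynamic case, is where the argument is tightest.
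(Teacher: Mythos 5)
Your proposal follows essentially the same route as the paper: a bootstrap on the Els\"{a}sser profiles with weighted energy estimates built from the quadratic-growth and ghost multipliers, integration by parts in time via~(\ref{eq:identity}) with the Diophantine condition~(\ref{eq:dioph2}) absorbing the $|\sigma k+l|^{-1}$ losses (lossless on the zero modes), paraproduct decompositions for the nonlinearity, and a cascading regularity hierarchy to balance time growth against derivative losses. The only caveat is that the bootstrap quantities cannot literally be~(\ref{eq:thrm}) with enlarged constants --- the paper's scheme needs additional controls (e.g.\ $H^N$ bounds on $\lap_L(\n{U}^j,\n{B}^j)$, $j\in\{1,3\}$, that lose a factor $\nu^{-1/3}$, an intermediate norm between $H^N$ and $H^{N'}$, and a weight that keeps weakening after the critical times for the first components) --- but your closing paragraph shows you anticipate exactly this balancing.
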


\begin{remark}
The enhanced dissipation of the nonzero modes is described by the $e^{\delta \nu^{1/3}t}$ factors and the $\nu^{-1/6}$ scaling of the $L^2$ in time estimates in (\ref{eq:thrmnon2}) and (\ref{eq:thrmnon1}). Indeed, for $\nu \ll 1$ the $\nu^{-1/6}$ scaling is an improvement on the $\nu^{-1/2}$ scaling that holds for the heat equation. The inviscid damping is captured by the uniform in $\nu$ bounds in (\ref{eq:thrminviscid}). Notice in particular that the $\jap{t}^{-1}$ decay matches the optimal estimates predicted by the linear theory. The estimate (\ref{eq:thrmzero}) describes the suppression of the lift-up effect.
\end{remark}

\begin{remark}
The discussion in Remark~\ref{rem:number} implies that $n = 1$ is the minimal number satisfying (\ref{eq:dioph1}), and that for almost every $\sigma \in \R$ we may take $n = 1+r$ for any $r > 0$. Clearly then $n < 2$ is generic, however, the specific value of $n$ does not affect the structure of our proof, and so we take $n$ to be arbitrary to account for possibly exceptional circumstances.
\end{remark}

\begin{remark}
Given the result in \cite{WZ18}, it is reasonable to ask if some analog of Theorem~\ref{thrm:main} holds in low regularity if the initial data is taken as large as $\mathcal{O}(\nu^{\gamma})$ for some $\gamma < 1$ and the derivatives are measured in the original coordinates. From Sec.~\ref{sec:ibpt} we expect this to be a difficult problem since utilizing the inviscid damping, which should be key in any optimal proof, costs regularity beyond the usual Orr mechanism. It seems that studying the MHD stability problem in high regularity is most natural.
\end{remark}

In the case that $\sigma$ is arbitrary (possibly rational), the methods employed in the proof of Theorem~\ref{thrm:main} yield the following corollary. 

\begin{corollary} \label{cor:4/3}
Let $\mu = \nu \in (0,1]$, $\sigma \in \R$, and $\alpha_0$ be a sufficiently large universal constant. Then, for $\alpha > \alpha_0$ and any $N > 3/2$ we have $\gamma(X_i = H^{N+2},X_f = X^N) \le 4/3$.
\end{corollary}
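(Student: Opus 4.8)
The plan is to re-run the bootstrap argument behind Theorem~\ref{thrm:main}, but with the Diophantine input (\ref{eq:dioph2}) replaced by the trivial bound $|\sigma k + l|^{-1} \leq C\nu^{-1/3}$ that holds on the only frequency range where the $x$-dependent modes survive. The key observation is that enhanced dissipation already restricts all the nonzero-in-$x$ analysis to times $t \lesssim \nu^{-1/3}$ (and, after the Orr critical time, to frequencies with $|k| \lesssim |\eta| \lesssim \nu^{-1/3}|k|$, etc.), so on that regime one has $|\sigma k + l| \gtrsim$ nothing in general, but the $\db^{-1}$ multiplier applied in the integration-by-parts-in-time step (\ref{eq:model3})--(\ref{eq:model4}) can simply be bounded in $L^2 \to L^2$ operator norm by whatever cutoff is available. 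The cleanest route: insert a frequency cutoff $|\sigma k + l| \geq \nu^{1/3}$ versus $|\sigma k + l| < \nu^{1/3}$. On the first piece, $\|\db^{-1}\n{g}\|_{H^s} \leq \nu^{-1/3}\|\n{g}\|_{H^s}$ with \emph{no} loss of regularity (this replaces (\ref{eq:dioph2}), trading $n$ derivatives for a factor $\nu^{-1/3}$). On the second piece, for each fixed $k \neq 0$ the set of $l \in \Z$ with $|\sigma k + l| < \nu^{1/3}$ is essentially a single integer, and one does \emph{not} integrate by parts in time there; instead one absorbs that contribution directly — it is supported on a frequency set of bounded cardinality in $l$ and, crucially, enhanced dissipation kills these modes on the relevant timescale exactly as in Sec.~\ref{sec:linearlu}'s argument, so no growth mechanism needs to be unwound.

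Concretely, the steps I would carry out, in order. First, set up the same bootstrap hierarchy as in Theorem~\ref{thrm:main} — enhanced dissipation estimates (\ref{eq:thrmnon2}), (\ref{eq:thrmnon1}) for $\lap_{X,Z}(\n{U},\n{B})$, inviscid damping (\ref{eq:thrminviscid}) for the second components, and the lift-up/zero-mode estimate (\ref{eq:thrmzero}) — but now measuring derivatives in the \emph{original} coordinates, so $X_f = X^N$ is the coordinate $N$-norm rather than a profile norm (this is cheaper and is what lets one drop to $N > 3/2$). Second, redo the integration-by-parts-in-time estimates in Sec.~\ref{sec:intlso}: wherever the proof of Theorem~\ref{thrm:main} invokes (\ref{eq:dioph2}) and pays $n$ derivatives, now pay a factor $\nu^{-1/3}$ instead (on the $|\sigma k + l| \geq \nu^{1/3}$ piece) and handle the complementary thin frequency slab by enhanced dissipation. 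Third, track how the $\nu^{-1/3}$ losses propagate through the nonlinear energy estimates: each application costs $\nu^{-1/3}$, and the argument needs it applied a bounded number of times (twice, as in the remark after (\ref{eq:model7})), so the net effect is that the smallness requirement $\epsilon \lesssim \nu$ in Theorem~\ref{thrm:main} degrades to $\epsilon \lesssim \nu^{1 + 1/3} = \nu^{4/3}$. Fourth, verify that the large-$\alpha$ threshold $\alpha > c_1/c$ becomes a universal threshold $\alpha > \alpha_0$: the $1/c$ in Theorem~\ref{thrm:main} came precisely from the Diophantine constant in (\ref{eq:dioph2}), and replacing that step by the $\nu$-dependent bound removes the $c$-dependence entirely, leaving only the universal part $c_1$.

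The main obstacle I anticipate is the \emph{double} bookkeeping of the $\nu^{-1/3}$ loss against the time weights. In Theorem~\ref{thrm:main}, the Diophantine loss is $\nu$-independent, so it interacts cleanly with the time-weighted norms at multiple regularity levels (the "balancing at multiple regularity levels" emphasized after (\ref{eq:model7})); here the loss is itself a power of $\nu$, and one must make sure it is always paired with a gain that is at least as strong a power of $\nu$ — the gains come from enhanced dissipation ($\int_0^\infty e^{-\delta\nu^{1/3}s}\,ds \sim \nu^{-1/3}$ the wrong way, so one must instead use $\nu^{1/6}$ factors from the $L^2$-in-time norms, or the $\nu^{-1/3}$ restriction on the effective time horizon) so the product stays $O(1)$ after collecting at most a bounded number of such factors. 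Getting the exponent to come out at exactly $4/3$ rather than something worse requires that the inviscid-damping step be invoked with the trivial multiplier bound \emph{only once} per nonlinear interaction after the iteration; if it had to be invoked twice with the $\nu^{-1/3}$ loss each time, one would only get $\gamma \leq 5/3$. So the delicate point is arranging the iteration in Sec.~\ref{sec:intlso} so that the $\theta = 0$ versus $\theta > 0$ dichotomy still lets one close with a single net factor of $\nu^{-1/3}$; I expect this works for the same structural reason it works with the Diophantine loss, but it is where the proof has to be checked carefully. The enhanced-dissipation and zero-mode estimates themselves are unaffected — (\ref{eq:thrmzero}) in particular uses $1 \lesssim |\sigma k + l|$ for $k = 0$, $l \neq 0$, which holds for any $\sigma$ — so no new idea is needed there.
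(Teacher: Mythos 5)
Your route is genuinely different from the paper's, and it has two gaps. The paper's proof of Corollary~\ref{cor:4/3} abandons the inviscid damping and the intermediate-norm machinery of Sec.~\ref{sec:intlso} entirely: integration by parts in time is used \emph{only} for the zero-mode lift-up term, where $|\sigma k+l|=|l|\ge 1$ for every $\sigma$, so the bootstrap runs at a single regularity level and the degradation from $\epsilon\ll\nu$ to $\epsilon\ll\nu^{4/3}$ comes from replacing each use of the $\jap{t}^{-1}$ decay of $\del_{X,Z}\n{V}^2$ by the crude bound $\jap{t}\lam^{-1}\les\delta^{-1}\nu^{-1/3}$ (equivalently $m^{1/2}\ges\nu^{1/3}$). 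You instead try to keep the inviscid damping by splitting at $|\sigma k+l|=\nu^{1/3}$, and both halves of that split fail. On the slab $|\sigma k+l|<\nu^{1/3}$ (which for rational $\sigma$ contains infinitely many $(k,l)$ with $k\neq 0$ and $\sigma k+l=0$ exactly), the oscillating linear stretch term $T^t_{\pm2\alpha}\dXY^L\lap_L^{-1}F^{\mp,2}$ has no usable oscillation, and it cannot be absorbed into the Cauchy--Kovalevskaya terms because the pointwise bound on its symbol only gives a constant $\ge 1$. On an exactly resonant mode the linearized system (\ref{eq:reform2}) reduces to $\dt(F^{+,2}-F^{-,2})=-2\dXY^L\lap_L^{-1}(F^{+,2}-F^{-,2})$, i.e.\ the same quadratic-in-time growth as $F^{\pm,1}$ and $F^{\pm,3}$; enhanced dissipation does not prevent this, it only caps it at $t\sim\nu^{-1/3}$, allowing amplification by $\nu^{-2/3}$. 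So the claim that ``no growth mechanism needs to be unwound'' there is false, and the $\jap{t}^{-1}$ inviscid damping you keep in the bootstrap already fails at the linear level on those modes.

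The second gap is your step ``Fourth,'' which is backwards. On the piece $|\sigma k+l|\ge\nu^{1/3}$, integration by parts in time produces the prefactor $(2\alpha|\sigma k+l|)^{-1}\le\alpha^{-1}\nu^{-1/3}$ in front of terms that are quadratic in the energy but carry no additional factor of $\epsilon$ --- the boundary terms (\ref{int:boundary}) and the terms (\ref{int:dtmult}) are linear in the equation, so after Cauchy--Schwarz they are of the form $\alpha^{-1}\nu^{-1/3}\epsilon^2$ and must be absorbed into the quadratic left-hand side of the energy identity. That forces $\alpha\gg\nu^{-1/3}$, which contradicts the universal $\alpha_0$ in the statement, and there is no spare power of $\epsilon$ to trade against this loss (shrinking $\epsilon$ rescales both sides identically). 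Once you repair both problems --- accept quadratic ($m$-type rather than $m^{1/2}$-type) weights for $F^{\pm,2}$ near the resonant set and pay $\nu^{-1/3}$ wherever the inviscid damping was invoked --- your argument collapses into the paper's simpler single-level one. Your observation about the zero-mode estimate (\ref{eq:thrmzero}) is correct and matches the paper.
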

\noindent Notice that $\gamma \le 4/3$ is still an improvement on the threshold estimate of $\gamma \le 3/2$ in \cite{BGM15I}. The gain is possible because even with rational $\sigma$ the presence of the magnetic field allows us to eliminate the lift-up effect in the zero mode. The gap between the results in Corollary~\ref{cor:4/3} and Theorem~\ref{thrm:main} arises because we lose the inviscid damping when $\sigma$ does not satisfy (\ref{eq:dioph1}). The proof then does not require a calculation analogous to that in Sec.~\ref{sec:intlso}. In fact, it only requires integration by parts in time in the zero mode lift-up term, which does not cause a loss of derivatives. We thus only need to perform estimates at a single regularity level, and hence the proof of Corollary~\ref{cor:4/3} is much simpler than that of Theorem~\ref{thrm:main}.

\section{Preliminaries and outline of the proof} \label{sec:outline}

\subsection{Frequency decompositions} \label{sec:freqdecomp}   
Since we perform estimates at various regularity levels, Fourier space decompositions play an important role in the proof. For our purposes it suffices to define the sharp cutoff function $\chi: \R^6 \to \R$ by 
$$\chi(\xi \in \R^3,\xi'\in \R^3) = \begin{cases} 1 & \text{if }|\xi - \xi'| \le 2|\xi'|, \\ 
0 & \text{otherwise}. \end{cases}$$
We then define the paraproduct decomposition
\begin{align*}
fg &= \F^{-1} \sum_{k',l' \in \Z}\int_{\eta' \in \R} \hat{f}(k',\eta',l') \hat{g}(k-k',\eta - \eta',l-l') \chi(k,\eta,l,k',\eta',l')d\eta' \\
& \quad + \F^{-1} \sum_{k',l' \in \Z}\int_{\eta' \in \R} \hat{f}(k',\eta',l') \hat{g}(k-k',\eta - \eta',l-l') (1-\chi(k,\eta,l,k',\eta',l'))d\eta' \\ 
& \quad : = 
f^{\text{Hi}}g^{\text{Lo}} + f^{\text{Lo}}g^{\text{Hi}}.
\end{align*}
From Plancherel's theorem, the triangle inequality, Young's inequality, and Sobolev embedding we have, for any $s > 0$ and $\kappa > 3/2$, 
\begin{equation} \label{eq:productrule}
\|\HL{f}{g}\|_{H^s} \les_{\kappa} \|f\|_{H^s}\|g\|_{H^{\kappa}}.
\end{equation}

\subsection{Reformulation of the equations} \label{outline:reformulation}

\subsubsection{New dependent variables}
We work in the coordinate system defined in Sec.~\ref{intro:linearized} and primarily on the unknowns $F^{\pm,i}$. Recall the definitions (\ref{eq:elsass1}), (\ref{eq:profiles1}), (\ref{eq:profile2}), the shorthand $T_a^t = e^{at\dB}$, and our convention to use capital letters to denote unknowns in the new coordinates. The unknowns $F^{\pm}$ satisfy \begin{align} \label{eq:reform1}
\dt F^{\pm,1} & + T_{\pm 2\alpha}^t Z^{\mp}\cdot \del_L F^{\pm,1} + T_{\pm 2\alpha}^t F^{\mp}\cdot \del_L Z^{\pm,1} + 2T_{\pm 2\alpha}^t \di^L Z^{\mp,j} \dij^L Z^{\pm,1}\nonumber + 2\dXY^L \lap_{L}^{-1} F^{\pm,1} \\ & + T_{\pm 2\alpha}^tF^{\mp,2} 
- \partial_{XX} \lap_{L}^{-1}(F^{\pm,2} + T_{\pm 2\alpha}^tF^{\mp,2}) = \dX(T_{\pm 2\alpha}^t \djj^L Z^{\mp,i}\di^L Z^{\pm,j}) + \nu \lap_L F^{\pm,1},
\end{align}
\begin{align} \label{eq:reform2}
\dt F^{\pm,2} & + T_{\pm 2\alpha}^t Z^{\mp}\cdot \del_L F^{\pm,2} + T_{\pm 2\alpha}^t F^{\mp}\cdot \del_L Z^{\pm,2} + 2T_{\pm 2\alpha}^t \di^L Z^{\mp,j} \dij^L Z^{\pm,2} \nonumber\\ 
& + \dXY^L \lap_{L}^{-1} F^{\pm,2} - T_{\pm 2\alpha}^t\dXY^L \lap_L^{-1}F^{\mp,2} = \dY^L(T_{\pm 2\alpha}^t \djj^L Z^{\mp,i}\di^L Z^{\pm,j}) + \nu \lap_L F^{\pm,2},
\end{align}
\begin{align} \label{eq:reform3}
\dt F^{\pm,3} & + T_{\pm 2\alpha}^t Z^{\mp}\cdot \del_L F^{\pm,3} + T_{\pm 2\alpha}^t F^{\mp}\cdot \del_L Z^{\pm,3} + 2T_{\pm 2\alpha}^t \di^L Z^{\mp,j} \dij^L Z^{\pm,3} + 2\dXY^L \lap_{L}^{-1} F^{\pm,3} \nonumber\\
& - \partial_{XZ} \lap_{L}^{-1}(F^{\pm,2} + T_{\pm 2\alpha}^tF^{\mp,2}) = \dZ(T_{\pm 2\alpha}^t\djj^L Z^{\mp,i}\di^L Z^{\pm,j}) + \nu \lap_L F^{\pm,3},
\end{align}
where summation over repeated indices is implied,  $i,j \in \{1,2,3\}$ corresponds to $\{X,Y,Z\}$ in the derivative operators, and we have written $T_{\pm 2\alpha}^t f g$ to mean $(T_{\pm 2\alpha}^t f ) g$ in the nonlinear terms. At times we will also work on the unknowns $Q = \lap_L U$ and $H = \lap_L B$, and in particular the second components. These satisfy
\begin{align}\label{eq:Q2newcoord}
\dt Q^2 & + Q \cdot \del_L U^2 + U\cdot \del_L Q^2 - H \cdot \del_L B^2 - B\cdot \del_L H^2 + 2\di^L U^j \dij^L U^2 \nonumber \\
& - 2\di^L B^j \dij^L B^2 - \alpha \db H^2  = \nu \lap_L Q^2 + \partial^L_Y (\djj^L U^i \di^L U^j - \djj^L B^i \di^L B^j)
\end{align}
and 
\begin{align}\label{eq:H2newcoord}
\dt H^2 & + Q \cdot \del_L B^2 + U\cdot \del_L H^2 - H \cdot \del_L U^2 - B\cdot \del_L Q^2 \nonumber \\
& + 2\di^L U^j \dij^L B^2 - 2\di^L B^j \dij^L U^2 + 2\dXY^L \lap_L^{-1} H^2 - \alpha \db Q^2  = \nu \lap_L H^2. 
\end{align}
Lastly, for certain estimates we work directly on $Z_0^\pm$, which solves
\begin{equation} \label{eq:reform4}
\begin{cases}
\dt Z_0^{\pm,1} + (T_{\pm 2\alpha}^t Z^{\mp}\cdot \del_L Z^{\pm,1})_0 + T_{\pm 2\alpha}^t Z_0^{\mp,2} = \nu \lap Z^{\pm,1}_0 \\ 
\dt Z_0^{\pm,2} + (T_{\pm 2\alpha}^t Z^{\mp}\cdot \del_L Z^{\pm,2})_0 = \nu \lap Z_0^{\pm,2} + \dY\lap^{-1}\dij (T_{\pm 2\alpha}^tZ^{\mp,i} Z^{\pm,j})_0 \\ 
\dt Z_0^{\pm,3} + (T_{\pm 2\alpha}^t Z^{\mp}\cdot \del_L Z^{\pm,3})_0 = \nu \lap Z_0^{\pm,3} + \dZ\lap^{-1}\dij (T_{\pm 2\alpha}^tZ^{\mp,i} Z^{\pm,j})_0.
\end{cases}
\end{equation}
\begin{remark} \label{remark:spacetimeres}
Observe the remarkable structure in (\ref{eq:reform1}) -- (\ref{eq:reform3}) and~(\ref{eq:reform4}) that the ``$+$'' variables never interact nonlinearly with the ``$-$'' variables. Physically speaking, all nonlinear interactions are between wavepackets transported in opposite directions along the magnetic field lines. On $\R^3$, this amounts to a dispersive effect whereby the waves themselves are not decaying (at least in the ideal case), but nevertheless the nonlinear terms decay as the interacting wavepackets separate in space \cite{He2017}. In the language of the spacetime resonance method for nonlinear wave equations (see, e.g., \cite{GermainSpacetime,GermainKlein,GNEulMax}), this structure means that the nonlinearity is \textit{space non-resonant} uniformly in frequency on $\R^3$. For our periodic setting, the effect of the relative transport is to provide time oscillations in all nonlinear interactions where the function containing $T_{\pm 2\alpha}^t$ has a nonzero $X$ or $Z$ frequency. For such interactions it is possible to integrate by parts in time, however, we do not know how to use this structure to obtain $\gamma < 1$ because the regularity losses discussed in Sec.~\ref{sec:ibpt} limit the possible gain in the high norm estimates.
\end{remark}

\subsubsection{Shorthands}
It will be useful to define some shorthands for the various terms above. For concreteness we will only discuss the terms in the form they appear in (\ref{eq:reform1}) -- (\ref{eq:reform3}) for the ``$+$'' variables. For the linear terms in the $F^{+,2}$ equation:
\begin{align*}
& \te{LS} = -\dXY^L \lap_{L}^{-1}F^{+,2} \hspace{0.98in} \te{(``linear stretch'')}, \\ 
& \te{OLS} = T_{2\alpha}^t\dXY^L \lap_{L}^{-1}F^{-,2} \hspace{0.75in} \te{(``oscillating linear stretch'')}.
\end{align*}
For the the linear terms in the $F^{+,\beta}$ equation, $\beta \in \{1,3\}$:
\begin{align*}
& \te{LU} = -T_{2\alpha}^t F^{-,2} \hspace{0.96in} (\te{``lift-up''}),\\ 
& \te{LS} = -2\dXY^L \lap_{L}^{-1}F^{+,\beta} \hspace{0.56in} (\te{``linear stretch''}),\\ 
& \te{LP1} = \partial_{X\beta}^L\lap_{L}^{-1}F^{+,2} \hspace{0.70in} (\te{``linear pressure''}),\\ 
& \te{LP2} = T_{2\alpha}^t \partial_{X\beta}^L\lap_{L}^{-1}F^{-,2} \hspace{0.48in} (\te{``linear pressure''}).
\end{align*}
Now we turn to the nonlinear terms for $F^{+,\beta}$, $\beta \in \{1,2,3\}$. In what follows, $i,j \in \{1,2,3\}$ and $s_1$, $s_2$ can be 0 or $\neq$. We denote the four types of terms by
\begin{align*}
& \te{NLT}(j,s_1,s_2) = -T_{2\alpha}^t Z_{s_1}^{-,j} \djj^L F^{+,\beta}_{s_2} \hspace{0.94in} (\te{``nonlinear transport''}),  \\ 
& \te{NLS1}(j,s_1,s_2) = -T_{2\alpha}^t F_{s_1}^{-,j} \djj^L Z^{+,\beta}_{s_2} \hspace{0.88in} (\te{``nonlinear stretch''}),\\ 
& \te{NLS2}(i,j,s_1,s_2) = -T_{2\alpha}^t\di Z^{-,j}_{s_1} \dij^L Z^{+,\beta}_{s_2} \hspace{0.64in} (\te{``nonlinear stretch''}),\\ 
& \te{NLP}(i,j,s_1,s_2) = \partial_{\beta}(T_{2\alpha}^t \djj^L Z_{s_1}^{-,i} \di^L Z_{s_2}^{+,j}) \hspace{0.53in} (\te{``nonlinear pressure''}).
\end{align*}
The generalization of these shorthands to the other equations is mostly clear, except for perhaps in the equations for $Q^2$ and $H^2$ since they have additional nonlinear terms. In this case we simply denote indifferently, for example, $U \cdot \del_L H^2$ and $B \cdot \del_L Q^2$ as nonlinear transport terms in the equation for $H^2$. This will not cause any confusion in the proof. We use superscripts $\te{HL}$ and $\te{LH}$ to denote the two pieces of a term corresponding to the paraproduct decomposition defined in Sec.~\ref{sec:freqdecomp}. We will also abuse notation slightly and use the same shorthands above to denote a term's contribution to an energy estimate. For example, in an $H^s$ energy estimate for $F^{+,\beta}$ we write one of the contributions from the nonlinear transport term as
$$\te{NLT}^{\te{HL}}(j,s_1,s_2) = -\int_{t_1}^{t_2} \int \jap{\del}^s F^{+,\beta} \jap{\del}^s(T_{2\alpha}^t Z_{s_1}^{-,j})^\te{Hi} (\djj F^{+,\beta}_{s_2})^\te{Lo}dVdt.$$
When we do not indicate $s_1$, $s_2$, or $j$ in the nonlinear shorthands we simply mean the term without any restrictions on the indices or frequency interactions. For example, we write $\te{NLT}(j) = -T_{2\alpha}^t Z^{-,j}\djj^L F^{+,\beta}$ and $\te{NLT} = -T_{2\alpha}^t Z^{-} \cdot \del_L F^{+,\beta}$.

\subsection{Fourier multiplier norm} \label{outline:Fouriernorm}
Inspired by the previous works \cite{BGM15I,BVW16,BGM15II,BGM15III}, our proof is based on energy estimates using weighted norms defined through Fourier multipliers. The multipliers that we employ have all, up to small modifications, been previously used in \cite{BGM15I}.

\subsubsection{Quadratic growth multipliers $m$ and $\til{m}$} \label{Fouriernorm:stretch}
The first class of multipliers we use are concerned with the natural quadratic in time growth that $F^{\pm,1}$ and $F^{\pm,3}$ experience, as well as the linear growth of $F^{\pm,2}$. Consider the model scalar equation 
\begin{equation} \label{eq:stretch}
\dt g + 2\dXY^L \lap_{L}^{-1}g = \nu \lap_L g.
\end{equation}
On the Fourier side this equation becomes 
\begin{equation} \label{eq:stretchfourier}
\dt \hat{g} + \frac{2 k (\eta - kt)}{k^2 + (\eta - kt)^2 + l^2}\hat{g} = -\nu(k^2 + (\eta - kt)^2 + l^2) \hat{g}.
\end{equation}
For $k \neq 0$ the term $\frac{2k(\eta - kt)}{k^2 + (\eta - kt)^2 + l^2} \hat{g}$ contributes to growth in $\hat{g}$ for $t \ge \eta / k$. On the other hand, for $k \neq 0$ the term on the right-hand side yields enhanced dissipation, which will overcome the growth for  $|t - \eta/k|$ sufficiently large with respect to some inverse power of $\nu$. In fact, one can check that for $k \neq 0$ there holds 
$$ \left|\frac{2k(\eta - kt)}{k^2 + (\eta - kt)^2 + l^2}\right| \le \frac{\nu}{32} (k^2 + (\eta - kt) + l^2) $$
whenever $|t- \eta/k| \ge 4 \nu^{-1/3}$. This motivates defining the multiplier $m$ by $m(0,k,\eta,l) = 1$ and the ODE
\begin{equation} \label{eq:m}
\frac{\dot{m}}{m} 
=
\begin{cases}
\frac{2k(\eta - kt)}{k^2 + (\eta - kt)^2 + l^2} & \te{ if }0\le t - \eta/k \le 4\nu^{-1/3}, \\ 
0 & \te{ otherwise}.
\end{cases}
\end{equation}
For certain unknowns it will also be useful to use a norm that weakens for each frequency indefinitely after the critical time. We thus define the modified multiplier $\til{m}$ by 
\begin{equation} \label{eq:mtil}
\frac{\dot{\til{m}}}{\til{m}} 
=
\begin{cases}
\frac{2k(\eta - kt)}{k^2 + (\eta - kt)^2 + l^2} & \te{ if }t \ge \eta/k, \\ 
0 & \te{ otherwise}.
\end{cases}
\end{equation}
From (\ref{eq:m}) and (\ref{eq:mtil}) we find that $m$ and $\til{m}$ are given by the exact formulas 
\begin{itemize}
\item $k = 0$: $m(t,0,\eta,l) = \til{m}(t,0,\eta,l) = 1$;
\item $k \neq 0$, $\eta k < 0$ and $|\eta| \ge 4\nu^{-1/3}|k|$:
$$m(t,k,\eta,l) = 1, \quad \til{m}(t,k,\eta,l) = \frac{k^2 + \eta^2 + l^2}{k^2 + (\eta - kt)^2 + l^2};$$
\item $k \neq 0$, $\eta k < 0$ and $|\eta| \le 4\nu^{-1/3}|k|$:
\begin{align*}
    & m(t,k,\eta,l) = \begin{cases} \frac{k^2 + \eta^2 + l^2}{k^2 + (\eta - kt)^2 + l^2} & \te{ if } t \in [0, \eta/k + 4\nu^{-1/3}), \\ \frac{k^2 + \eta^2 + l^2}{k^2 + (4k\nu^{-1/3})^2 + l^2} & \te{ otherwise }, \end{cases} \\
    & \til{m}(t,k,\eta,l) = \frac{k^2 + \eta^2 + l^2}{k^2 + (\eta - kt)^2 + l^2};
    \end{align*}
\item $k \neq 0$ and $\eta k > 0$: 
$$m(t,k,\eta,l) = \begin{cases}
1 & \te{ if }t \le \eta/k, \\ 
\frac{k^2 + l^2}{k^2 + (\eta - kt)^2 + l^2} & \te{ if }t\in (\eta/k,\eta/k + 4\nu^{-1/3}),  \\ 
\frac{k^2 + l^2}{k^2 + (4k\nu^{-1/3})^2 + l^2} & \te{ if } t \ge \eta/k + 4\nu^{-1/3};
\end{cases} 
$$
$$
\hspace{-2.9cm} \til{m}(t,k,\eta,l) = \begin{cases}
1 & \te{ if }t \le \eta/k, \\ 
\frac{k^2 + l^2}{k^2 + (\eta - kt)^2 + l^2} & \te{ if }t > \eta/k.
\end{cases}
$$
\end{itemize}
The natural multiplier to use in the norm for $F^{\pm,2}$, which is expected to grow linearly in time, is $m^{1/2}$. While it can be obtained from the formulas above, it is useful to know that it satisfies 
\begin{equation} \label{eq:mhalf}
\frac{\dot{m}^{1/2}}{m^{1/2}} = 
\begin{cases}
\frac{k(\eta - kt)}{k^2 + (\eta - kt)^2 + l^2} & \te{ if } 0 \le t - \eta/k \le 4\nu^{-1/3}, \\ 
0 & \te{ otherwise}.
\end{cases}
\end{equation}
The fundamental properties of $m$ and $\til{m}$ are summarized in the following lemma.

\begin{lemma}\label{lemma:stretch}
The multipliers $m$ and $\til{m}$ satisfy
\begin{subequations}
\begin{align}
& \til{m}(t,k,\eta,l) \le m(t,k,\eta,l) \le 1, \label{mle1}\\ 
& k^2 + l^2 \les (k^2 + (\eta - kt)^2 + l^2) \til{m}, \label{mlapl}\\ 
& \nu^{2/3} \les m(t,k,\eta,l), \label{mtrivcom}\\
& \frac{1}{\til{m}} + \frac{1}{m} \les \jap{t}^2, \label{mtrivcomt}\\ 
& \frac{\til{m}(t,k,\eta,l)}{\til{m}(t,k,\eta',l')} + \frac{m(t,k,\eta,l)}{m(t,k,\eta',l')} \les \jap{\eta - \eta'}^2 + \jap{l - l'}^2, \label{mcom}\\ 
& \til{m}(t,k,\eta,l) \les \frac{|k,\eta,l|^4}{\jap{t}^2}. \label{mtildecay} 
\end{align}
\end{subequations}
\end{lemma}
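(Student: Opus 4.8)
The plan is to reduce everything to elementary pointwise inequalities by working with the explicit closed forms of $m$ and $\til{m}$ recorded just before the statement. These arise from integrating the defining ODEs (\ref{eq:m}) and (\ref{eq:mtil}) via $\frac{2k(\eta - ks)}{k^2 + (\eta - ks)^2 + l^2} = -\frac{d}{ds}\log\bigl(k^2 + (\eta - ks)^2 + l^2\bigr)$: each value of $m$ and of $\til{m}$ is either $1$ or a ratio $\frac{k^2 + (\eta - ka)^2 + l^2}{k^2 + (\eta - kb)^2 + l^2}$, where $[a,b]$ is the (possibly empty) intersection of $[\eta/k, \eta/k + 4\nu^{-1/3}]$ (resp. $[\eta/k,\infty)$) with $[0,t]$. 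I would then verify (\ref{mle1})--(\ref{mtildecay}) on the natural regions $\{k = 0\}$, $\{k \neq 0,\ \eta k \le 0\}$ (split according to whether $|\eta| \le 4\nu^{-1/3}|k|$), and $\{k \neq 0,\ \eta k > 0\}$ (split according to the position of $t$ relative to $\eta/k$ and $\eta/k + 4\nu^{-1/3}$). Two facts get used throughout: on $\eta k \le 0$ one has $|\eta - kt| = |\eta| + |k|t$, while on $\eta k > 0$ with $t \ge \eta/k$ one has $|\eta - kt| = |k|(t - \eta/k)$; and replacing the denominator by its frozen value at time $\eta/k + 4\nu^{-1/3}$ (the only difference between $m$ and $\til{m}$) can only decrease the multiplier.

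With the formulas in hand, (\ref{mle1}), i.e. $\til{m} \le m \le 1$, follows by comparing numerators and denominators region by region, and (\ref{mlapl}) is immediate since the numerator of $\til{m}$ always dominates $k^2 + l^2$, so $\bigl(k^2 + (\eta - kt)^2 + l^2\bigr)\til{m} \ges k^2 + l^2$. For (\ref{mtrivcom}), the only place where $m$ can be small is a frozen region, where the denominator $k^2 + (4k\nu^{-1/3})^2 + l^2 \le 17\nu^{-2/3}(k^2 + l^2)$ (using $\nu \le 1$) is at most $\nu^{-2/3}$ times the numerator $k^2 + l^2$, so $m \ges \nu^{2/3}$; elsewhere $m$ is bounded below by a universal constant. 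For (\ref{mtrivcomt}) it suffices, by (\ref{mle1}), to bound $1/\til{m}$; using $(\eta - kt)^2 \le 2\eta^2 + 2k^2 t^2$ on $\eta k \le 0$ and $(\eta - kt)^2 = k^2(t - \eta/k)^2 \le k^2 t^2$ on $\eta k > 0$, $t \ge \eta/k$, together with $t^2 \le \jap{t}^2$, one reads off $1/\til{m} \les \jap{t}^2$ from the explicit formulas.

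The estimate demanding the most care is the commutator bound (\ref{mcom}), because the two frequencies $(\eta,l)$ and $(\eta',l')$ (which share the same $k$) may lie in different pieces of the case decomposition, so that the exponents $a,b$ attached to them differ. I would first reduce to proving $m(t,k,\eta,l) \les \bigl(\jap{\eta - \eta'}^2 + \jap{l - l'}^2\bigr) m(t,k,\eta',l')$ and the analogous bound for $\til{m}$, then control the numerator ratio with $(\eta - kt)^2 \le 2(\eta' - kt)^2 + 2(\eta - \eta')^2$ and $l^2 \le 2l'^2 + 2(l - l')^2$, and the denominator ratio with the same square-sum inequalities after a sub-case split on the relative position of $b$ and $b'$; the $l$-dependence enters only through additive $l^2$ terms, so the $\jap{l - l'}^2$ factor appears in the same way. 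The genuinely delicate configuration is the cross-region one in which, say, $m(t,k,\eta,l) = 1$ while $m(t,k,\eta',l')$ is small: there one uses that $m(t,k,\eta',l') \les \kappa$ forces a shift $(\eta' - kb')^2 \ges \kappa^{-1}(k^2 + l'^2)$, and since $(\eta,l)$ lies in a region where that shift is absent, the separation $\jap{\eta - \eta'}^2$ (or, in a frozen region, the critical-time gap, which is then $\ges \nu^{-1/3}$) must absorb the loss. Finally (\ref{mtildecay}) follows, for $k \neq 0$ (recall $\til{m} \le 1$ always), from the explicit formula and the sign of $\eta - kt$: if $t > 2\eta/k$ --- which always holds when $\eta k \le 0$ --- then $|\eta - kt| \ges t$, so the denominator of $\til{m}$ is $\ges t^2$ and $\til{m} \les |k,\eta,l|^2\jap{t}^{-2} \le |k,\eta,l|^4\jap{t}^{-2}$; whereas if $t \le 2\eta/k$ then $t \les |\eta| \les |k,\eta,l|$, so $\jap{t}^2 \les |k,\eta,l|^4$ and $\til{m} \le 1$ already gives the claim.

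I expect the only real obstacle to be organizational: keeping track of the sign-and-region cases in (\ref{mcom}) together with the frozen/non-frozen and original/shifted-critical-time bookkeeping. Each individual inequality is elementary; the lemma is in essence the verification that the multipliers defined by (\ref{eq:m})--(\ref{eq:mtil}) behave exactly as the scalar model (\ref{eq:stretchfourier}) predicts, with all constants independent of $\nu$.
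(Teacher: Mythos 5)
Your proposal is correct and coincides with the paper's (largely omitted) proof: the paper declares everything except (\ref{mcom}) to be immediate from the exact formulas and cites \cite{BGM15I} for (\ref{mcom}), and your region-by-region verification --- including the cross-region analysis of (\ref{mcom}) in which either the separation $\jap{\eta-\eta'}$ or the $4\nu^{-1/3}$ critical-time gap absorbs the loss --- is exactly that argument. One small slip: in the non-frozen decaying window $m$ is \emph{not} bounded below by a universal constant but already dips toward $\nu^{2/3}$; the bound (\ref{mtrivcom}) still holds there because $|\eta-kt|\le 4|k|\nu^{-1/3}$ throughout that window (equivalently, because $m$ is nonincreasing past the critical time and hence exceeds its frozen value).
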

\noindent Except for (\ref{mcom}), the proof of Lemma~\ref{lemma:stretch} is essentially immediate from the exact formulas above. Inequality (\ref{mcom}) was proven for $m$ in \cite{BGM15I} and the proof for $\til{m}$ does not require any notable variations. Thus, we omit it for the sake of brevity. In the proof of Theorem~\ref{thrm:main} we will use (\ref{mle1}) -- (\ref{mtrivcomt}) so frequently that we will usually do so without any remark.

\subsubsection{Ghost multiplier $M$} \label{Fouriernorm:ghost}
We also introduce three additional multipliers $M_1$, $M_2$, and $M_3$. These multipliers are defined by $M_j(0,k,\eta,l) = 1$, $M_j(t,0,\eta,l) = 1$, and for $k \neq 0$ the differential equations
\begin{subequations}
\begin{align}
- \frac{\dot{M_1}}{M_1} &= \frac{k^2}{k^2 + l^2 + (\eta - kt)^2}, \label{eq:ghost1}\\
- \frac{\dot{M_2}}{M_2} &= \frac{\jap{kl}}{k^2 + l^2 + (\eta - kt)^2}, \label{eq:ghost2} \\ 
- \frac{\dot{M_3}}{M_3} &= 
\frac{\nu^{1/3} k^2}{k^2 + l^2 + \nu^{2/3}(\eta - kt)^2} \label{eq:ghostenh}.
\end{align}
\end{subequations}
We then define $M = M_1M_2M_3$ and observe that it satisfies 
$$-\frac{\dot{M}}{M} \ge -\frac{\dot{M}_j}{M_j}$$
for each $j \in \{1,2,3\}$. It follows readily by direct integration that there exists a universal constant $c_2 > 0$ such that 
\begin{equation} \label{eq:ghost}
c_2 \le M^j(t,k,\eta,l) \le 1.
\end{equation}
From (\ref{eq:ghost}), we see that the multiplier $M$ is essentially a Fourier side analogue of Alinhac's ghost energy method for quasilinear wave equations \cite{Alinhac01}, which is the origin of the terminology ``ghost multiplier.'' The multipliers $M_1$ and $M_2$ are used to quantify the inviscid damping with time integrated estimates that do not lose regularity; see for example the first term in (\ref{eq:thrminviscid}) and compare with the pointwise estimate (\ref{eq:lineardamping}). Moreover, they are useful to control terms arising from the linear pressure. The multiplier $M_3$ is designed to balance the transient slow down of the enhanced dissipation that occurs near the critical times. This is quantified by the following lemma.
\begin{lemma}\label{ghostenhanced}
There exists a universal constant $c_3 > 0$ such that for $k \neq 0$ there holds
$$ c_3 \nu^{1/6} \le \nu^{1/2}|k,\eta - kt,l| + \sqrt{-\dot{M_3}M_3}.$$
\end{lemma}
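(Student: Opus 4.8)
The plan is to establish the inequality $c_3\nu^{1/6} \le \nu^{1/2}|k,\eta-kt,l| + \sqrt{-\dot M_3 M_3}$ by a case split on the size of $|\eta - kt|$ relative to the natural enhanced-dissipation length scale $\nu^{-1/3}|k|$, using the explicit form of $-\dot M_3/M_3$ in (\ref{eq:ghostenh}) together with the lower bound $M_3 \ge c_2$ from (\ref{eq:ghost}). First I would record that, since $k \neq 0$ and everything is scale-invariant in an appropriate sense, it suffices to bound from below the quantity $\nu^{1/2}|k,\eta-kt,l|^2 \cdot |k,\eta-kt,l|^{-1} + \sqrt{-\dot M_3 M_3}$; more precisely I would use $|k,\eta-kt,l| \ge |k| \ge 1$ to handle the ``large frequency'' regime trivially, and focus on where $|k,\eta-kt,l|$ is comparable to the degenerate scale.

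The core computation is the following. From (\ref{eq:ghostenh}) and $M_3 \ge c_2$ we have
\begin{equation*}
\sqrt{-\dot M_3 M_3} \ge \sqrt{c_2}\,\frac{\nu^{1/6}|k|}{\left(k^2 + l^2 + \nu^{2/3}(\eta-kt)^2\right)^{1/2}}.
\end{equation*}
Set $A = \nu^{1/2}|k,\eta-kt,l|$ and $B = \sqrt{-\dot M_3 M_3}$. \textbf{Case 1: $|\eta - kt| \le \nu^{-1/3}|k,l|$.} Then $\nu^{2/3}(\eta-kt)^2 \le k^2 + l^2$, so the denominator above is $\les |k,l| \les |k,\eta-kt,l|$ (using $|\eta-kt| \les \nu^{-1/3}|k,l| \les \nu^{-1/3}|k,\eta-kt,l|$, hence $\nu^{1/3}|\eta-kt| \les |k,l|$ — one should be slightly careful here and instead just note $k^2+l^2+\nu^{2/3}(\eta-kt)^2 \le 2(k^2+l^2) \le 2|k,\eta-kt,l|^2$ directly). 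Thus $B \ges \nu^{1/6}|k|/|k,\eta-kt,l| \ges \nu^{1/6}/|k,\eta-kt,l|$. Combining, $A + B \ges \nu^{1/2}|k,\eta-kt,l| + \nu^{1/6}/|k,\eta-kt,l|$, and by AM-GM the right side is $\ges \nu^{1/3}\sqrt{\nu^{1/2}\cdot\nu^{1/6}}$... rather, $\ges 2(\nu^{1/2}\cdot\nu^{1/6})^{1/2} = 2\nu^{1/3}$, which is $\ges \nu^{1/6}$ for $\nu \le 1$; so actually in this case one gets the stronger bound $\nu^{1/3}$. \textbf{Case 2: $|\eta - kt| \ge \nu^{-1/3}|k,l|$.} Then $A = \nu^{1/2}|k,\eta-kt,l| \ges \nu^{1/2}|\eta-kt| \ges \nu^{1/2}\cdot\nu^{-1/3}|k| = \nu^{1/6}|k| \ges \nu^{1/6}$. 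So in either case $A + B \ges \nu^{1/6}$ with a universal constant $c_3$, which is what is claimed.

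The main (and really only) obstacle is bookkeeping in Case 1: one must verify that the denominator $k^2 + l^2 + \nu^{2/3}(\eta-kt)^2$ in the expression for $-\dot M_3/M_3$ is genuinely controlled by $|k,\eta-kt,l|^2$ in the regime where it is not obviously small, and then run the AM-GM step with the correct exponents so that the two terms balance at the scale $\nu^{1/3} \ge \nu^{1/6}$. A clean way to avoid any fussiness is to observe that $k^2 + l^2 + \nu^{2/3}(\eta-kt)^2 \le k^2 + l^2 + (\eta-kt)^2 = |k,\eta-kt,l|^2$ whenever $\nu \le 1$, which gives unconditionally $B \ges \nu^{1/6}|k|/|k,\eta-kt,l|$, and then a single application of AM-GM to $A + B \ges \nu^{1/2}|k,\eta-kt,l| + \nu^{1/6}|k|/|k,\eta-kt,l| \ges \nu^{1/3}|k|^{1/2} \ges \nu^{1/3} \ge \nu^{1/6}$ finishes the proof in one stroke with no case analysis at all. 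I would present this streamlined version, noting the universal constant $c_3$ can be taken to be, e.g., $\sqrt{c_2}$ up to an absolute factor.
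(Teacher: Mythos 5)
Your argument breaks at the final step, and the error is not cosmetic. For $\nu\in(0,1]$ the inequality ``$\nu^{1/3}\ge\nu^{1/6}$'' is backwards: a smaller exponent on a number in $(0,1]$ gives a \emph{larger} value, so $\nu^{1/3}\le\nu^{1/6}$. Your AM--GM step, in both the streamlined version and Case 1, produces the lower bound $A+B\gtrsim\nu^{1/3}|k|^{1/2}$, which is \emph{weaker} than the claimed $\nu^{1/6}$, not stronger; the same sign error appears where you call $\nu^{1/3}$ ``the stronger bound.'' Only your Case 2 is correct. Moreover, the gap cannot be closed by better bookkeeping: at the frequency $k=1$, $\eta=kt$, $|l|\sim\nu^{-1/6}$ one has $\nu^{1/2}|k,\eta-kt,l|\sim\nu^{1/3}$ while $-\dot M_3/M_3=\nu^{1/3}/(1+l^2)\sim\nu^{2/3}$, hence $\sqrt{-\dot M_3M_3}\le\sqrt{-\dot M_3/M_3}\sim\nu^{1/3}$ as well; the two-term sum is genuinely of size $\nu^{1/3}\ll\nu^{1/6}$ there. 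So with the denominator $k^2+l^2+\nu^{2/3}(\eta-kt)^2$ exactly as printed in (\ref{eq:ghostenh}), the inequality of the lemma fails in the regime $|l|\gg|k|$, $|\eta-kt|$ small, and no application of AM--GM can rescue it.

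The intended multiplier (compare the analogous one in \cite{BGM15I}) is $-\dot M_3/M_3=\nu^{1/3}k^2/(k^2+\nu^{2/3}(\eta-kt)^2)$, without the $l^2$; that extra $l^2$ is precisely what destroys the bound. With that reading, the proof is a direct two-case split in which the two terms never need to cooperate multiplicatively: if $\nu^{1/3}|\eta-kt|\le|k|$, then $-\dot M_3/M_3\ge\nu^{1/3}/2$, so by (\ref{eq:ghost}) one gets $\sqrt{-\dot M_3M_3}\ge c_2\sqrt{\nu^{1/3}/2}\ge c_2\nu^{1/6}/\sqrt2$ outright; if instead $\nu^{1/3}|\eta-kt|\ge|k|\ge1$, then $\nu^{1/2}|k,\eta-kt,l|\ge\nu^{1/2}|\eta-kt|\ge\nu^{1/6}$, which is exactly your Case 2. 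The correct mechanism is thus that \emph{one} of the two terms is individually $\gtrsim\nu^{1/6}$ in each regime; their geometric mean, which is what AM--GM extracts, is only $\nu^{1/3}$ and is too small to do the job.
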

\noindent Using Lemma~\ref{ghostenhanced} we can obtain both pointwise and $L^2$ in time enhanced dissipation estimates that agree with the scaling suggested by the linear theory. See for example the proof of Lemma~\ref{lemma:boot}.

\subsection{Bootstrap argument} \label{outline:bootstrap}

To prove Theorem~\ref{thrm:main} we will use a bootstrap argument. We begin with a statement on the local well-posedness of (\ref{eq:origcoords}).

\begin{lemma} \label{lemma:lwp}
Let $\mu = \nu$, $s_0 > 5/2$, and suppose that $u_{in}$,$b_{in} \in H^s$ for some $s > 7/2$ are divergence free. Then, there exists $T_0(\|(u_{in},b_{in})\|_{H^{s_0}}) > 0$ (in particular, independent of $\nu$) with $\lim_{x\to 0}T_0(x) = \infty$ and a unique classical solution $(u,b) \in C([0,T_0];H^s)$ to (\ref{eq:origcoords}). Moreover, for all $0 < t < T_0$ the solution satisfies
\begin{subequations}
\begin{align}
& \|u(t)\|_{H^{s'}} + \|b(t)\|_{H^{s'}} < \infty \text{ for all }s'\ge 0, \label{eq:lwplemma1} \\ 
& \|\del u\|_{L^2(0,t;H^s)} + \|\del b\|_{L^2(0,t;H^s)} < \infty \label{eq:lwplemma2}. 
\end{align}
\end{subequations}
If $(u,b) \in C([0,T^*);H^s)$ is the maximally extended solution then 
$$\limsup_{t \to T^*}\|(u(t),b(t))\|_{H^{s_0}} = \infty.$$
\end{lemma}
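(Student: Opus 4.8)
The plan is to combine a short-time fixed point construction with energy estimates that are \emph{uniform in} $\nu$; the latter are what give the $\nu$-independent time $T_0$, since the dissipation --- contributing $-2\nu(\|\del u\|_{H^r}^2+\|\del b\|_{H^r}^2)\le 0$ to every $H^r$ energy identity when $\mu=\nu$ --- can be discarded in upper bounds and reinstated to read off~(\ref{eq:lwplemma2}). Two structural features of~(\ref{eq:origcoords}) are used repeatedly. First, the ``pressure'' terms $\del p^{\te{NL}}$ and $2\del\lap^{-1}\dx u^2$ are pure gradients, hence $L^2$-orthogonal to $\del(u,b)$ (and to divergence-free fields) in every $H^r$ pairing, so they drop out of all the energy and uniqueness estimates, while in $\mathcal N$ below they are controlled in $H^{s-1}$ by $\|(u,b)\|_{H^s}^2$ and $\|u\|_{H^s}$. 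Second, the Couette transport $y\dx$ is skew in each $H^r$ after integrating in $x$, and $[\jap{\del}^r,y]\dx$ has symbol $\les|k\eta|\jap{k,\eta,l}^{r-2}\les\jap{k,\eta,l}^r$, so it contributes only $\les\|\cdot\|_{H^r}^2$; equivalently, one works in the mixing coordinates of Section~\ref{intro:linearized}, where $y\dx$ is absent and $\nabla_L$ has coefficients $\les\jap{t}$ on bounded intervals. Fix $s_0\in(5/2,s]$.

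For the short-time theory, write~(\ref{eq:origcoords}) as $\dt(u,b)=\mathcal L(u,b)+\mathcal N(u,b)$, with $\mathcal L$ collecting $\nu\lap$, $-y\dx$, $\pm\alpha\db$ and $\mathcal N$ the quadratic, lift-up and pressure terms. The semigroup $e^{t\mathcal L}$ is explicit on the Fourier side (the $\alpha\db$ coupling is a bounded skew perturbation of $\nu\lap-y\dx$, whose propagator is $\widehat{w_0}(k,\eta,l)\mapsto\widehat{w_0}(k,\eta+kt,l)\,e^{-\nu\int_0^t(k^2+(\eta+ks)^2+l^2)\,ds}$), and satisfies, for $t\le 1$, $\|e^{t\mathcal L}w\|_{H^s}\les\|w\|_{H^s}$ and the smoothing bound $\|e^{t\mathcal L}\del w\|_{H^s}\les(\nu t)^{-1/2}\|w\|_{H^s}$. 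Since $\mathcal N:H^s\times H^s\to H^{s-1}$ is bounded on balls (as $s-1>3/2$), the Duhamel map $(u,b)\mapsto e^{t\mathcal L}(u_{\te{in}},b_{\te{in}})+\int_0^t e^{(t-\tau)\mathcal L}\mathcal N(u,b)(\tau)\,d\tau$ is a contraction on a ball of $C([0,T_1];H^s)$ with $T_1\sim\nu\|(u_{\te{in}},b_{\te{in}})\|_{H^s}^{-2}$; this gives a unique $(u,b)\in C([0,T^*);H^s)$ with the continuation criterion that $\|(u,b)(t)\|_{H^s}\to\infty$ as $t\to T^*$ if $T^*<\infty$. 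Applying $\Div{}$ to~(\ref{eq:origcoords}) and using that the pressure cancels all quadratic and $u^2$-type divergences, $(\Div u,\Div b)$ solves a closed linear transport--diffusion system with zero data, so $\Div u=\Div b=0$ on $[0,T^*)$. (Existence can instead be obtained by Friedrichs mollification plus compactness, which is cleaner for justifying the manipulations below.)

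Now the $\nu$-uniform a priori estimate. For a solution as above, the $H^{s_0}$ energy estimate --- using the transport cancellation $\inp{\jap{\del}^{s_0}w}{w\cdot\del\jap{\del}^{s_0}w}_{L^2}=0$, a Kato--Ponce commutator bound, the skew/commutator properties of $y\dx$ and $\alpha\db$, the vanishing of the pressure terms, and the nonpositive sign of the dissipation --- gives $\tfrac{d}{dt}\|(u,b)\|_{H^{s_0}}^2\le C_0(\|(u,b)\|_{H^{s_0}}^2+\|(u,b)\|_{H^{s_0}}^3)$ with $C_0$ independent of $\nu$ and $\alpha$. As long as $\|(u,b)\|_{H^{s_0}}\le 1$ the right side is $\le 2C_0\|(u,b)\|_{H^{s_0}}^2$, so this bound persists on an interval $[0,T_0]$ with $T_0\gtrsim C_0^{-1}\log(1/\|(u_{\te{in}},b_{\te{in}})\|_{H^{s_0}})$, which is $\nu$-independent and $\to\infty$ as the data tends to $0$. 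The $H^s$ energy estimate, now bounding the top-order transport commutator by $\|\del(u,b)\|_{L^\infty}\|(u,b)\|_{H^s}\les\|(u,b)\|_{H^{s_0}}\|(u,b)\|_{H^s}$ via $H^{s_0-1}\hookrightarrow L^\infty$, reads $\tfrac{d}{dt}\|(u,b)\|_{H^s}^2+2\nu\|\del(u,b)\|_{H^s}^2\les(1+\|(u,b)\|_{H^{s_0}})\|(u,b)\|_{H^s}^2$; Gr\"onwall then gives a $\nu$-independent bound $\|(u,b)(t)\|_{H^s}\le R_0$ on $[0,\min(T^*,T_0))$, and were $T^*<T_0$ this would contradict the continuation criterion, so $(u,b)\in C([0,T_0];H^s)$, and integrating the same identity yields~(\ref{eq:lwplemma2}). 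The remaining claims are routine: (\ref{eq:lwplemma1}) follows by bootstrapping the Duhamel formula --- $e^{t\mathcal L}u_{\te{in}}$ is smooth for $t>0$ (heat damping of high frequencies) and the heat smoothing lets the Duhamel integral gain almost a full derivative over $\mathcal N(u,b)\in H^{s-1}$, so iterating from any $t_0\in(0,T_0)$ places $(u,b)(t)$ in every $H^{s'}$; uniqueness follows from an $L^2$ estimate on the difference of two solutions, where the top-order transport terms vanish by incompressibility, the pressure-gradient differences vanish against the divergence-free difference, and the rest is bounded by $(1+\|\del(u_i,b_i)\|_{L^\infty})\|\cdot\|_{L^2}^2$ with $\del(u_i,b_i)\in L^\infty_tL^\infty_x$; and the $H^{s_0}$ blow-up criterion follows by re-running the $H^s$ Gr\"onwall with a hypothetical finite $\limsup\|(u,b)\|_{H^{s_0}}$, which would bound $\|(u,b)\|_{H^s}$ and contradict the $H^s$ criterion.

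The only point going beyond routine semilinear parabolic theory is the insistence that $T_0$ not depend on $\nu$: this rules out using parabolic smoothing in the a priori estimate (which would make $T_0$ degenerate as $\nu\to0$) and forces the hyperbolic-type energy estimates above, whose sole delicate ingredient is the unbounded Couette coefficient $y$, dealt with by the integration-by-parts cancellation and commutator bound of the first paragraph (or by passing to the mixing coordinates). Once that is in place everything reduces to standard commutator and product estimates.
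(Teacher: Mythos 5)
Your argument is correct, and its mathematical core --- $\nu$-uniform $H^{s_0}$ energy estimates giving a time of existence $T_0\gtrsim\log(1+\|(u_{\mathrm{in}},b_{\mathrm{in}})\|_{H^{s_0}}^{-1})$, combined with a classical local construction and a continuation criterion --- is the same as the paper's. The route through the equations differs, though. The paper first passes to the mixing coordinates and the Els\"asser profiles $Z^\pm=T^t_{\pm\alpha}(U\mp B)$, which diagonalizes the $\alpha\partial_\sigma$ coupling, removes $y\partial_x$ entirely, and recasts (\ref{eq:origcoords}) as a Navier--Stokes-type system (\ref{eq:lwp1}) with a Leray-type projector and a bounded oscillatory forcing; it can then simply invoke the Majda--Bertozzi energy method, at the price of the time-dependent coefficients in $\del_L$ (the $(1+t)$ factor in (\ref{eq:lwp2})) and a coordinate-equivalence step (\ref{eq:coordequiv}). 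You instead stay in the original coordinates, handle $y\partial_x$ by the skew-symmetry/commutator bound $|[\jap{\del}^r,y]\partial_x|\lesssim\jap{\del}^r$ and the magnetic coupling by the antisymmetry of $\partial_\sigma$ in the coupled $u$--$b$ energy, and you make the local construction explicit via a Duhamel fixed point with the shear-heat propagator (whose contraction time degenerates as $\nu\to0$, which you correctly note is harmless since the uniform time comes from the energy estimates). Your write-up is more self-contained --- it actually proves the instantaneous smoothing (\ref{eq:lwplemma1}), uniqueness, the propagation of the divergence-free condition, and the $H^{s_0}$ blow-up criterion rather than leaving them to the cited reference --- while the paper's reformulation is the slicker bookkeeping device and is in any case needed later for the nonlinear analysis. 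Two cosmetic imprecisions worth fixing: $\alpha\partial_\sigma$ is not a \emph{bounded} perturbation (it is first order); what you actually use is that the coupled symbol is skew-Hermitian on the Fourier side so the propagator is an $H^s$ contraction. And for large data $\log(1/\|(u_{\mathrm{in}},b_{\mathrm{in}})\|_{H^{s_0}})$ is negative, so $T_0$ should be defined via the full Riccati comparison (as in (\ref{eq:lwp3})) rather than the small-data linearization; this does not affect the statement.
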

\noindent \textit{Sketch of proof}. The $(X,Y,Z)$ coordinates defined in Sec.~\ref{intro:linearized} are equivalent to the $(x,y,z)$ coordinates for short times in the sense that for any function $g(t,x,y,z) = G(t,X,Y,Z)$ there holds
\begin{equation} \label{eq:coordequiv} \frac{1}{(1+t+t^2)^{s'}}\|G(t)\|^2_{H^{s'}_{X,Y,Z}}\le \|g(t)\|^2_{H^{s'}_{x,y,z}} \le (1+t+t^2)^{s'} \|G(t)\|^2_{H^{s'}_{X,Y,Z}}
\end{equation}
for all ${s'} \ge 0$. Hence, it suffices to prove Lemma~\ref{lemma:lwp} in the new variables. Switching to the new coordinate system and using the unknowns defined in (\ref{eq:elsass1}) and~(\ref{eq:profiles1}), the system (\ref{eq:origcoords}) can be written as
\begin{equation} \label{eq:lwp1}
\begin{cases}
    \dt Z^\pm + \mathbb{P}_t\left((T_{\pm 2\alpha}^t Z^\mp) \cdot \del_L Z^{\pm}\right) + \begin{pmatrix} T_{\pm 2\alpha}^t Z^{\mp,2} \\ 0 \\ 0 \end{pmatrix} = \nu \lap_L Z^{\pm}, \\ 
    \del_L \cdot Z^\pm  = 0,
\end{cases}
\end{equation}
where $\mathbb{P}_t$ denotes the projection onto $\del_L$ divergence free vector fields. Since $\mathbb{P}_t$ satisfies the same properties as the standard Leray projector and $T_{\pm 2\alpha}^t$ is bounded on any $H^{s'}$ space and commutes with $\del_L$, we see that (\ref{eq:lwp1}) has the same energy structure in the nonlinear term as the Navier-Stokes equations. A calculation involving a commutator estimate then yields the a priori bound
\begin{equation} \label{eq:lwp2}
    \frac{d}{dt} \|Z(t)\|_{H^{s_0}}^2 + \nu\|\del_L Z(t)\|_{H^{s_0}}^2  \les (1+t)\|Z(t)\|_{H^{s_0}}^3 + \|Z(t)\|_{H^{s_0}}^2,
\end{equation}
where we have defined the $\R^6$ valued function $Z = (Z^+,Z^-)$. Without loss of generality we can suppose that $t \les 1$, and so estimate (\ref{eq:lwp2}) implies that for some $C > 0$ there holds
\begin{equation} \label{eq:lwp3}
   \|Z(t)\|_{H^{s_0}} \le \frac{\|Z(0)\|_{H^{s_0}}e^{Ct}}{1-\|Z(0)\|_{H^{s_0}}(e^{Ct}-1)}.
\end{equation}
The existence of a unique classical solution $Z \in C([0,T_0];H^s) \cap C^1([0,T_0];H^{s-2})$ to (\ref{eq:lwp1}) for $T_0 \ges \log(1+\|Z(0)\|^{-1}_{H^{s_0}})$ then follows by the classical energy methods used in \cite{MajdaBert} to prove local existence for the 3D Navier-Stokes equations in subcritical Sobolev spaces.
\qed
\par 
\vspace{0.175cm}
\noindent 
A consequence of Lemma~\ref{lemma:lwp}, and in particular (\ref{eq:lwp2}), is that under the assumptions of Theorem~\ref{thrm:main} there exists $0 < t_1 \ll 1$ independent of $\nu$ such that for $c_0$ sufficiently small there holds $\|Z^\pm(t_1)\|_{H^{N+2}} \le \sqrt{2}\epsilon$ and
\begin{equation} \label{eq:boot1}
    \|Z^\pm\|_{L^\infty(0,2t_1;H^{N+2})} + \nu^{1/2}\|\del_L Z^\pm\|_{L^2(0,2t_1;H^{N+2})} \le 4\epsilon.
\end{equation}

\par Recall the definitions of $N$, $N'$, and $N''$ from Theorem~\ref{thrm:main}, and let $\til{N} = N' + 2 + n$. In what follows we use the shorthand notations
\begin{align*}
\lambda(t) & = e^{\delta \nu^{1/3} t}, \\ 
A(t,k,\eta,l) & = m M \lambda, \\ 
\til{A}(t,k,\eta,l) & = \til{m} M \lambda, \\
J(t,k,\eta,l) & = m^{1/2} M \lambda, \\ 
\til{J}(t,k,\eta,l) & = \jap{t}^{-1/2}J. \\
\end{align*}Let $t_2 \ge t_1$ be the maximal time such that the following estimates hold on $[t_1,t_2]$:
\begin{itemize}
\item the high norm bounds:
\end{itemize}
\begin{subequations}\label{eq:highh1}
\begin{align}
\|\til{A}\n{F}^{\pm,1}\|_{L^\infty H^N} + \nu^{1/2}\|\til{A}\del_L\n{F}^{\pm,1}\|_{L^2 H^N} + \|\til{m}\lambda \sqrt{-\dot{M}M}\n{F}^{\pm,1}\|_{L^2 H^N} &\le 8C_0\epsilon \nu^{-1/3} \label{eq:high1},\\ 
\|A\n{F}^{\pm,3}\|_{L^\infty H^N} + \nu^{1/2}\|A\del_L \n{F}^{\pm,3}\|_{L^2 H^N} + \|m\lambda \sqrt{-\dot{M}M}\n{F}^{\pm,3}\|_{L^2 H^N} &\le 8C_0\epsilon \nu^{-1/3} \label{eq:high3},\\ 
\|A \n{H}^2\|_{L^\infty H^N} + \nu^{1/2}\|A \del_L \n{H}^2\|_{L^2 H^N} + \|m \lambda \sqrt{-\dot{M}M}\n{H}^2\|_{L^2 H^N} &\le 8 \epsilon \label{eq:high2nonzero1},\\
\|A \n{Q}^2\|_{L^\infty H^N} + \nu^{1/2}\|A \del_L \n{Q}^2\|_{L^2 H^N} + \|m \lambda \sqrt{-\dot{M}M}\n{Q}^2\|_{L^2 H^N} &\le 8 \epsilon \label{eq:high2nonzero2},\\
\|(H_0,Q_0)\|_{L^\infty H^N} + \nu^{1/2}\|\del (H_0,Q_0)\|_{L^2 H^N} &\le 8 \epsilon \nu^{-1/3}; \label{eq:high2zero}
\end{align}
\end{subequations}
\begin{itemize}
\item the intermediate norm bounds: 
\end{itemize}
\begin{subequations} \label{eq:intboot}
\begin{align}
\|\til{J}\n{F}^{\pm,2}\|_{L^\infty H^{\til{N}}} + \nu^{1/2}\|\til{J}\del_L\n{F}^{\pm,2}\|_{L^2 H^{\til{N}}} + \|\jap{t}^{-1/2} m^{1/2}\lam\sqrt{-\dot{M}M}\n{F}^{\pm,2}\|_{L^2 H^{\til{N}}} &\le 8 \epsilon \label{eq:int1},\\ 
\|J\n{F}^{\pm,2}\|_{L^\infty H^{N'}} + \nu^{1/2}\|J\del_L\n{F}^{\pm,2}\|_{L^2 H^{N'}} + \|m^{1/2}\lam\sqrt{-\dot{M}M}\n{F}^{\pm,2}\|_{L^2 H^{N'}} &\le 8 \epsilon\label{eq:int2} ;
\end{align}
\end{subequations}
\begin{itemize}
\item the low norm bounds:
\end{itemize}
\begin{subequations}
\begin{align}
\|\til{A}\n{F}^{\pm,1}\|_{L^\infty H^{N''}} + \nu^{1/2}\|\til{A}\del_L\n{F}^{\pm,1}\|_{L^2 H^{N''}} + \|\til{m}\lambda \sqrt{-\dot{M}M}\n{F}^{\pm,1}\|_{L^2 H^{N''}} &\le 8C_0\epsilon \label{eq:low1},\\
\|A\n{F}^{\pm,3}\|_{L^\infty H^{N''}} + \nu^{1/2}\|A\del_L \n{F}^{\pm,3}\|_{L^2 H^{N''}} + \|m\lambda \sqrt{-\dot{M}M}\n{F}^{\pm,3}\|_{L^2 H^{N''}} &\le 8C_0\epsilon; \label{eq:low3}
\end{align}
\end{subequations}
\begin{itemize}
\item the zero mode bounds on the velocity and magnetic field:
\end{itemize}
\begin{equation}
\|(u_0,b_0)\|_{L^\infty H^N} + \nu^{1/2}\|\del (u_0,b_0)\|_{L^2 H^N} \le 8 \epsilon\label{eq:zerovelocity}.
\end{equation}
Here, $C_0 \ge 1$ is a constant to be fixed by the proof. We refer to the list of inequalities above as the bootstrap hypotheses. Henceforth, all norms will be taken on $[t_1,t_2]$.
\par 
We claim that $t_2 \ge 2 t_1$ for $t_1$ sufficiently small (still uniformly in $\nu$). Indeed, this follows from (\ref{eq:boot1}), $|\lap_L| \le (1 + t + t^2)|\lap|$, and the fact that all of our Fourier multipliers are continuous and equal to unity at $t = 0$. The plan is then to prove that $t_2 = \infty$. Since all of the norms in (\ref{eq:highh1}) -- (\ref{eq:zerovelocity}) take values continuously in time, it suffices to prove the following proposition.
\begin{proposition} \label{prop:boot}
Under the assumptions of Theorem~\ref{thrm:main}, estimates (\ref{eq:highh1}) -- (\ref{eq:zerovelocity}) all hold with the ``8'' replaced by a ``4'' provided that $t_1 < t_0$ for a sufficiently small universal constant $t_0$.
\end{proposition}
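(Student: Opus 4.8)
\medskip
\noindent\emph{Plan of proof.} The plan is to close a coupled system of weighted energy estimates, handling the bootstrap quantities in (\ref{eq:highh1})--(\ref{eq:zerovelocity}) one at a time on the fixed interval $[t_1,t_2]$. For each unknown I would test its evolution equation from Sec.~\ref{outline:reformulation} against the square of the corresponding multiplier applied to that unknown --- for instance $\til A^{2}\n{F}^{\pm,1}$, $A^{2}\n{F}^{\pm,3}$, $A^{2}\n{H}^{2}$, $A^{2}\n{Q}^{2}$, and $\til J^{2}$ and $J^{2}$ times $\n{F}^{\pm,2}$, together with the low-norm and zero-mode analogues --- and integrate in $(x,y,z)$ and in time. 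The ODEs (\ref{eq:m}), (\ref{eq:mtil}), (\ref{eq:mhalf}) and (\ref{eq:ghost1})--(\ref{eq:ghostenh}) are designed precisely so that the $\dot m/m$ factor (and its $\til m$ and $m^{1/2}$ variants) cancels the linear-stretch term $\te{LS}$ up to enhanced dissipation, while the $-\dot{M}_j/M_j$ factors generate on the left-hand side the nonnegative quantities $\|\sqrt{-\dot M M}(\cdot)\|_{L^2H^s}^{2}$ that encode the inviscid damping with no loss of regularity; combining these with the viscous gain $\nu\|\del_L(\cdot)\|^{2}$ and with Lemmas~\ref{lemma:stretch} and~\ref{ghostenhanced} (the latter restoring the $\nu^{1/6}$ scaling near the critical times) provides the coercive quantities against which everything else is absorbed. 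The data terms at $t=t_1$ are controlled by (\ref{eq:boot1}) and $\epsilon\le c_0\nu$, contributing $\mathcal{O}(\epsilon)$, or $\mathcal{O}(\epsilon\nu^{-1/3})$ in the high norms.

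The nonlinear terms $\te{NLT}$, $\te{NLS1}$, $\te{NLS2}$, $\te{NLP}$ I would handle with the paraproduct splitting of Sec.~\ref{sec:freqdecomp}: in $\HL{f}{g}$ I put the high-frequency factor in the highest currently available weighted Sobolev norm and the low-frequency factor in $H^{\kappa}$ via (\ref{eq:productrule}), and symmetrically for $\LH{f}{g}$; the commutator bounds (\ref{mcom}), (\ref{mtrivcomt}), (\ref{mtildecay}) let me move the weight between the two factors at the price of finitely many derivatives, which is exactly what the regularity gaps $N>\til N>N'>N''$ (with $\til N=N'+2+n$, $N'=N-4-2n$, $N''=N-9-3n$) are chosen to absorb. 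Time integrability of these terms comes from pairing the enhanced-dissipation gain (the factors $\lambda$, the $\nu^{1/6}$ in the $L^{2}_{t}$ norms, and $\sqrt{-\dot M_3 M_3}$) with the inviscid-damping decay carried by $M_{1}$, $M_{2}$ and by the $\jap{t}^{-1/2}$ in $\til J$; the apparent loss of one derivative in the transport nonlinearity is recovered because $\del_L$ falls on the low-frequency factor in $\HL{f}{g}$ and is balanced by (\ref{mcom}) in $\LH{f}{g}$, as in \cite{BGM15I}, and for nonlinear interactions whose oscillating factor has nonzero $X$ or $Z$ frequency one may additionally integrate by parts in time (Remark~\ref{remark:spacetimeres}). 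For the zero-mode bound (\ref{eq:zerovelocity}) I would work directly with (\ref{eq:reform4}) and integrate by parts in time in the lift-up term $T_{\pm2\alpha}^{t}Z_0^{\mp,2}$ via (\ref{eq:identity}); since $k=0$ forces $l\neq0$ by incompressibility we have $1\lesssim|\sigma k+l|$ and no regularity is lost, and undoing the Els\"{a}sser/profile changes of variables and using (\ref{eq:coordequiv}) yields the $\mathcal{O}(\epsilon)$ estimate reflecting the linear suppression of the lift-up effect.

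The step I expect to be the main obstacle is the oscillating linear stretch $\te{OLS}$, together with $\te{LP2}$ and the nonzero-mode lift-up $\te{LU}$ --- the nonlinear incarnation of the model computation (\ref{eq:model})--(\ref{eq:model7}). Because the profiles $F^{\pm}$ do not decay, I would integrate by parts in time through (\ref{eq:identity}), trading the $T_{\pm2\alpha}^{t}$ oscillation for one extra power of $\jap{t}^{-1}$ at the cost of a factor $(\alpha|\sigma k+l|)^{-1}$; the Diophantine inequality (\ref{eq:dioph2}) absorbs $|\sigma k+l|^{-1}$ by paying $n$ derivatives, and $\alpha>c_1/c$ large supplies the smallness. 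This is precisely why the $\n{F}^{\pm,2}$ estimate has to be run at the two levels (\ref{eq:int1})--(\ref{eq:int2}): the higher-regularity norm, which permits more time growth, feeds the lower one just as (\ref{eq:model7}) feeds (\ref{eq:model6}), and one iterates the time integration by parts twice --- paying $2n$ derivatives --- to cope with the fact that only $\theta=0$ is feasible in the top norm. The boundary terms generated are controlled by the same norms at $t_1$ and at $t$, and the $\n{F}^{\pm,1}$, $\n{F}^{\pm,3}$, $\n{Q}^{2}$, $\n{H}^{2}$ estimates then close with $\n{F}^{\pm,2}$ treated as a known forcing through $\te{LU}$, $\te{LP1}$, $\te{LP2}$, the low-norm estimates (\ref{eq:low1}) and (\ref{eq:low3}) being the same computation stripped of the $\nu^{-1/3}$ amplification. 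Once every forcing contribution is bounded by a small multiple of a coercive left-hand-side quantity --- with smallness supplied in turn by choosing $C_0$ large, then $\alpha$ large, then $\epsilon/\nu$ and $t_1=t_0$ small, all with constants independent of $\nu$ --- the ``$8$''s improve to ``$4$''s. The genuine difficulty throughout is the bookkeeping: making sure the regularity consumed by each Diophantine integration by parts in time never exceeds the available gap $N-\til N$, $\til N-N'$ or $N'-N''$, and is always spent at a level at which the extra time growth it buys is admissible.
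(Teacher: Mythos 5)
Your plan is, in outline, the paper's proof: weighted energy estimates for each unknown with the multipliers $\til{A}$, $A$, $\til{J}$, $J$; absorption of the linear stretch and of the $\delta\nu^{1/3}$ term into the coercive left-hand side via (\ref{eq:m}), (\ref{eq:mtil}), (\ref{eq:mhalf}) and Lemma~\ref{ghostenhanced}; paraproduct treatment of the nonlinearities using the gaps $N>\til{N}>N'>N''$; and integration by parts in time for the oscillating terms, with the Diophantine loss of $n$ derivatives from (\ref{eq:dioph2}) paid by cascading from the high norm to $H^{\til{N}}$ to $H^{N'}$ (and to $H^{N''}$ for $\n{F}^{1}$, $\n{F}^{3}$). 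The zero-mode treatment of the lift-up term is also as in the paper.

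There are, however, two places where the plan as written would not close. First, you propose to handle $\te{LP2}$ by integration by parts in time. In the top norm $H^N$ this is not affordable: $\db^{-1}$ costs $n$ derivatives and there is no norm above $H^N$ to pay them from. The paper instead controls both linear pressure terms directly with the ghost multipliers, since the symbols of $\partial_{XX}\lap_L^{-1}$ and $\partial_{XZ}\lap_L^{-1}$ are dominated by $-\dot{M}_1/M_1$ and $-\dot{M}_2/M_2$ respectively, so $\te{LP1}$ and $\te{LP2}$ are bounded by products of the $\sqrt{-\dot{M}M}$ quantities already present on the left-hand side; no oscillation is used there. Second, the nonzero-mode lift-up $\te{LU}$ in the $H^N$ estimate of $\n{F}^{\pm,1}$ is likewise not integrated by parts (same obstruction); the paper applies Cauchy--Schwarz and accepts the $\nu^{-1/3}$ loss, which is precisely why (\ref{eq:high1}) carries that factor. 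The loss is removed only at the level $H^{N''}$, and that estimate is emphatically not ``the same computation stripped of the amplification'': it is exactly there that one integrates by parts in time in $\te{LU}$, pays the $n$ derivatives against the intermediate bound (\ref{eq:int2}) (possible since $N''+n+2\le N'$), and invokes the decay (\ref{mtildecay}) of $\til{m}$ --- the reason $\til{m}$ rather than $m$ weights $\n{F}^{1}$ in the first place --- to extract the extra $\jap{t}^{-1}$ that makes the time integrals converge. A smaller structural point: if you test the $\n{Q}^2$ and $\n{H}^2$ equations separately, the $O(\alpha)$ coupling terms $\alpha\db H^2$ and $\alpha\db Q^2$ survive individually and are eliminated only by the antisymmetric cancellation obtained upon summing the two estimates; this cancellation should be stated, as it is the reason the high-norm estimate is run on the pair $(Q^2,H^2)$ rather than on $F^{\pm,2}$.
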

\noindent The proof of Proposition~\ref{prop:boot} is carried out in Secs.~\ref{sec:highmain} -- \ref{sec:zero} and the fact that Proposition~\ref{prop:boot} implies Theorem~\ref{thrm:main} is proven below in Lemma~\ref{lemma:boot}.

\begin{remark} \label{rem:smooth}
The purpose of defining the bootstrap hypotheses on $[t_1,t_2]$ instead of $[0,t_2]$ is to ensure that the classical solution we perform our calculations with satisfies 
\begin{equation} \label{eq:boot2}
\sup_{t\in[t_1,t_2]} \left(\|Z^\pm\|_{H^{s'}} + \|\dt Z^\pm\|_{H^{s'}}\right) < \infty \quad \forall\text{  }s'\ge 0,
\end{equation}
which follows from (\ref{eq:lwplemma1}) and applying Lemma~\ref{lemma:lwp} starting at $t = t_1$.
\end{remark}

\begin{remark}
In light of the discussion at the end of Sec.~\ref{sec:ibpt}, the general structure of the bootstrap hypotheses should be expected. Perhaps the most subtle aspect is the inclusion of $\til{m}$ in the norm for $\n{F}^1$. Physically, this represents allowing the frequencies of $\n{F}^1$ to grow indefinitely after the critical time, which enables us to use integration by parts in time to control the lift-up effect in the low norm with no losses. The key inequality here is (\ref{mtildecay}). On the other hand, the use of a second intermediate norm is more of a technical detail than something deep, and arises essentially from the same scaling that forces one to take $\theta > 0$ in (\ref{eq:model5}).
\end{remark}

\subsubsection{Choice of constants} \label{sec:constants}
Recall the definitions of $c$ and $c_0$ from the statement of Theorem~\ref{thrm:main}. In the proof the various constants will be fixed as follows. We first fix $C_0$ to be a sufficiently large universal constant and $\delta > 0$ to be sufficiently small. Then, $\alpha$ and $c_0$ are chosen to satisfy $\alpha \gg C_0/c$ and $c_0 \ll (\delta/C_0)^p$ for $p$ sufficiently large. We pick $t_0$ in Proposition~\ref{prop:boot} such that $e^{2\delta \nu^{1/3}t_0}(1 + t_0 + t_0^2)^2 \le 2.$ \par

\subsection{Estimates following from the bootstrap hypotheses}

Now we prove a lemma that details the enhanced dissipation and inviscid damping estimates that follow immediately from the bootstrap hypotheses. 

\begin{lemma}\label{lemma:boot}
Let $G$ denote either $Q$ or $H$, and $V$ denote either $U$ or $B$. Under the bootstrap hypotheses the following estimates hold on $[t_1,t_2]$:
\begin{itemize}
\item the enhanced dissipation of $\n{Q}$ and $\n{H}$:
\end{itemize}
\begin{subequations} \label{lem:enh}
\begin{align}
\nu^{1/3}\|\til{A} \n{G}^1\|_{L^2 H^{N}} + \|\til{A}\n{G}^1\|_{L^2 H^{N''}} & \les \epsilon \nu^{-1/6} \\ 
\|A \n{G}^2\|_{L^2 H^{N}} + \|J\n{G}^2\|_{L^2 H^{N'}} & \les \epsilon \nu^{-1/6} \\ 
\nu^{1/3}\|A \n{G}^3\|_{L^2 H^{N}} + \|A\n{G}^3\|_{L^2 H^{N''}} & \les \epsilon \nu^{-1/6}
\end{align}
\end{subequations}
\begin{itemize}
\item the bounds on $\n{U}$ and $\n{B}$, denoting $j\in \{1,3\}$:
\end{itemize}
\begin{subequations} \label{lem:13}
\begin{align}
\nu^{1/3}\|e^{\delta \nu^{1/3}t}\lap_{X,Z}\n{V}^j\|_{L^\infty H^{N}} + \nu^{5/6}\|e^{\delta \nu^{1/3}t}\del_L \lap_{X,Z} \n{V}^j\|_{L^2 H^N} + \nu^{1/2}\|e^{\delta \nu^{1/3}t}\lap_{X,Z}\n{V}^j\|_{L^2 H^N} & \les \epsilon \\ 
\|e^{\delta \nu^{1/3}t}\lap_{X,Z}\n{V}^j\|_{L^\infty H^{N''}} + \nu^{1/2}\|e^{\delta \nu^{1/3}t}\del_L \lap_{X,Z}\n{V}^j\|_{L^2 H^{N''}} + \nu^{1/6}\|e^{\delta \nu^{1/3}t}\lap_{X,Z}\n{V}^j\|_{L^2 H^{N''}} & \les \epsilon \\ 
\|e^{\delta \nu^{1/3}t}\lap_{X,Z}\n{V}^2\|_{L^\infty H^N} + \nu^{1/2}\|e^{\delta \nu^{1/3}t} \del_L \lap_{X,Z}\n{V}^2\|_{L^2 H^N} + \nu^{1/6}\|e^{\delta \nu^{1/3}t}\lap_{X,Z}\n{V}^2\|_{L^2 H^N} &\les \epsilon
\end{align}
\end{subequations}
\begin{itemize}
\item the inviscid damping of $\n{U}^2$ and $\n{B}^2$:
\end{itemize}
\begin{equation}
\|e^{\delta \nu^{1/3}t}\del_{X,Z}\n{V}^2\|_{L^2 H^{N'}} + \|e^{\delta \nu^{1/3}t} \jap{t} \del_{X,Z}\n{V}^2\|_{L^\infty H^{N' - 1}} \les \epsilon. \label{eq:invdamp}
\end{equation}
\end{lemma}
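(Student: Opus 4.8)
We sketch the argument. All three families of bounds are consequences of the bootstrap hypotheses together with Lemmas~\ref{lemma:stretch} and~\ref{ghostenhanced}, obtained by Fourier-multiplier manipulations, so the plan is a single reduction followed by the three cases. Recall from (\ref{eq:elsass1})--(\ref{eq:profile2}) that $w^\pm=u\mp b$ and $F^\pm=\lap z^\pm=T_{\pm\alpha}^t\lap w^\pm$. Solving for $\lap u$, $\lap b$ and passing to $(X,Y,Z)$ coordinates yields, componentwise, $Q^i=\tfrac12(T_{-\alpha}^tF^{+,i}+T_\alpha^tF^{-,i})$ and $H^i=\tfrac12(T_\alpha^tF^{-,i}-T_{-\alpha}^tF^{+,i})$. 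Since each $T_a^t$ has a unimodular symbol and commutes with every multiplier depending only on $(k,\eta,l)$, for any such $\mathcal M$ and any $s$ one has $\|\mathcal M\n{G}^i\|_{H^s}\les\|\mathcal M\n{F}^{+,i}\|_{H^s}+\|\mathcal M\n{F}^{-,i}\|_{H^s}$, so it suffices to prove the $\n{F}^\pm$ versions of (\ref{lem:enh})--(\ref{eq:invdamp}) and transfer. Also $\lap_L$ is invertible on the $k\neq0$ modes and $V^i=\lap_L^{-1}G^i$, so, up to harmless constants, $\lap_{X,Z}\n{V}^i$, $\del_L\lap_{X,Z}\n{V}^i$, $\del_{X,Z}\n{V}^i$ are the multipliers $\frac{k^2+l^2}{k^2+(\eta-kt)^2+l^2}$, $|\del_L|\frac{k^2+l^2}{k^2+(\eta-kt)^2+l^2}$, $\frac{|k|+|l|}{k^2+(\eta-kt)^2+l^2}$ applied to $\n{G}^i$.

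For the enhanced dissipation (\ref{lem:enh}): writing the relevant weight as $\mathcal M=\mu M\lambda$ with $\mu\in\{m,m^{1/2},\til{m}\}$, Lemma~\ref{ghostenhanced} gives $c_3\nu^{1/6}\mathcal M\le\nu^{1/2}|k,\eta-kt,l|\,\mathcal M+\sqrt{-\dot{M_3}M_3}\,\mathcal M$ pointwise in frequency, and since $\sqrt{-\dot{M_3}M_3}\les\sqrt{-\dot{M}M}$ and $M\le1$ (by (\ref{eq:ghost})) the last term is $\les\mu\lambda\sqrt{-\dot{M}M}$. Taking $L^2_tH^s$ norms of $\mathcal M\widehat{\n{F}}$, the two terms on the right become exactly $\nu^{1/2}\|\mathcal M\del_L\n{F}\|_{L^2H^s}$ and $\|\mu\lambda\sqrt{-\dot{M}M}\n{F}\|_{L^2H^s}$, both controlled in the matching bootstrap hypothesis; for instance $\nu^{1/6}\|\til{A}\n{F}^{\pm,1}\|_{L^2H^N}\les\epsilon\nu^{-1/3}$ by (\ref{eq:high1}), $\nu^{1/6}\|\til{A}\n{F}^{\pm,1}\|_{L^2H^{N''}}\les\epsilon$ by (\ref{eq:low1}), and the remaining lines of (\ref{lem:enh}) follow the same way from (\ref{eq:high3}), (\ref{eq:low3}), (\ref{eq:int2}), (\ref{eq:high2nonzero1}), (\ref{eq:high2nonzero2}), then transferring to $\n{G}^i$.

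For the bounds on $\n{V}$ (\ref{lem:13}) and the inviscid damping (\ref{eq:invdamp}): by (\ref{mle1}) and (\ref{mlapl}), $\frac{k^2+l^2}{k^2+(\eta-kt)^2+l^2}\les\til{m}\le m$, so each of the first two symbols above is $\les\til{m}$, resp. $\les|\del_L|\til{m}$, and with $M\ges1$ this dominates, pointwise in $H^s$, $\lambda\lap_{X,Z}\n{V}^1$ and $\lambda\del_L\lap_{X,Z}\n{V}^1$ by $\til{A}\n{G}^1$, $\til{A}\del_L\n{G}^1$ (and, with $m$ for $\til{m}$, $\lambda\lap_{X,Z}\n{V}^j$, $\lambda\del_L\lap_{X,Z}\n{V}^j$ by $A\n{G}^j$, $A\del_L\n{G}^j$ for $j\in\{2,3\}$). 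Inserting the $L^\infty_t$ bootstrap bounds (\ref{eq:high1}), (\ref{eq:high3}), (\ref{eq:high2nonzero1}), (\ref{eq:high2nonzero2}), (\ref{eq:low1}), (\ref{eq:low3}) and the $L^2_t$ bounds from (\ref{lem:enh}), and tracking powers of $\nu$, gives (\ref{lem:13}). For (\ref{eq:invdamp}) the two halves follow from the pointwise-in-frequency inequalities, valid for $k\neq0$,
\begin{gather*}
\jap{t}(|k|+|l|)\les m^{1/2}\jap{k,\eta,l}\big(k^2+(\eta-kt)^2+l^2\big),\\
\frac{|k|+|l|}{k^2+(\eta-kt)^2+l^2}\les m^{1/2}\sqrt{-\dot{M}M}.
\end{gather*}
Granting the first, $\jap{t}\lambda\del_{X,Z}\n{V}^2$ is dominated in $H^{N'-1}$ by $m^{1/2}\lambda\n{F}^{\pm,2}$ in $H^{N'}$, hence by $\|J\n{F}^{\pm,2}\|_{L^\infty H^{N'}}\le8\epsilon$ from (\ref{eq:int2}), the single-derivative loss matching that in the linear estimate (\ref{eq:lineardamping}); granting the second, $\lambda\del_{X,Z}\n{V}^2$ is dominated in $H^{N'}$ by $m^{1/2}\lambda\sqrt{-\dot{M}M}\n{F}^{\pm,2}$, which is $\le8\epsilon$ in $L^2_tH^{N'}$ again by (\ref{eq:int2}). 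Transferring to $\n{V}^2\in\{\n{U}^2,\n{B}^2\}$ closes (\ref{eq:invdamp}).

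The only place a genuine (if short) computation is needed --- the point I expect to be the main obstacle --- is the pair of pointwise inequalities just used. Both rest on the elementary bound $m\ges\frac{k^2+l^2}{k^2+(\eta-kt)^2+l^2}$, which is immediate from (\ref{mle1}) and (\ref{mlapl}). With it, $m^{1/2}\sqrt{-\dot{M}M}\ges\big(\tfrac{k^2+l^2}{k^2+(\eta-kt)^2+l^2}\big)^{1/2}\tfrac{|k|}{\sqrt{k^2+(\eta-kt)^2+l^2}}=\tfrac{|k|\sqrt{k^2+l^2}}{k^2+(\eta-kt)^2+l^2}\ges\tfrac{|k|+|l|}{k^2+(\eta-kt)^2+l^2}$, using the $M_1$-bound $-\dot{M}/M\ge-\dot{M_1}/M_1=\tfrac{k^2}{k^2+(\eta-kt)^2+l^2}$ of (\ref{eq:ghost1}) together with $|k|\ge1$ and $\sqrt{k^2+l^2}\ges|k|+|l|$; and the first inequality follows from $m^{1/2}\big(k^2+(\eta-kt)^2+l^2\big)\ges\sqrt{k^2+l^2}\sqrt{k^2+(\eta-kt)^2+l^2}\ges(|k|+|l|)\sqrt{k^2+(\eta-kt)^2+l^2}$ after checking the elementary estimate $\jap{t}\les\jap{k,\eta,l}\sqrt{k^2+(\eta-kt)^2+l^2}$ (split according to whether $t\le2\eta/k$ when $\eta k>0$, and use $|\eta-kt|\ge|k|t$ when $\eta k\le0$). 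All remaining work is bookkeeping of which hypothesis feeds which conclusion and at which regularity level.
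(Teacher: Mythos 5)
Your proposal is correct and follows essentially the same route as the paper: Lemma~\ref{ghostenhanced} plus the bootstrap hypotheses for (\ref{lem:enh}), the inequality (\ref{mlapl}) for (\ref{lem:13}), and the pointwise multiplier bounds $|\del_{X,Z}|\les m^{1/2}\sqrt{-\dot{M}M}|\lap_L|$ and $|\del_{X,Z}|\les m^{1/2}|\del_L|^{-1}|\lap_L|$ with $|\del_L|^{-1}\les\jap{t}^{-1}\jap{\del}$ for (\ref{eq:invdamp}). You are merely more explicit than the paper about the (harmless, unimodular) transfer between $F^{\pm}$ and $Q,H$ and about verifying the frequency-side inequalities, all of which check out.
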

\begin{proof}
First consider the estimates in (\ref{lem:enh}). Observe that for any $s \ge 0$ and $G \in H^s$ we have, by Lemma~\ref{ghostenhanced},
$$\nu^{1/6}\|\n{G}\|_{H^s} \les \nu^{1/2}\|\del_{L} \n{G}\|_{H^s} + \|\sqrt{-\dot{M}M} \n{G}\|_{H^s}.$$
The inequalities in (\ref{lem:enh}) then follow immediately from the bootstrap hypotheses. The estimates in (\ref{lem:13}) follow similarly after employing also (\ref{mlapl}). Now we turn to the inviscid damping estimates. For the first term in (\ref{eq:invdamp}) we use that $|\del_{X,Z}|\les m^{1/2} |\del_L| \les m^{1/2}\sqrt{-\dot{M}M}|\lap_L|$ to obtain $$\|e^{\delta \nu^{1/3}t} \del_{X,Z} \n{V}^2\|_{L^2 H^{N'}} \les \|m^{1/2}\lam \sqrt{-\dot{M}M} \n{G}^2\|_{L^2 H^{N'}},$$
and hence the desired inequality follows from the bootstrap hypothesis (\ref{eq:int2}). For the other term in (\ref{eq:invdamp}) we use $|\del_{X,Z}| \les m^{1/2}|\del_L|^{-1}|\lap_L|$ along with $|\del_L|^{-1} \les \jap{t}^{-1}\jap{\del}$ to derive
$$ \|e^{\delta \nu^{1/3}t} \del_{X,Z} \n{V}^2\|_{H^{N'-1}} \les \|\lam m^{1/2} |\del_L|^{-1} \n{G}^2\|_{H^{N' -1}} \les \jap{t}^{-1} \|J \n{G}^{2}\|_{H^{N'}}, $$
and so the result follows again from (\ref{eq:int2}).
\end{proof}
\noindent We will use the enhanced dissipation estimates in Lemma~\ref{lemma:boot} so frequently throughout the proof that we will typically do so without any remark. \par 

\section{High norm energy estimates} \label{sec:highmain}
Before proceeding to the estimates we establish some simplifying notation to keep the formulas looking as concise as possible. As noted in Remark~\ref{remark:spacetimeres}, our proof does not rely on the non-resonance structure of the nonlinearity.  We will thus systematically drop the transport operator $T_{\pm 2\alpha}^t$ in the nonlinear terms, beyond writing out the initial energy estimate. This is inconsequential because $T_a^t$ commutes with derivatives and preserves norms on $H^s$ spaces. Similarly, it is irrelevant in the nonlinearity which variables are ``$+$'' type and which are ``$-$'' type, and so in the nonlinear terms we will simply drop this superscript. Lastly, by the symmetry of (\ref{eq:reform1}) -- (\ref{eq:reform3}) and (\ref{eq:reform4}) it clearly suffices to estimate only the ``$+$'' variables. \par 

\begin{remark} The weighted energy estimates in the following sections are best understood as being performed on the Fourier side. Note however that the multiplier $m$ is not $C^1$ in time and the a priori bounds on the solution are not enough to ensure that its Fourier transform is continuous. To make the estimates rigorous we mollify $m$ in time, approximate the solution by using a smooth cutoff in the $Y$ variable, and then pass to the limit. This procedure yields the same estimates as one would obtain from a formal calculation because the weak derivative of $m$ is uniformly bounded in time and frequency. For simplicity we omit these steps in the computations.
\label{sec:high}
\end{remark}
\subsection{Estimate of $\n{F}^{\pm,1}$} \label{sec:high1}
In this section we improve (\ref{eq:high1}). Recall the shorthands defined in Sec.~\ref{outline:reformulation}. An energy estimate gives
\begin{align*}
& \frac{1}{2}\|\til{A} \n{F}^{+,1}(t_2)\|^2_{H^N} 
+ \nu\|\til{A}\del_L \n{F}^{+,1}\|_{L^2 H^N}^2 
+ \|\til{m} \lam \sqrt{-\dot{M}M}\n{F}^{+,1}\|_{L^2 H^N}^2 
+ \|M\lam \sqrt{-\dot{\til{m}}\til{m}}\n{F}^{+,1}\|_{L^2 H^N}^2\\ 
& = \frac{1}{2}\|\til{A}\n{F}^{+,1}(t_1)\|_{H^N}^2 -\int_{t_1}^{t_2} \inp{\til{A}\n{F}^{+,1}}{T_{2\alpha}^t\til{A} \n{F}^{-,2}}_{H^N} dt 
- 2\int_{t_1}^{t_2} \inp{\til{A}\n{F}^{+,1}}{\til{A} \dXY^L\lap_L^{-1}\n{F}^{+,1}}_{H^N} dt \\ 
& + \int_{t_1}^{t_2} \inp{\til{A}\n{F}^{+,1}}{\til{A}\partial_{XX} \lap_L^{-1} \n{F}^{+,2}}_{H^N} dt
+ \int_{t_1}^{t_2} \inp{\til{A}\n{F}^{+,1}}{\til{A}\partial_{XX} T_{2\alpha}^t\lap_L^{-1}\n{F}^{-,2}}_{H^N} dt \\
& + \delta \nu^{1/3} \int_{t_1}^{t_2} \|\til{A} \n{F}^{+,1}\|_{H^N}^2 dt
+ \int_{t_1}^{t_2} \inp{\til{A} \n{F}^{+,1}}{\dX \til{A}(\djj^L T_{2\alpha}^tZ^{-,i} \di^L Z^{+,j})}_{H^N}dt \\
& - \int_{t_1}^{t_2} \inp{\til{A} \n{F}^{+,1}}{\til{A}(T_{2\alpha}^t Z^{-}\cdot \del_L F^{+,1})}_{H^N}dt
- \int_{t_1}^{t_2} \inp{\til{A} \n{F}^{+,1}}{\til{A}(T_{2\alpha}^t F^{-}\cdot \del_L Z^{+,1})}_{H^N}dt \\
& - 2\int_{t_1}^{t_2} \inp{\til{A} \n{F}^{+,1}}{\til{A}(T_{2\alpha}^t\di^L Z^{-,j} \dij^L Z^{+,1})}_{H^N}dt \\  
& = \frac{1}{2}\|\til{A}\n{F}^{+,1}(t_1)\|_{H^N}^2 + \te{LU} + \te{LS} + \te{LP1} + \te{LP2} + \te{L}_{\lam} + \te{NLP} + \te{NLT} + \te{NLS1} + \te{NLS2},
\end{align*}
where we have introduced the additional shorthand 
$$\te{L}_{\lambda} = \delta \nu^{1/3} \int_{t_1}^{t_2} \|\til{A} \n{F}^{+,1}\|_{H^N}^2 dt $$
for the term where the time derivative lands on the exponentially growing multiplier $\lambda$. We will continue to use this shorthand throughout for the analogous term in future estimates. The choice of $t_0$ in Sec.~\ref{sec:constants}, along with $\|Z^\pm(t_1)\|^2_{H^{N+2}} \le 2\epsilon^2$, guarantees that $\|\til{A}\n{F}^{+,1}(t_1)\|_{H^N}^2 \le 4 \epsilon^2,$ which is consistent with Proposition~\ref{prop:boot}.

\subsubsection{Lift-up term}
By Cauchy-Schwarz and $|\til{A}| \le |A|$ we have
\begin{align*}
|\te{LU}| \le \|\til{A} \n{F}^{+,1}\|_{L^2 H^N} \|A\n{F}^{-,2}\|_{L^2 H^N} \les \epsilon^2 C_0 \nu^{-1/2} \nu^{-1/6} = C_0^{-1} (\epsilon C_0 \nu^{-1/3})^2,
\end{align*}
which suffices for $C_0$ chosen sufficiently large. 

\subsubsection{Linear stretching term}
It follows by the definition of $\til{m}$ that 
\begin{align*}
\te{LS} \le \|\lam M \sqrt{-\dot{\til{m}}\til{m}} \n{F}^{+,1}\|_{L^2 H^N}^2,
\end{align*}
and so this term is absorbed into the left-hand side of the energy estimate. 

\subsubsection{Linear pressure terms}
Both linear pressure terms are treated similarly, and so we only consider LP1. By (\ref{eq:ghost1}) and $\nu \in (0,1]$ we have 
$$|\te{LP1}| \les \|\til{m} \lam \sqrt{-\dot{M}M} \n{F}^{+,1}\|_{L^2 H^N} \|m \lam \sqrt{-\dot{M}M}\n{F}^{+,2}\|_{L^2 H^N} \les \epsilon^2 C_0 \nu^{-1/3} \le C_0^{-1} (\epsilon C_0 \nu^{-1/3})^2,$$
which is consistent for $C_0$ sufficiently large.

\subsubsection{The term $\te{L}_{\lam}$} \label{sec:lamderiv}
By Lemma~\ref{ghostenhanced} it follows that for $0 < \delta < 1$ sufficiently small there holds 
$$\delta \nu^{1/3} \le \frac{\nu}{2}(k^2 + (\eta - kt)^2 + l^2) - \frac{1}{2} \frac{\dot{M}}{M},$$
from which we obtain
$$L_\lam \le \frac{\nu}{2}\|\til{A}\del_L \n{F}^{+,1}\|_{L^2 H^N}^2 + \frac{1}{2}\|\til{m}\lam  \sqrt{-\dot{M}M}\n{F}^{+,1}\|_{L^2 H^N}.$$
Therefore, $L_\lam$ can be absorbed into the left-hand side of the energy estimate.

\subsubsection{Nonlinear terms} \label{sec:nonlineartermshi1} 
Recall the energy estimate shorthands defined at the end of Sec.~\ref{outline:reformulation}. We begin with the transport term 
$$\text{NLT} = -\int_{t_1}^{t_2} \int \til{A}\jap{\del}^N \n{F}^1 \til{A} \jap{\del}^N (Z \cdot \del_L F^1)_{\neq},$$
where as described above we have dropped the $\pm$ superscripts and the relative transport between the interacting profiles, as they will not be relevant in the nonlinear terms. We will first control the interaction between the nonzero modes. Using $N'' > 3/2$, (\ref{mlapl}), and the paraproduct decomposition defined in Sec.~\ref{sec:freqdecomp}, we have, for $j \in \{1,3\}$,
\begin{align*}
& |\text{NLT}(j,\neq,\neq)| \le |\text{NLT}^{\te{LH}}(j,\neq,\neq)| + |\text{NLT}^{\te{HL}}(j,\neq,\neq)| \\ 
& \quad \quad \les \int_{t_1}^{t_2} \|\til{A}\n{F}^1\|_{H^N}(\|\lam\n{Z}^j\|_{H^{N''}}\|\til{m}^{1/2}\del_L \n{F}^{1}\|_{H^N} +  \|\lam\n{Z}^j\|_{H^{N}}\|\til{m}^{1/2}\del_L \n{F}^{1}\|_{H^{N''}})dt \\
& \quad \quad\les \|\til{A} \n{F}^1\|_{L^\infty H^N} \delta^{-1} \left(\nu^{-1/3}\|\til{A} \n{F}^j\|_{L^2H^{N''}}\|\til{A}\del_L \n{F}^1\|_{L^2 H^N} + \nu^{-1/3}\|\til{A} \n{F}^j\|_{L^2H^{N}}\|\til{A}\del_L \n{F}^1\|_{L^2 H^{N''}}\right)\\ 
& \quad \quad\les  \epsilon^3\delta^{-1}C_0^3\nu^{-1/3}(\nu^{-1/3}\nu^{-1/6}\nu^{-5/6} + \nu^{-1/3}\nu^{-1/2}\nu^{-1/2}) \les (\epsilon C_0 \nu^{-1/3})^2 \epsilon \nu^{-1} C_0 \delta^{-1},
\end{align*}
which suffices for $\epsilon \nu^{-1} \le c_0 \ll \delta C_0^{-1}$.
In the third line above we have used (\ref{mtrivcomt}) and the fact that $t^s e^{-at} \les_s a^{-s}$ for $a \ge 0$ to deduce that for any $s \ge 0$ there holds
\begin{equation} \label{eq:expdec}
1 = \til{m}^{-s}\lam^{-1} \til{m}^{s} \lam \les \jap{t}^{2s}\lam^{-1}\til{m}^s \lam \les \delta^{-2s} \nu^{-2s/3} \til{m}^s \lam.
\end{equation}
Using (\ref{eq:expdec}) as we have done above to compensate for the fact that $\til{m}$ does not satisfy (\ref{mtrivcom}) will be done frequently throughout the proof and is always possible when estimating a term where two nonzero modes interact in the nonlinearity. When we appeal to (\ref{eq:expdec}) in what follows we will typically do so without any remark, and moreover we will not indicate that it causes the underlying constant to depend on an inverse power of $\delta$. We will also no longer show the factors of $C_0$ that appear when estimating the nonlinear terms. In the case $j = 2$ we use $N' - 1 > 3/2$ and the proof of (\ref{eq:invdamp}) to obtain 
\begin{align*}
|\te{NLT}^\te{LH}(2,\neq,\neq)| & \les \int_{t_1}^{t_2}\|\til{A}\n{F}^1\|_{H^N} \|\jap{t} \lam \n{Z}^{2}\|_{H^{N'-1}}\|\jap{t}^{-1}\del_L \n{F}^1\|_{H^N} dt \\ 
& \les \int_{t_1}^{t_2} \|\til{A} \n{F}^1\|_{H^N} \|J\n{F}^2\|_{H^{N'}}\nu^{-1/3}\|\til{A}\del_L \n{F}^1\|_{H^N}dt \\ 
& \les \|\til{A} \n{F}^1\|_{L^\infty H^N} \|J\n{F}^2\|_{L^2 H^{N'}}\nu^{-1/3}\|\til{A} \del_L \n{F}^1\|_{L^2 H^N} \\ 
& \les \epsilon^3 \nu^{-1/3} \nu^{-1/6} \nu^{-1/3}\nu^{-5/6} = (\epsilon \nu^{-1/3})^2 \epsilon \nu^{-1}.
\end{align*}
For the other term in the decomposition, we have
\begin{align*}
|\te{NLT}^\te{HL}(2,\neq,\neq)| & \les \|\til{A}\n{F}^1\|_{L^\infty H^N} \|\lam\n{Z}^2\|_{L^2 H^N} \nu^{-2/3}\|\til{A}\del_L\n{F}^1\|_{L^2 H^{N''}} \\ 
& \les \epsilon^3 \nu^{-1/3}\nu^{-1/6}\nu^{-2/3}\nu^{-1/2} = (\epsilon \nu^{-1/3})^2 \epsilon \nu^{-1},
\end{align*}
which suffices. Now we turn to the interaction between the zero and nonzero modes. For $\te{NLT}(\neq,0)$ we have 
\begin{align*}
|\te{NLT}(\neq,0)| & \les \|\til{A}\n{F}^1\|_{L^\infty H^N}\|\til{A}\n{F}\|_{L^2 H^{N}}\|\del F_0^1\|_{L^2 H^N}\\ 
& \les \epsilon^3\nu^{-1/3}\nu^{-1/2}\nu^{-5/6} = (\epsilon \nu^{-1/3})^2 \epsilon \nu^{-1},
\end{align*}
while for $\te{NLT}(0,\neq)$ we apply (\ref{mcom}) to obtain
\begin{align*}
|\te{NLT}(0,\neq)| & \les \|\til{A} \n{F}^1\|_{L^2 H^N} \|Z_0\|_{L^\infty H^{N+2}}\|\til{A} \del_L \n{F}^1\|_{L^2 H^N} \\ 
& \les \epsilon^3 \nu^{-1/2} \nu^{-1/3} \nu^{-5/6} = (\epsilon \nu^{-1/3})^2 \epsilon \nu^{-1},
\end{align*}
where we combined bootstrap hypotheses (\ref{eq:high2zero}) and (\ref{eq:zerovelocity}) to deduce the bound on $\|Z_0\|_{H^{N+2}}$. This completes the nonlinear transport estimate. \par 
Next we consider 
$$\te{NLS1}(j) = -\int_{t_1}^{t_2} \int \til{A} \jap{\del}^N \n{F}^1 \til{A} \jap{\del}^N(F^j \djj^L Z^1)_{\neq}dV dt$$
and
$$\te{NLS2}(i,j) = -2\int_{t_1}^{t_2} \int \til{A} \jap{\del}^N\n{F}^1\jap{\del}^N \til{A}(\di^L Z^j \dij^L Z^1)_{\neq}dV dt.$$
We start with NLS1, and as before we begin with the interaction between the nonzero modes. When $j \in \{1,3\}$, we use (\ref{eq:expdec}) and (\ref{mlapl}) to obtain
\begin{align*}
& |\te{NLS1}(j,\neq,\neq)| \le |\te{NLS1}^\te{LH}(j,\neq,\neq)| + |\te{NLS1}^\te{HL}(j,\neq,\neq)|\\
& \quad \quad  \les \|\til{A} \n{F}^1\|_{L^\infty H^N}(\nu^{-2/3}\|\til{A}\n{F}^j\|_{L^2 H^{N''}}\|\til{A}\n{F}^1\|_{L^2 H^N} + \nu^{-2/3}\|\til{A}\n{F}^j\|_{L^2 H^{N}}\|\til{A} \n{F}^1\|_{L^2 H^{N''}}) \\ 
& \quad \quad \les \epsilon^3 \nu^{-1/3} \nu^{-2/3} \nu^{-1/6} \nu^{-1/2} = (\epsilon \nu^{-1/3})^2 \epsilon \nu^{-1},
\end{align*}
which suffices. When $j = 2$ we have 
\begin{align*}
& |\te{NLS1}(2,\neq,\neq)| \le |\te{NLS1}^\te{LH}(2,\neq,\neq)| + |\te{NLS1}^\te{HL}(2,\neq,\neq)| \\ 
&\quad \quad \les \|\til{A}\n{F}^1\|_{L^\infty H^N}(\|\n{F}^2\|_{L^2 H^{N'}}\|\lam \dY^L \n{Z}^1\|_{L^2 H^N} + \|\n{F}^2\|_{L^2 H^{N}}\|\lam \dY^L \n{Z}^1\|_{L^2 H^{N''}}) \\ 
& \quad \quad \les \|\til{A}\n{F}^1\|_{L^\infty H^N}(\nu^{-1/3}\|J\n{F}^2\|_{L^2 H^{N'}}\nu^{-1/3}\|\til{A} \n{F}^1\|_{L^2 H^N} + \nu^{-2/3}\|A\n{F}^2\|_{L^2 H^{N}}\nu^{-1/3}\|\til{A} \n{F}^1\|_{L^2 H^{N''}}) \\
& \quad \quad \les \epsilon^3 \nu^{-1/3}(\nu^{-1/3}\nu^{-1/6}\nu^{-1/3}\nu^{-1/2} + \nu^{-2/3}\nu^{-1/6}\nu^{-1/3}\nu^{-1/6}) \les (\epsilon \nu^{-1/3})^2 \epsilon \nu^{-1}.
\end{align*}
\noindent Now we turn to $\te{NLS1}(j,\neq,0)$, which is only nonzero for $j \in \{2,3\}$. In both of these cases we can control the term with (\ref{mcom}), $\til{m} \le m$, and (\ref{mtrivcom}). Indeed, we have
\begin{align*}
|\te{NLS1}(j,\neq,0)| & \les \|\til{A}\n{F}^1\|_{L^2 H^N} \|\lam m^{1/2} \n{F}^j\|_{L^2 H^N} \|Z_0^1\|_{L^\infty H^{N+2}}\\ 
& \les \|\til{A}\n{F}^1\|_{L^2 H^N} \nu^{-1/3} \|A\n{F}^j\|_{L^2 H^N} \|Z^1_0\|_{L^\infty H^{N+2}} \\ 
& \les \epsilon^3 \nu^{-1/2}\nu^{-1/3}\nu^{-1/2} \nu^{-1/3} = (\epsilon \nu^{-1/3})^2 \epsilon \nu^{-1}.
\end{align*}
Lastly, we have 
$$ |\te{NLS1}(0,\neq)| \les \|\til{A}\n{F}^1\|_{L^2 H^N} \|F_0\|_{L^\infty H^N}\|\til{A}\del_L\n{F}^1\|_{L^2 H^N} \les \epsilon^3 \nu^{-1/2} \nu^{-1/3}\nu^{-5/6} = (\epsilon \nu^{-1/3})^2 \epsilon \nu^{-1}, $$
which completes the estimate of NLS1. For NLS2, the interactions between the nonzero modes can be treated in the same manner as they were for NLS1 by using (\ref{eq:expdec}), (\ref{mlapl}), and paraproduct decompositions. We thus skip these terms for the sake of brevity. Turning then to the interaction between the zero and the nonzero modes, we have
\begin{align*}
    |\te{NLS}2(i,j,\neq,0)| &\les \|\til{A}\n{F}^1\|_{L^2 H^N} \nu^{-1/3}\|A\n{F}^j\|_{L^2 H^N} \|Z_0^1\|_{L^\infty H^{N+2}} \\ 
    & \les \epsilon^3 \nu^{-1/2}\nu^{-1/3}\nu^{-1/2}\nu^{-1/3} = (\epsilon \nu^{-1/3})^2 \epsilon \nu^{-1}, 
    \end{align*}
where we noted that we can apply (\ref{mtrivcom}) since the term is only nonzero for $j \neq 1$. Lastly, we have, using (\ref{mcom}), 
\begin{align*}
|\te{NLS2}(i,j,0,\neq)| & \les \|\til{A} \n{F}^1\|_{L^2 H^N}\|Z_0\|_{L^\infty H^{N+2}}\|\til{m}^{1/2}\lam \n{F}^1\|_{L^2 H^N} \\ 
& \les \|\til{A} \n{F}^1\|_{L^2 H^N}\|Z_0\|_{L^\infty H^{N+2}}\|\til{A}\del_L\n{F}^1\|_{L^2 H^N} \\ 
& \les \epsilon^3 \nu^{-1/2}\nu^{-1/3}\nu^{-5/6} = (\epsilon \nu^{-1/3})^2 \epsilon \nu^{-1},
\end{align*}
which completes the treatment of the nonlinear stretching terms.\par
Now we consider the nonlinear pressure, which by an integration by parts can be written
\begin{align*}
    \te{NLP}(i,j) &= \int_{t_1}^{t_2} \int \jap{\del}^N\til{A} \n{F}^1 \jap{\del}^N\til{A}\dX(\djj^L Z^i \di^L Z^j)_{\neq} dV dt \\ 
    & = -\int_{t_1}^{t_2} \int \dX\jap{\del}^N \til{A} \n{F}^1 \jap{\del}^N\til{A}(\djj^L Z^i \di^L Z^j)_{\neq} dV dt.
\end{align*}
There are three distinct cases to consider: $i,j \in \{1,3\}$, $i = 2$ and $j \neq 2$, and $i = j = 2$. For the first case, we notice that due to the symmetry in the bootstrap hypotheses for $F^1$ and $F^3$ we can assume without loss of generality that $Z^j$ has a nonzero $X$-frequency. Then, we have the estimate 
\begin{align*}
|\te{NLP}(i \in \{1,3\},j \in \{1,3\})| & \les \|\del_L \til{A} \n{F}^1\|_{L^2 H^N}\|\djj^L Z^i\|_{L^\infty H^N}\|\til{A}\n{F}^j\|_{L^2 H^{N}} \\
& \les \|\del_L \til{A} \n{F}^1\|_{L^2 H^N}\left(\|\til{A}\n{F}^i\|_{L^\infty H^N} + \|Z_0^i\|_{L^\infty H^{N+2}}\right) \|\til{A}\n{F}^j\|_{L^2 H^N}\\
& \les \epsilon^3 \nu^{-5/6}\nu^{-1/3}\nu^{-1/2} = (\epsilon \nu^{-1/3})^2 \epsilon \nu^{-1}.
\end{align*} 
Turning now to the case $i = 2$ and $j \neq 2$, we first consider when $j = 3$. Splitting the term between the different frequency interactions gives
\begin{align*}|\te{NLP}(2,3)| & \le |\te{NLP}(2,3,0,\neq)| + |\te{NLP}(2,3,\neq,0)| + |\te{NLP}(2,3,\neq,\neq)| \\ 
& := |\te{NLP}(2,3,0,\neq)| + \te{NLP}(2,3,\neq,\cdot).
\end{align*}
For $\te{NLP}(2,3,0,\neq)$ we use (\ref{mcom}) and (\ref{mlapl}) to obtain
\begin{align*}
    |\te{NLP}(2,3,0,\neq)| &\les \|\del_L \til{A} \n{F}^1\|_{L^2 H^N}\|\dZ Z_0^2\|_{L^\infty H^{N+1}} \|\lam \til{m}^{1/2}\dY^L\n{Z}^3\|_{L^2 H^N} \\ 
    & \les \|\del_L \til{A} \n{F}^1\|_{L^2 H^N}\|Z_0^2\|_{L^\infty H^{N+2}} \|A\n{F}^3\|_{L^2 H^N} \\
    & \les \epsilon^3 \nu^{-5/6}\nu^{-1/3}\nu^{-1/2} = \epsilon \nu^{-1} (\epsilon \nu^{-1/3})^2.
\end{align*}
We then estimate the other two pieces using (\ref{mtrivcom}):
\begin{align*}
    \te{NLP}(2,3,\neq,\cdot) & \les \|\del_L \til{A} \n{F}^1\|_{L^2 H^N}\|A \n{F}^2\|_{L^2 H^N}\left( \|Z_0^3\|_{L^\infty H^{N+2}} + \nu^{-1/3}\|A \n{F}^{3}\|_{L^\infty H^N}\ \right) \\ 
    & \les \epsilon^3 \nu^{-5/6}\nu^{-1/6}\nu^{-2/3} = (\epsilon \nu^{-1/3})^2 \epsilon \nu^{-1}.
\end{align*}
A similar estimate holds for $\te{NLP}(2,1)$ due to the fact that $\te{NLP}(2,1,0,\neq) = 0$. The only variation is that we must use (\ref{eq:expdec}) for the interaction between the nonzero modes because $\til{m}$ does not satisfy (\ref{mtrivcom}). We omit the details. Lastly, we consider the case $i = j = 2$, for which we have the estimate
\begin{align*}
|\te{NLP}(2,2)| & \les \|\del_L \til{A}\n{F}^1\|_{L^2 H^N}\|\lam \dY^L \n{Z}^2\|_{L^2 H^N} \|\dY^L Z^2\|_{L^\infty H^N} \\ 
& \les \|\del_L\til{A}\n{F}^1\|_{L^2 H^N}\nu^{-1/3}\|A\n{F}^2\|_{L^2 H^N}(\|Z_0^2\|_{L^\infty H^{N+2}} + \nu^{-1/3}\|A \n{F}^2\|_{L^\infty H^N}) \\ 
& \les \epsilon^3 \nu^{-5/6} \nu^{-1/3} \nu^{-1/6}\nu^{-1/3} = (\epsilon \nu^{-1/3})^2 \epsilon \nu^{-1},
\end{align*}
which suffices and completes the estimate of $\n{F}^{\pm,1}$ in the high norm. 

\subsection{Estimate of $\n{Q}^2$ and $\n{H}^2$} \label{sec:Hi2nonzero}
In this section we improve (\ref{eq:high2nonzero1}) and~(\ref{eq:high2nonzero2}). In particular, we need to verify that with just $\epsilon \ll \nu$ the high norm controls on $\n{H}^2$ and $\n{Q}^2$, unlike (\ref{eq:high1}) and (\ref{eq:high3}), do not need to lose a factor $\nu^{-1/3}$. This will be possible because in the low norm $\n{H}^2$ and $\n{Q}^2$ only grow linearly in time, as opposed to quadratically like $\n{F}^1$ and $\n{F}^3$. Exploiting the gain of one power of $\jap{t}$ does not require us to dramatically alter the structure of the estimates carried out in Sec.~\ref{sec:nonlineartermshi1}. For the sake of brevity the only nonlinear terms we will estimate in detail are NLS1 and NLP. \par 
Using the cancellation 
$$\int A \jap{\del}^N \n{Q}^2 \db (A \jap{\del}^N \n{H}^2)dV + \int A \jap{\del}^N \n{H}^2 \db (A \jap{\del}^N \n{Q}^2)dV = 0$$
and absorbing the term arising from the time derivative landing on $\lambda$ as in Sec.~\ref{sec:lamderiv}, we derive the energy estimate
\begin{align*}
&\frac{1}{2}\|A \n{Q}^2(t_2)\|_{H^N}^2  + \frac{1}{2}\|A \n{H}^2(t_2)\|_{H^N}^2 + \frac{\nu}{2}\|A\del_L \n{Q}^2\|^2_{L^2H^N} + \frac{\nu}{2}\|A\del_L \n{H}^2\|^2_{L^2H^N} \\ 
& + \|M \lambda \sqrt{-\dot{m}m} \n{H}^2\|^2_{L^2 H^N} + \|M \lambda \sqrt{-\dot{m}m} \n{Q}^2\|^2_{L^2 H^N} \\
& + \frac{1}{2}\|m\lambda \sqrt{-\dot{M}M}\n{Q}^2\|^2_{L^2H^N} + \frac{1}{2}\|m\lambda \sqrt{-\dot{M}M}\n{H}^2\|^2_{L^2H^N}\\ 
& \le \frac{1}{2}\|A\n{Q}^2(t_1)\|_{H^N}^2 + \frac{1}{2}\|A\n{H}^2(t_1)\|_{H^N}^2 -2\int_{t_1}^{t_2} \inp{A\n{H}^2}{A\dXY^L\lap_L^{-1}\n{H}^2}_{H^N} dt \\ 
& - \int_{t_1}^{t_2}\inp{A\n{H}^2}{A(U\cdot \del_L H^2 - B \cdot \del_L Q^2)}_{H^N}dt -\int_{t_1}^{t_2} \inp{A\n{H}^2}{A(Q\cdot \del_L B^2 - H \cdot \del_L U^2)}_{H^N} dt \\
& -2\int_{t_1}^{t_2} \inp{A\n{H}^2}{A(\di^L U^j \dij^L B^2 - \di^L B^j \dij^L U^2)}_{H^N}dt + \text{similar terms from }\inp{A\n{Q}^2}{A\dt Q^2}_{H^N}\\ 
& + \int_{t_1}^{t_2} \inp{A\n{Q}^2}{A\dY^L (\djj^L U^i \di^L U^j - \djj^L B^i \di^L B^j)}_{H^N}dt \\ 
& = \frac{1}{2}\|A\n{Q}^2(t_1)\|_{H^N}^2 + \frac{1}{2}\|A\n{H}^2(t_1)\|_{H^N}^2 + \te{LS} + \te{NLT} + \te{NLS1} + \te{NLS2} + \te{``similar terms''} + \te{NLP}, 
\end{align*}
where for the nonlinear stretching and transport terms we have written, for example, $\te{NLT}$ to refer to both $-U \cdot \del_L H^2$ and $B \cdot \del_L  Q^2$ (each abbreviation above is given its own integral sign). Moreover, ``similar terms from $\inp{A \n{Q}^2}{A\dt Q^2}_{H^s}$'' corresponds to, excluding the nonlinear pressure term that is written in the second to last line above, the nonlinear terms arising in an $H^N$ energy estimate of $A \n{Q}^2$. Since $H^2$ and $Q^2$ satisfy the same estimates, it follows that these similar terms can all be controlled following the same methods that we employ below in Sec.~\ref{sec:Hi2nl}, and so we will omit them.  \par 

\subsubsection{Linear stretching term} \label{sec:LSH2Hi}
From the definition of $m$ we have $$\te{LS} \le \|\sqrt{-\dot{m}m}M\lam \n{H}^2\|_{L^2 H^N}^2 + \frac{\nu}{32}\|\del_L A \n{H}^2\|_{L^2 H^N},$$
and hence the linear stretching term can be absorbed into the left-hand side of the energy estimate at the cost of changing the factor of $1/2$ multiplying $\nu\|A \del_L \n{H}^2\|_{L^2 H^N}^2$ into a $15/32$. This is consistent with improving (\ref{eq:high2nonzero1}).

\subsubsection{Nonlinear terms} \label{sec:Hi2nl}
As described above, the only nonlinear terms that we estimate in detail are NLS1 and NLP. We begin with the stretching term and observe that since $Q$ and $H$ satisfy the same estimates it suffices to simply write
$$\te{NLS1} = -\int_{t_1}^{t_2} \int \DN A\n{H}^2 \DN A(Q \cdot \del_L B^2)dV dt.$$
We first consider the interaction between the nonzero modes. When $j \in \{1,3\}$ we have
\begin{align*}
|\te{NLS1}^\te{LH}(j,\neq,\neq)| & \les \|A \n{H}^2\|_{L^\infty H^N}\nu^{-2/3}\|\til{A}\n{Q}^j\|_{L^2 H^{N''}}\|A \n{H}^2\|_{L^2 H^N} \\ 
& \les \epsilon^3 \nu^{-2/3} \nu^{-1/6} \nu^{-1/6} = \epsilon^3 \nu^{-1},
\end{align*}
and, using $N' - 1 > 3/2$, (\ref{mtrivcomt}), and $|\jap{t}\del_{X,Z}| \les \jap{\del} m^{1/2} |\lap_L|$ ,
\begin{align*}
|\te{NLS1}^\te{HL}(j,\neq,\neq)| &\les \int_{t_1}^{t_2} \|A \n{H}^2\|_{H^N} \|\jap{t}^{-1}\n{Q}^j\|_{H^{N}}\|\lam \jap{t} \djj^L  \n{B}^2\|_{H^{N' - 1}} dt \\ 
& \les \|A \n{H}^2\|_{L^\infty H^N} \nu^{-1/3}\|\til{A}\n{Q}^j\|_{L^2 H^N} \|J \n{H}^2\|_{L^2 H^{N'}} \\ 
& \les \epsilon^3 \nu^{-1/3}\nu^{-1/2} \nu^{-1/6} = \epsilon^3 \nu^{-1}.
\end{align*}
For $j = 2$ we use to (\ref{mtrivcom}) to obtain 
\begin{align*}
|\te{NLS1}(2,\neq,\neq)| &\le |\te{NLS1}^\te{LH}(2,\neq,\neq)| + |\te{NLS1}^\te{HL}(2,\neq,\neq)| \\ 
& \les \|A \n{H}^2\|_{L^\infty H^N}\left(\|\n{Q}^2\|_{L^2 H^{N'}}\|\lam \dY^L \n{B}^2\|_{L^2 H^N} + \|\n{Q}^2\|_{L^2 H^{N}}\|\lam \dY^L \n{B}^2\|_{L^2 H^{N'}}\right) \\ 
& \les \|A \n{H}^2\|_{L^\infty H^N}\nu^{-2/3}\left(\|J\n{Q}^2\|_{L^2 H^{N'}}\|A \n{H}^2\|_{L^2 H^N} + \|A\n{Q}^2\|_{L^2 H^{N}}\|J \n{H}^2\|_{L^2 H^{N'}}\right) \\
& \les \epsilon^3\nu^{-2/3}\nu^{-1/6}\nu^{-1/6} = \epsilon^3 \nu^{-1}.
\end{align*}
Now we turn to $\te{NLS1}(j,\neq,0)$, which is only nonzero when $j \neq 1$. We have, from (\ref{mtrivcom}) and (\ref{mcom}), 
\begin{align*}
|\te{NLS1}(j,\neq,0)| & \le |\te{NLS1}^\te{HL}(j,\neq,0)|  + |\te{NLS1}^\te{LH}(j,\neq,0)| \\
& \les \|A \n{H}^2\|_{L^2 H^N} \nu^{-1/3}\left(\|A\n{Q}^j\|_{L^2 H^N} \|B^2_0\|_{L^\infty H^{N}} +  \|A\n{Q}^j\|_{L^2 H^{N''}} \|B^2_0\|_{L^\infty H^{N+2}}\right)\\ 
& \les \epsilon^3 \nu^{-1/6}\nu^{-1/3}(\nu^{-1/2} + \nu^{-1/6}\nu^{-1/3}) = \epsilon^3 \nu^{-1}.
\end{align*}
The fact that $\te{NLS}(1,\neq,0) = 0$ is crucial, as the method just used for $j \in \{2,3\}$ would fail since we would be forced to bound $\|m\n{F}^1\|_{H^N}$, which we do not have control on due to $m \ge \til{m}$. Lastly, for $\te{NLS}1(0,\neq)$ there holds
$$ |\te{NLS1}(0,\neq)| \les \|A\n{H}^2\|_{L^2 H^N} \|Q_0\|_{L^\infty H^N} \nu^{-1/3}\|A\n{H}^2\|_{L^2 H^N} \les \epsilon^3 \nu^{-1/6} \nu^{-1/3}\nu^{-1/3}\nu^{-1/6} = \epsilon^3 \nu^{-1}. $$ 
This completes the estimate of NLS1. \par 
\begin{remark}\label{rem:nonlinearstructure}
The structure that caused $\te{NLS}(1,\neq,0)$ to vanish is not unique to the term. Rather, it comes from the general $Z^j \djj^L$ structure of the nonlinearity that causes $Z^1$ to always be paired with an $X$ derivative. This cancellation is used in many of the $(\neq,0)$ estimates that we omit for $\n{F}^2$ and $\n{F}^3$, and is important for the proof to work.
\end{remark}
We turn now to the nonlinear pressure term. It reads
\begin{align*}
    \te{NLP}(i,j) &= \int_{t_1}^{t_2} \int A\jap{\del}^N \n{H}^2 A\jap{\del}^N\dY^L(\djj^L U^i \di^L U^j)_{\neq} dV dt \\
    & = -\int_{t_1}^{t_2} \int \dY^L A\jap{\del}^N \n{H}^2 A\jap{\del}^N(\djj^L U^i \di^L U^j) dV dt.
\end{align*}
We begin with the interaction between the zero and nonzero modes. By symmetry, it suffices to consider $\te{NLP}(i,j,\neq,0)$, which we note is only nonzero for $i \neq 1$. We have 
\begin{align*}
    |\te{NLP}(i,j,\neq,0)| & \le |\te{NLP}^{\te{HL}}(i,j,\neq,0)| + |\te{NLP}^{\te{LH}}(i,j,\neq,0)| \\ 
    & \les \|A \del_L \n{H}^2\|_{L^2 H^N}\left(\|A \n{Q}^i\|_{L^2 H^N}\|U_0^j\|_{L^\infty H^N} + \|A \n{Q}^i\|_{L^2 H^{N''}}\|U_0^j\|_{L^\infty H^{N+2}}\right) \\ 
    & \les \epsilon^3 \nu^{-1/2}(\nu^{-1/2} + \nu^{-1/6} \nu^{-1/3})  \les \epsilon^3 \nu^{-1},
\end{align*}
which is consistent. Note that in the related papers \cite{BGM15I,WZ18} the term $\te{NLP}(3,1,\neq,0)$ is the leading order piece of the nonlinearity, but for us this term is relatively easy due to the suppression of the lift up effect for $Q^1_0$. For the interaction between the nonzero modes there are, as before, three distinct cases to consider: $i,j \in \{1,3\}$, $i = 2$ and $j \neq 2$, and $i = j = 2$. We first consider when $i,j \in \{1,3\}$. By symmetry, we only need to control the HL interaction. We have
\begin{align*}
|\te{NLP}^\te{HL}(i \in\{1,3\},j\in\{1,3\},\neq,\neq)| & \les \|\del_L A \n{H}^2\|_{L^2 H^N} \|\til{A} \n{Q}^i\|_{L^\infty H^{N}}\|\til{A}\n{Q}^j\|_{L^2 H^{N''}} \\ 
& \les \epsilon^3 \nu^{-1/2}\nu^{-1/3}\nu^{-1/6} = \epsilon^3 \nu^{-1}.
\end{align*}
Turning to the second case, we treat the LH interaction using Lemma~\ref{lemma:boot}, (\ref{mtrivcomt}), and (\ref{mlapl}):
\begin{align*}
|\te{NLP}^\te{LH}(i = 2, j \neq 2, \neq, \neq)| & \les \|\del_L A \n{H}^2\|_{L^2H^N} \|\jap{t}\del_{X,Z}\n{U}^2\|_{L^\infty H^{N'-1}} \|\til{A} \n{Q}^j\|_{L^2H^N} \\ 
& \les \epsilon^3 \nu^{-1/2} \nu^{-1/2} = \epsilon^3 \nu^{-1}.
\end{align*}
For the HL term we have
\begin{align*}
|\te{NLP}^\te{HL}(i = 2,j \neq 2,\neq,\neq)| & \les \|\del_L A \n{H}^2\|_{L^2 H^N}\|A \n{Q}^2\|_{L^\infty H^N} \nu^{-1/3}\|\til{A} \n{Q}^j\|_{L^2 H^{N''}} \\ 
& \les \epsilon^3 \nu^{-1/2}\nu^{-1/3}\nu^{-1/6} = \epsilon^3 \nu^{-1}.
\end{align*}
Lastly we consider $i = j = 2$. It suffices to consider only the HL term, for which we have the estimate
$$|\text{NLP}^\te{HL}(2,2,\neq,\neq)| \les \|A \del_L \n{H}^2\|_{L^2 H^N} \nu^{-1/3}\|A\n{Q}^2\|_{L^2 H^N} \|J \n{Q}^2\|_{L^\infty H^{N'}} \les \epsilon^3 \nu^{-1}.$$
This completes the estimate of $\n{H}^2$ and $\n{Q}^2$ in the high norm.

\subsection{Estimate of $\n{F}^3$} \label{sec:high3}
Improving (\ref{eq:high3}) follows from essentially the same methods used in Sec.~\ref{sec:high1}. To see this, first note that, disregarding the lift-up term, $F^3$ satisfies the same equation as $F^1$ except for the presence of $\dZ$ instead of $\dX$ in the pressure terms. This is inconsequential in the estimates. For example, the linear pressure is simply controlled with (\ref{eq:ghost2}) instead of (\ref{eq:ghost1}). The only other variations are due to the use of $m$ instead of $\til{m}$ in the norm and were already encountered in the estimate of $\n{F}^2$. In particular, we treat the linear stretching term as in Sec.~\ref{sec:LSH2Hi}, and we rely on the crucial nonlinear structure just noted above in Remark~\ref{rem:nonlinearstructure} and the estimate of $\te{NLS}1(\neq,0)$.

\subsection{Summary of high norm nonzero mode interactions}

For the sake of clarity in the remainder of the paper it is useful to gather the above calculations into a general lemma. Let $(\partial_t F^j)_{\mathcal{NL}}$ denote the nonlinear terms in $\dt F^j$. We then also define $(\partial_t F^j)_{\mathcal{NL}}^{\neq \neq}$ and $(\partial_t F^j)_{\mathcal{NL}}^{00}$ to denote $(\partial_t F^j)_{\mathcal{NL}}$ restricted to either the interaction between the nonzero modes or the interaction between the zero modes. \par 
Our estimates of the $(\neq,\neq)$ nonlinear interactions in Secs.~\ref{sec:high1} --~\ref{sec:high3} only relied on the enhanced dissipation of the functions in the particular nonlinear term (i.e., we did not use the enhanced dissipation of the function that plays the role of $G$ in Lemma~\ref{lemhighnonzero} below). Moreover, we did not employ any commutator type estimates for $m$ and $\til{m}$ other than (\ref{mlapl}), (\ref{mtrivcomt}), and (\ref{mtrivcom}). In particular, we did not use (\ref{mcom}). Due to these observations, our calculations above yield the following lemma.

\begin{lemma} \label{lemhighnonzero}
Let $\beta \ge 0$ and suppose that $G$ is a smooth function that satisfies 
$$\|G\|_{L^\infty H^s} + \nu^{1/2}\|\del_L G\|_{L^2 H^s} \les \epsilon \nu^{-\beta} $$
for some $s \le N$. Then, for any bounded Fourier multiplier $\mathcal{M}$ there holds 
\begin{align}
    \left| \int_{t_1}^{t_2} \inp{\lam \mathcal{M}(\dt F^j)_{\mathcal{NL}}^{\neq \neq}}{G}_{H^s} dt\right| & \les \epsilon^3 \nu^{-4/3 - \beta} \quad (j \in \{1,3\}),  \\ 
    \left| \int_{t_1}^{t_2} \inp{\lam \mathcal{M}(\dt F^2)_{\mathcal{NL}}^{\neq \neq}}{G}_{H^s} dt\right| & \les \epsilon^3 \nu^{-1 - \beta }.
\end{align}
\end{lemma}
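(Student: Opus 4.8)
The plan is to read the two bounds off the $(\neq,\neq)$ nonlinear estimates already carried out in Secs.~\ref{sec:high1}--\ref{sec:high3}. Since $\mathcal{M}$ is bounded and commutes with $\del_L$ and with $\jap{\del}^s$, we may move it onto $G$ (at worst replacing it by its $L^2$-adjoint, still a bounded multiplier), so that $\inp{\lam\mathcal{M}(\dt F^j)_{\mathcal{NL}}^{\neq \neq}}{G}_{H^s}$ is replaced by an inner product of the same form with $\mathcal{M}=1$ and with $G$ obeying the same hypothesis. Hence it suffices to bound $\int_{t_1}^{t_2}\inp{\lam(\dt F^j)_{\mathcal{NL}}^{\neq \neq}}{G}_{H^s}\,dt$.

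Now recall that $(\dt F^j)_{\mathcal{NL}}$ is the sum of NLT, NLS1, NLS2 and NLP, and that each of their $(\neq,\neq)$ pieces was estimated in Secs.~\ref{sec:high1} and~\ref{sec:high3} (for $j\in\{1,3\}$) and, for $j=2$, via the $\n{Q}^2$, $\n{H}^2$ computations of Sec.~\ref{sec:Hi2nonzero} --- the latter applies because $\n{F}^{\pm,2}$ is a fixed linear combination of $T_a^t$-conjugates of $\n{Q}^2$ and $\n{H}^2$, and $T_a^t$ preserves $H^s$ norms and commutes with $\del_L$ and with every Fourier multiplier in play. In all cases the estimate followed one recipe: apply the paraproduct decomposition; place the factor carrying the top-order bootstrap weight --- precisely the factor whose role the lemma now assigns to $G$, namely $\til{A}\n{F}^{\pm,1}$, $A\n{F}^{\pm,3}$, $A\n{H}^2$ or $A\n{Q}^2$ --- in $L^\infty H^N$, except in NLP, where one first integrates by parts in $X$, $Z$ or $Y$ so the derivative falls on that factor and then uses $|\dX|,|\dZ|,|\dY^L|\le|\del_L|$ to bound it by $\|\del_L(\cdot)\|_{L^2 H^N}$; and control the remaining one or two factors --- all genuine solution components --- via Lemma~\ref{lemma:boot} and the bootstrap hypotheses, distributing the ambient weight using only $\til{m}M\le1$ (resp.\ $mM\le1$), (\ref{mlapl}), (\ref{mtrivcomt}), (\ref{mtrivcom}) and (\ref{eq:expdec}). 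As stressed in the discussion preceding the lemma, these estimates used neither the enhanced dissipation of the outer factor nor the commutator inequality (\ref{mcom}). Tracking powers of $\nu$: for $j\in\{1,3\}$ the outer factor contributed $\les\epsilon\nu^{-1/3}$ (via $L^\infty H^N$) for NLT, NLS1, NLS2 and $\les\epsilon\nu^{-5/6}$ (via $\|\del_L(\cdot)\|_{L^2 H^N}$) for NLP, while the remaining factors contributed $\les\epsilon^2\nu^{-4/3}$ resp.\ $\les\epsilon^2\nu^{-5/6}$; for $j=2$ the outer factor contributed $\les\epsilon$ resp.\ $\les\epsilon\nu^{-1/2}$ and the remaining factors $\les\epsilon^2\nu^{-1}$ resp.\ $\les\epsilon^2\nu^{-1/2}$.

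To conclude one reruns each estimate with the outer factor replaced by $G$ and the weight $\til{A}$ (resp.\ $A$) on the nonlinear term replaced by $\lam$, a function of $t$ only. Since $\|G\|_{L^\infty H^s}\les\epsilon\nu^{-\beta}$, $\nu^{1/2}\|\del_L G\|_{L^2 H^s}\les\epsilon\nu^{-\beta}$ and $s\le N$, the factor $G$ satisfies exactly the estimates used on the outer factor: $\|G\|_{L^\infty H^s}\les\epsilon\nu^{-\beta}$ for the transport/stretching terms and $\|\del_L G\|_{L^2 H^s}\les\epsilon\nu^{-1/2-\beta}$ for NLP --- i.e.\ $\nu^{-\beta}$ now replaces $\nu^{-1/3}$ (for $j\in\{1,3\}$) or $\nu^0$ (for $j=2$), and $\nu^{-1/2-\beta}$ replaces $\nu^{-5/6}$ (for $j\in\{1,3\}$) or $\nu^{-1/2}$ (for $j=2$). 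The weight-transfer and remaining-factor estimates are untouched, because they only ever used the pointwise multiplier bounds above, which hold with $\lam$ in place of $\til{A}$ or $A$ on the nonlinear term, and never used (\ref{mcom}) or the outer factor's enhanced dissipation. Multiplying the new outer-factor bound by the unchanged remaining-factor bound yields $\les\epsilon^3\nu^{-4/3-\beta}$ when $j\in\{1,3\}$ and $\les\epsilon^3\nu^{-1-\beta}$ when $j=2$.

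The proof is thus purely bookkeeping; the one step that requires care is confirming that every one of the roughly a dozen individual $(\neq,\neq)$ sub-estimates in Secs.~\ref{sec:high1}--\ref{sec:high3} genuinely conforms to this recipe --- in particular that none quietly used (\ref{mcom}) or the enhanced dissipation of the factor now named $G$, and that the $j=2$ bound comes from the $\n{Q}^2$, $\n{H}^2$ estimates rather than anything special to $\n{F}^{\pm,2}$. Since those estimates were written in exactly this form, no new computation is needed.
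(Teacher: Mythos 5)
Your proposal is correct and is essentially the paper's own argument: the paper gives no separate proof of Lemma~\ref{lemhighnonzero} but derives it exactly as you do, by observing that every $(\neq,\neq)$ estimate in Secs.~\ref{sec:high1}--\ref{sec:high3} placed the outer factor in $L^\infty H^N$ (or, for NLP, in $\|\del_L(\cdot)\|_{L^2 H^N}$ after integrating by parts), never used (\ref{mcom}) or the outer factor's enhanced dissipation, and bounded the weight on the nonlinear term only through $\til m M\le 1$ (resp.\ $mM\le1$), (\ref{mlapl}), (\ref{mtrivcomt}), (\ref{mtrivcom}) and (\ref{eq:expdec}). Your $\nu$-power bookkeeping and your reduction of the $j=2$ case to the $\n{Q}^2$, $\n{H}^2$ computations also match the intended reading.
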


\subsection{Estimate of $Q_0$ and $H_0$}
In this section we improve (\ref{eq:high2zero}). For any $r \in \{1,2,3\}$, an energy estimate gives 
\begin{align*}
& \frac{1}{2}\|F_0^{+,r}(t_2)\|^2_{H^N}  + \nu \|\del F_0^{+,r}\|_{L^2 H^N}^2  = \frac{1}{2}\|F_0^{+,r}(t_1)\|_{H^N}^2 - \ind_{r = 1} \int_{t_1}^{t_2} \inp{F_0^{+,1}}{T_{2\alpha}^t F_0^{-,2}}_{H^N}dt \\ 
& - \int_{t_1}^{t_2} \inp{F^{+,r}_0} {T_{2\alpha}^t Z^{-} \cdot \del_L F^{+,r}}_{H^N}dt
 - \int_{t_1}^{t_2} \inp{F^{+,r}_0}{T_{2\alpha}^t F^{-} \cdot \del_L Z^{+,r}}_{H^N}dt \\
& - \int_{t_1}^{t_2} \inp{F^{+,r}_0}{T_{2\alpha}^t \di^L Z^{-,j} \dij^L Z^{+,r}}_{H^N}dt 
+ \ind_{r \neq 1}\int_{t_1}^{t_2} \inp{F_0^{+,r}} {\partial_r (T_{2\alpha}^t\djj^L Z^{-,i} \di^L Z^{+,j})}_{H^N}dt \\
& \quad = \frac{1}{2}\|F_0^{+,r}(t_1)\|_{H^N}^2 + \te{LU} + \te{NLT} + \te{NLS1} + \te{NLS2} + \te{NLP}.
\end{align*}

\subsubsection{Nonlinear terms} \label{sec:nonlinearhizero}
For the interaction between the nonzero modes we have, by Lemma~\ref{lemhighnonzero} and (\ref{eq:high2zero}), 
$$\left|\int_{t_1}^{t_2} \inp{F_0^r}{(\dt F^r)_{\mathcal{NL}}^{\neq \neq}}_{H^N} dt \right| \les (\epsilon \nu^{-1/3})^2 \epsilon \nu^{-1},$$
which is consistent. It then only remains to consider the interaction between the zero modes. We begin with the transport term
$$\te{NLT}(j,0,0) = -\int_{t_1}^{t_2} \int \DN F_0^{+,r} \DN (Z_0^{-,j}\djj F_0^{+,r})dV dt.$$
When $j = 2$ we use that incompressibility implies that $Z_0^2$ always has a nonzero $Z$-frequency to obtain 
$$|\te{NLT}(2,0,0)| \les \|F_0^r\|_{L^\infty H^N} \|\del Z_0^2\|_{L^2 H^N} \|\del F_0^r\|_{L^2 H^N} \les \epsilon^3 \nu^{-1/3} \nu^{-1/2} \nu^{-5/6} = (\epsilon \nu^{-1/3})^2 \epsilon \nu^{-1}.$$
For $j = 3$ we observe that the term vanishes unless at least one of $Z_0^3$ or $F_0^r$ has a nonzero $Z$-frequency. Hence,
\begin{align*}
|\te{NLT}(3,0,0)| &\les \|F_0^r\|_{L^\infty H^N} \|\del Z_0^3\|_{L^2 H^N} \|\dZ F_0^r\|_{L^2 H^N} + \|\del F_0^r\|_{L^2 H^N} \|Z_0^3\|_{L^\infty H^N} \|\dZ F_0^r\|_{L^2 H^N} \\ 
& \les \epsilon^3(\nu^{-1/3}\nu^{-1/2}\nu^{-5/6} + \nu^{-5/6}\nu^{-5/6}) \les (\epsilon \nu^{-1/3})^2 \epsilon \nu^{-1},
\end{align*}
which suffices and completes the estimate of the transport nonlinearity. Using incompressibility and $\dZ = \dZ P_{l \neq 0}$ in a similar fashion as above, both of the stretching terms are treated in essentially the same way as the transport term.
We thus skip them and turn to the pressure, for which, after an integration by parts, we have the estimate
$$|\te{NLP}(i,j,0,0)| \les \|\del F_0^r\|_{L^2 H^N} \|\del Z^j_0\|_{L^2 H^N} \|Z^i_0\|_{L^\infty H^{N+2}} \les \epsilon^3 \nu^{-5/6}\nu^{-1/2}\nu^{-1/3} = (\epsilon \nu^{-1/3})^2 \epsilon \nu^{-1}.$$
This completes the high norm estimate of the nonlinear terms for $F_0^r$, $r \in \{1,2,3\}$. \par  

The computations in this section were not sensitive to the component of $F_0$ being estimated. In fact, the estimates of NLT, NLS1, and NLS2, which are each quadratic in the $r$ component, do not even rely on any structures that would cause the same methods to fail if the two occurrences of $r$ were replaced by $r$ and some $r'$. This generality gives us the following lemma, which will be useful when considering the nonlinear terms that arise in our treatment of the lift-up effect with integration by parts in time.

\begin{lemma} \label{lemhighzero}
Let $r \in \{1,2,3\}$ and suppose that $G$ is a smooth function that satisfies
$$\|G\|_{L^\infty H^N} + \nu^{1/2}\|\del G\|_{L^2 H^N} \les \epsilon \nu^{-1/3}.$$
Then, there holds
$$\left|\int_{t_1}^{t_2} \inp{G}{(\dt F^{r})_{\mathcal{NL}}^{00}}_{H^N}dt\right| \les \epsilon^3 \nu^{-5/3}.$$
\end{lemma}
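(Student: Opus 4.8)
The plan is to reduce Lemma~\ref{lemhighzero} to the computations already carried out in Section~\ref{sec:nonlinearhizero}, where the special case $G = F_0^r$ was handled. First I would expand $(\dt F^r)_{\mathcal{NL}}^{00}$ into its constituent zero--zero pieces: the nonlinear transport $\te{NLT}(j,0,0) = -Z_0^j \djj F_0^r$, the two nonlinear stretching families $\te{NLS1}(j,0,0) = -F_0^j \djj Z_0^r$ and $\te{NLS2}(i,j,0,0) = -\di Z_0^j \dij Z_0^r$, and the nonlinear pressure $\te{NLP}(i,j,0,0) = \partial_r(\djj Z_0^i \di Z_0^j)$, which is present only for $r \in \{2,3\}$ since the $\dX$ appearing in the $\n{F}^1$ pressure kills the zero mode; there is no lift--up contribution since that term is linear. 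Pairing each piece against $G$ in $H^N$ and integrating by parts once in the pressure term leaves a finite collection of trilinear space--time integrals in which $G$ occupies the slot previously held by the test function $F_0^r$.

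The key point is that the only properties of $G$ used in those estimates are its two bootstrap--strength bounds, $\|G\|_{L^\infty H^N} \les \epsilon\nu^{-1/3}$ and $\nu^{1/2}\|\del G\|_{L^2 H^N} \les \epsilon\nu^{-1/3}$, which are exactly the hypotheses and are precisely the bounds that $F_0^r$ itself enjoys: since $F_0^\pm = T_{\pm\alpha}^t(Q_0 \mp H_0)$ and $T_{\pm\alpha}^t$ preserves $H^s$ norms, estimate (\ref{eq:high2zero}) applies to $F_0^r$, while (\ref{eq:zerovelocity}) gives $\|Z_0\|_{L^\infty H^N} + \nu^{1/2}\|\del Z_0\|_{L^2 H^N} \les \epsilon$ and, in combination with (\ref{eq:high2zero}), $\|Z_0\|_{L^\infty H^{N+2}} \les \epsilon\nu^{-1/3}$. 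Beyond these norms, the estimates of Section~\ref{sec:nonlinearhizero} rely only on the facts that every function involved has zero $x$--frequency, that $\dZ = \dZ\pz$ annihilates the remaining zero $z$--mode, and that $\del \cdot Z_0 = 0$ (equivalently $\dY Z_0^2 = -\dZ Z_0^3$, so that $Z_0^2$ carries a nonzero $z$--frequency and, after splitting $Z_0^3 = P_{l=0}Z_0^3 + \pz Z_0^3$, the piece with nonzero $z$--frequency is controlled by its own derivative). Each such gain acts on a $Z_0$--factor or on an honestly differentiated factor --- never on the slot where $G$ sits --- and so is untouched by the substitution $F_0^r \mapsto G$; in the occasional case where it is the $G$--slot (or a zero--$z$--mode residue) that must supply a derivative, I would move a $\dZ$ onto it by integration by parts and use $\|\del G\|_{L^2 H^N}$, exactly as is done with $F_0^r$.

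Granting this, I would run each of the finitely many resulting integrals through the same argument as in Section~\ref{sec:nonlinearhizero}, with (\ref{eq:productrule}) handling the paraproduct splittings: one factor goes into $L^\infty_t H^N$ (costing $\epsilon\nu^{-1/3}$, or $\epsilon$ when it is a $Z_0$--factor) and the other two into the dissipative norms $\nu^{1/2}\|\del Z_0\|_{L^2 H^N} \sim \epsilon\nu^{-1/2}$ or $\nu^{1/2}\|\del F_0^r\|_{L^2 H^N},\ \nu^{1/2}\|\del G\|_{L^2 H^N} \sim \epsilon\nu^{-5/6}$, the worst case being $\epsilon \cdot \epsilon\nu^{-5/6} \cdot \epsilon\nu^{-5/6} = \epsilon^3\nu^{-5/3}$. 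Concretely, $|\te{NLT}(2,0,0)| \les \|G\|_{L^\infty H^N}\|\del Z_0^2\|_{L^2 H^N}\|\del F_0^r\|_{L^2 H^N} \les \epsilon^3\nu^{-5/3}$ and $|\te{NLP}(i,j,0,0)| \les \|\del G\|_{L^2 H^N}\|\del Z_0^j\|_{L^2 H^N}\|Z_0^i\|_{L^\infty H^{N+2}} \les \epsilon^3\nu^{-5/3}$, and the remaining terms --- including the more delicate $\te{NLT}(3,0,0)$ and the $\te{NLS}$ families, which invoke the $\dZ = \dZ\pz$ cancellation together with incompressibility of $Z_0$ in the manner indicated --- are entirely analogous. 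The main, and essentially bookkeeping, obstacle is the one already confronted in Section~\ref{sec:nonlinearhizero}: ensuring that in every term two of the three factors can legitimately be placed in a controlled $L^2_t H^N$ norm uniformly in $t_2$. Since the structural identities that make this possible concern only the $Z_0$--factors and honest derivatives, the replacement $F_0^r \mapsto G$ introduces no new difficulty, and summing the finitely many contributions yields the claim.
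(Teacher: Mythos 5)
Your proposal is correct and matches the paper's own (very brief) justification: the paper simply observes that the computations of Sec.~\ref{sec:nonlinearhizero} use only the two bootstrap-strength norms of the test function $F_0^r$ and that the structural cancellations (incompressibility of $Z_0$, $\dZ = \dZ \pz$) act on the remaining factors, so the test slot may be replaced by any $G$ with the stated bounds. Your explicit verification of the worst-case power count $\epsilon^3\nu^{-5/3}$ and of the one case where the $G$-slot must absorb a $\dZ$ via $\|\del G\|_{L^2H^N}$ is exactly the content the paper leaves implicit.
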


\subsubsection{Suppression of the lift-up effect} \label{sec:luhi}
As discussed above, the main stabilizing effect in our work is that the magnetic field induces oscillations that suppress the lift-up effect. In this section, we show how to estimate LU with no losses by exploiting these oscillations using integration by parts in time. Of crucial importance is that incompressibility implies that $F_0^{-,2}$ always has a nonzero $Z$-frequency, which ensures that there is no component of the lift-up term that does not oscillate. Noting that $T_{\alpha}^t g_0 = e^{\alpha t \dZ} g_0$ for any function $g$, we integrate by parts in time to obtain 
\begin{align}
& - \te{LU}  = \frac{1}{2\alpha}\int_{t_1}^{t_2}\int \DN \pz F_0^{+,1} \DN \dZ^{-1} \pz \dt T_{2\alpha}^t F_0^{-,2} dV dt \nonumber \\ 
& = \frac{1}{2\alpha}\inp{\pz F_0^{+,1}(t_2)}{\dZ^{-1}T_{2\alpha}^{t_2}\pz F_0^{-,2}(t_2)}_{H^N} - \frac{1}{2\alpha}\inp{\pz F_0^{+,1}(t_1)}{\dZ^{-1}T_{2\alpha}^{t_1}\pz F_0^{-,2}(t_1)}_{H^N} \label{eq:LUboundary}\\ 
& - \frac{1}{2\alpha}\int_{t_1}^{t_2} \inp{\pz \dt F_0^{+,1}}{\dZ^{-1}T_{2\alpha}^t\pz F_0^{-,2}}_{H^N}dt - \frac{1}{2\alpha}\int_{t_1}^{t_2} \inp{\pz F_0^{+,1}}{\dZ^{-1}T_{2\alpha}^t\pz \dt F_0^{-,2}}_{H^N}dt. \label{eq:LUdt}
\end{align}
The boundary terms in (\ref{eq:LUboundary}) are both treated similarly. For example, by Cauchy-Schwarz and the fact that $\dZ^{-1}\pz$ is bounded on $H^N$ we have
\begin{align*}
\left|\frac{1}{2\alpha}\inp{\pz F_0^{+,1}(t_2)}{\dZ^{-1}T_{2\alpha}^{t_2}\pz F_0^{-,2}(t_2)}_{H^N}\right| & \les \frac{1}{\alpha}\|F_0^{+,1}\|_{L^\infty H^N} \|F_0^{-,2}\|_{L^\infty H^N} \\ 
& \les \frac{1}{\alpha} (\epsilon \nu^{-1/3})^2,
\end{align*}
which is consistent for $\alpha$ sufficiently large. Now we turn to (\ref{eq:LUdt}). Expanding out the first of the two terms gives
\begin{align*}
   & \frac{1}{2\alpha}\int_{t_1}^{t_2} \inp{\pz \dt F_0^{+,1}}{\dZ^{-1}T_{2\alpha}^t\pz F_0^{-,2}}_{H^N}dt
   = \frac{1}{2\alpha}\int_{t_1}^{t_2} \inp{\nu \lap \pz \z{F}^{+,1}}{T_{2\alpha}^t\dZ^{-1}\pz F_0^{-,2}}_{H^N}dt \\ 
    & - \frac{1}{2\alpha}\int_{t_1}^{t_2}\inp{T_{2\alpha}^t F_0^{-,2}}{\dZ^{-1} T_{2\alpha}^t \pz F_0^{-,2}}_{H^N} dt 
     + \frac{1}{2\alpha}\int_{t_1}^{t_2} \inp{\pz (\dt F_0^{+,1})_{\mathcal{NL}}}{\dZ^{-1}T_{2\alpha}^t\pz F_0^{-,2}}_{H^N}dt \\ 
     & = \te{LU1} + \te{LU2} + \mathcal{NL}.
\end{align*}
The linear term LU1 arising from the dissipation is treated naturally with an integration by parts:
\begin{align*}
|\te{LU1}| = \left| \frac{1}{2\alpha}\int_{t_1}^{t_2} \inp{\nu \del \pz \z{F}^{+,1}}{T_{2\alpha}^t\dZ^{-1}\del \pz F_0^{-,2}}_{H^N}dt \right|
& \les \frac{\nu}{\alpha}\|\del \z{F}^{+,1}\|_{L^2 H^{N}}\|\del \z{F}^{-,2}\|_{L^2 H^{N}} \\ 
 & \les \frac{\nu}{\alpha} \epsilon^2 \nu^{-5/6} \nu^{-5/6} = \frac{1}{\alpha} (\epsilon \nu^{-1/3})^2,
\end{align*}
which is consistent for $\alpha$ sufficiently large. Crucially, the other linear term LU2 vanishes. Indeed, the inner product under the time integral can be rewritten as
$$
\int \pz \DN T_{2\alpha}^tF_0^{-,2} \DN \dZ^{-1} T_{2\alpha}^t\pz F_0^{-,2}dV = \frac{1}{2}\int \dZ \left(\dZ^{-1} \pz \DN T_{2\alpha}^t F_0^{-,2}\right)^2 dV = 0.$$ 
For the nonlinear contribution $\mathcal{NL}$ we have, by (\ref{eq:high2zero}) and Lemmas~\ref{lemhighnonzero} and~\ref{lemhighzero},
\begin{align*}
|\mathcal{NL}| & \les \left|\int_{t_1}^{t_2} \inp{\dZ^{-1} T_{2\alpha}^t P_{l\neq 0} F_0^{-,2}}{(\partial_t F^{+,1})_{\mathcal{NL}}^{\neq \neq}}\right| + \left|\int_{t_1}^{t_2} \inp{\dZ^{-1} T_{2\alpha}^t P_{l\neq 0} F_0^{-,2}}{(\partial_t F^{+,1})_{\mathcal{NL}}^{0 0}}\right| \\ 
& \les \epsilon^3 \nu^{-5/3} = (\epsilon \nu^{-1/3})^2 \epsilon \nu^{-1}. 
\end{align*}
The second term in (\ref{eq:LUdt}) is estimated similarly, and so we omit the details. In fact, this term is simpler because the only linear contribution comes from the dissipation.

\section{Intermediate norm energy estimates} \label{sec:int}
Our focus in this section is how to use (\ref{eq:dioph2}) along with the the high norm control on $\n{F}^2$ to improve~(\ref{eq:intboot}). We will provide the details for improving (\ref{eq:int1}) and then briefly discuss how the same techniques carry over to the other estimate.

\subsection{Estimate of $\n{F}^2$ in $H^{N' + 2 + n}$} \label{sec:Ntil}
Recall the notations $\til{N} = N' + 2 + n$ and $\til{J} = \jap{t}^{-1/2} J$. Dropping the negative term from the time derivative landing on the decaying time weight and absorbing $L_\lam$ as before into the left-hand side, we obtain the energy estimate 
\begin{align*}
& \frac{1}{2}\|\til{J} \n{F}^{+,2}(t_2)\|^2_{H^{\til{N}}} + 
\frac{\nu}{2}\|\til{J} \del_L \n{F}^{+,2}\|^2_{L^2 H^{\til{N}}} 
+ \frac{1}{2}\|\jap{t}^{-1/2}m^{1/2}\lam \sqrt{-\dot{M}M} \n{F}^{+,2}\|^2_{L^2 H^{\til{N}}}\\ 
& + \|\jap{t}^{-1/2}M\lam \sqrt{-\dot{m}^{1/2}m^{1/2}}\n{F}^{+,2}\|_{L^2 H^{\til{N}}}^2 \le \frac{1}{2}\|\til{J}\n{F}^{+,2}(t_1)\|_{H^N}^2 -\int_{t_1}^{t_2} \inp{\til{J} \n{F}^{+,2}}{\dXY^L \lap_L^{-1}\til{J} \n{F}^{+,2}}_{H^{\til{N}}} dt \\ 
& + \int_{t_1}^{t_2} \inp{\til{J} \n{F}^{+,2}}{\dXY^L \lap_L^{-1}T_{2\alpha}^t\til{J}\n{F}^{-,2}}_{H^{\til{N}}} dt - \int_{t_1}^{t_2} \inp{\til{J}\n{F}^{+,2}}{\til{J} (T_{2\alpha}^t Z^-\cdot \del_L F^{+,2})}_{H^{\til{N}}} dt \\
& - \int_{t_1}^{t_2} \inp{\til{J}\n{F}^{+,2}}{\til{J} (T_{2\alpha}^t F^{-}\cdot \del_L Z^{+,2})}_{H^{\til{N}}}dt 
- 2\int_{t_1}^{t_2} \inp{\til{J}\n{F}^{+,2}}{\til{J} (T_{2\alpha}^t \di^L Z^{-,j}\dij^L Z^{+,2})}_{H^{\til{N}}}dt \\
& + \int_{t_1}^{t_2} \inp{\til{J}\n{F}^{+,2}}{\dY^L \til{J} (T_{2\alpha}^t \djj^L Z^{-,i}\di^L Z^{+,j})}_{H^{\til{N}}}dt \\
& = \frac{1}{2}\|\til{J}\n{F}^{+,2}(t_1)\|_{H^N}^2 + \text{LS} + \text{OLS} + \text{NLT} + \text{NLS1} + \text{NLS2} + \te{NLP}.
\end{align*}
Below we consider only OLS and the nonlinear terms, since the term LS can absorbed into the left-hand side of the estimate by using (\ref{eq:mhalf}) in the same way that we used (\ref{eq:m}) in Sec.~\ref{sec:LSH2Hi}.

\subsubsection{Oscillating linear stretching term} \label{sec:intlso}
We now use integration by parts in time to control the oscillating linear stretching term with no losses, which is key to the proof and the fundamental difference between the results in Theorem~\ref{thrm:main} and Corollary~\ref{cor:4/3}. We begin by introducing the shorthand notation $S = S(t,\del) = \dXY^L \lap_L^{-1}$. That is, $S$ is the Fourier multiplier with symbol
$$S(t,k,\eta,l) = \frac{k(\eta - kt)}{k^2 + l^2 + (\eta - kt)^2}.$$
Note that we have the inequality 
\begin{equation} \label{eq:Sbound}
|S(t)| \les \frac{1}{\jap{t}} |k,l,\eta|.
\end{equation}
Integrating by parts in time we write the term as
\begin{align}
\te{OLS} & = \frac{1}{2\alpha}\int_{t_1}^{t_2} \inp{\til{J} \n{F}^{+,2}}{\dt T_{2\alpha}^t\db^{-1}S \til{J}\n{F}^{-,2}}_{H^{\til{N}}} dt\nonumber \\ 
& = \frac{1}{2\alpha}\inp{\til{J}\n{F}^{+,2}(t_2)}{T_{2\alpha}^{t_2} \db^{-1} S \til{J} \n{F}^{-,2}(t_2)}_{H^{\til{N}}} -  \frac{1}{2\alpha} \inp{\til{J}\n{F}^{+,2}(t_1)}{T_{2\alpha}^{t_1}\db^{-1} S \til{J} \n{F}^{-,2}(t_1)}_{H^{\til{N}}} \label{int:boundary} \\
& \qquad - \frac{1}{2\alpha} \int_{t_1}^{t_2} \left\langle \til{J}\n{F}^{+,2},\left(2\frac{\dot{\til{J}}}{\til{J}}S + \dot{S}\right)T_{2\alpha}^t\db^{-1}\til{J}\n{F}^{-,2}\right\rangle_{H^{\til{N}}} dt \label{int:dtmult}\\ 
& \qquad- \frac{1}{2\alpha} \int_{t_1}^{t_2} \inp{\til{J}\dt \n{F}^{+,2}}{T_{2\alpha}^t\db^{-1}S\til{J}\n{F}^{-,2}}_{H^{\til{N}}}dt \label{int:dt1}\\
& \qquad- \frac{1}{2\alpha} \int_{t_1}^{t_2} \inp{\til{J} \n{F}^{+,2}}{T_{2\alpha}^t\db^{-1}\til{J}S\dt\n{F}^{-,2}}_{H^{\til{N}}}dt. \label{int:dt2}
\end{align}
The boundary terms in (\ref{int:boundary}) are both treated similarly, and so we will only estimate the one at $t = t_2$. Recalling the definitions of $c$ and $n$ from Theorem~\ref{thrm:main}, we have, by Cauchy-Schwarz, (\ref{eq:dioph2}), (\ref{mtrivcomt}), and  (\ref{eq:Sbound}),
\begin{align*}
\frac{1}{2\alpha}\left|\inp{\til{J}\n{F}^{+,2}(t_2)}{T_{2\alpha}^{t_2} \db^{-1} S \til{J} \n{F}^{-,2}(t_2)}_{H^{\til{N}}}\right| & \les \frac{1}{c \alpha} \|\til{J} \n{F}^{+,2}\|_{L^\infty H^{\til{N}}}\|\jap{t}^{-1/2} m^{-1/2} S A\n{F}^{-,2}\|_{L^\infty H^{\til{N}+n}}
\\
& \les \frac{1}{c \alpha} \|\til{J} \n{F}^{+,2}\|_{L^\infty H^{\til{N}}}\|\jap{t}^{1/2} S A\n{F}^{-,2}\|_{L^\infty H^{\til{N}+n}} \\ 
& \les \frac{1}{c\alpha}\|\til{J} \n{F}^{+,2}\|_{L^\infty H^{\til{N}}}\|A \n{F}^{-,2}\|_{L^\infty H^{\til{N} + 1 + n}} \\ 
& \les \frac{\epsilon^2}{c \alpha},
\end{align*}
which suffices for $c \alpha$ chosen sufficiently large. In the last line above we have used that $\til{N} + 1 + n = N' + 3 + 2n \le N$. Next consider (\ref{int:dtmult}), which splits into five terms since 
\begin{equation} \label{eq:JdotonJ}
\frac{\dot{\til{J}}}{\til{J}}S + \dot{S} = \left(\frac{\dot{m}^{1/2}}{m^{1/2}} + \delta \nu^{1/3} + \frac{\dot{M}}{M} - \frac{1}{2}t\jap{t}^{-2}\right)S + \dot{S}. 
\end{equation}
In the order listed in (\ref{eq:JdotonJ}) we label these five terms as 
$$(\ref{int:dtmult}) = (\ref{int:dtmult}\te{a}) + (\ref{int:dtmult}\te{b}) + (\ref{int:dtmult}\te{c}) + (\ref{int:dtmult}\te{d}) + (\ref{int:dtmult}\te{e}).$$
For (\ref{int:dtmult}\te{a}) we recall (\ref{eq:mhalf}), which states that $|\dot{m}^{1/2}/m^{1/2}| = |S(t)|$ on it's support. Hence, again using (\ref{eq:Sbound}), (\ref{mtrivcomt}), and (\ref{eq:dioph2}), there holds 
\begin{align*} |(\ref{int:dtmult}\te{a})| &\les \frac{1}{c\alpha}\int_{t_1}^{t_2}\|\til{J}\n{F}^{+,2}\|_{H^{\til{N}}}\frac{1}{\jap{t}^{3/2}}\|A\n{F}^{-,2}\|_{H^{\til{N} + 2 + n}}dt \\ & \les \frac{1}{c\alpha} \|\til{J}\n{F}^{+,2}\|_{L^\infty H^{\til{N}}}\|A\n{F}^{-,2}\|_{L^\infty H^{\til{N} + 2 + n}} \les \frac{\epsilon^2}{c\alpha},
\end{align*}
which is consistent. Since $|\dot{S}| \les |S(t)|^2$ and $t\jap{t}^{-2} \les \jap{t}^{-1}$, it follows that both (\ref{int:dtmult}\te{d}) and  (\ref{int:dtmult}\te{e}) can be estimated in exactly the same manner as (\ref{int:dtmult}\te{a}). We thus skip these terms and turn to (\ref{int:dtmult}\te{b}) and (\ref{int:dtmult}\te{c}), for which we have the slight variations
\begin{align*}& |(\ref{int:dtmult}\te{b})| \les \frac{\nu^{1/3}}{c\alpha}\|\til{J}\n{F}^{+,2}\|_{L^2 H^{\til{N}}}\|A\n{F}^{-,2}\|_{L^2 H^{\til{N} + 1 + n}} \les \frac{\nu^{1/3}}{c\alpha} \epsilon^2 \nu^{-1/6}\nu^{-1/6} = \frac{\epsilon^2}{c\alpha}, \\
& |(\ref{int:dtmult}\te{c})| \les \frac{1}{c\alpha}\|\sqrt{-\dot{M}{M}}\til{J}\n{F}^{+,2}\|_{L^2 H^{\til{N}}}\|\sqrt{-\dot{M}M}A\n{F}^{-,2}\|_{L^2 H^{\til{N} + 1 + n}} \les \frac{\epsilon^2}{c\alpha}.
\end{align*}
Now we consider (\ref{int:dt1}) and (\ref{int:dt2}). Expanding out (\ref{int:dt1}) gives
\begin{align} \label{eq:intexpand}
(\ref{int:dt1}) & = \frac{1}{2\alpha}\int_{t_1}^{t_2} \inp{\til{J}S \n{F}^{+,2}}{T_{2\alpha}^t \db^{-1}S\til{J}\n{F}^{-,2}}dt - \frac{1}{2\alpha}\int_{t_1}^{t_2} \inp{T_{2\alpha}^t\til{J}S \n{F}^{-,2}}{T_{2\alpha}^t \db^{-1}S\til{J}\n{F}^{-,2}}dt \\
& - \frac{\nu}{2\alpha}\int_{t_1}^{t_2} \inp{\til{J} \lap_L \n{F}^{+,2}}{T_{2\alpha}^t\db^{-1}S\til{J} \n{F}^{-,2}}dt - \mathcal{NL} \nonumber,
\end{align}
where 
\begin{equation} \label{eq:IBTNLint}
    \mathcal{NL} = \frac{1}{2\alpha}\int_{t_1}^{t_2} \inp{\til{J}(\partial_t F^{+,2})_{\mathcal{NL}}}{T_{2\alpha}^t \db^{-1} S \til{J} \n{F}^{-,2}}_{H^{\til{N}}} dt.
\end{equation}
Using that $S$ is self-adjoint, the first two terms on the right-hand side of (\ref{eq:intexpand}) are bounded as was (\ref{int:dtmult}\te{a}). For the linear term arising from the dissipation we integrate by parts:
\begin{align*}
\left|\frac{1}{2\alpha}\int_{t_1}^{t_2} \inp{\nu \lap_L \til{J} \n{F}^{+,2}}{T_{2\alpha}^t\db^{-1}\til{J}S\n{F}^{-,2}}_{H^{\til{N}}}dt\right| &= \left|\frac{1}{2\alpha}\int_{t_1}^{t_2} \inp{\nu \del_L \til{J} \n{F}^{+,2}}{T_{2\alpha}^t\db^{-1}\til{J}S\del_L\n{F}^{-,2}}_{H^{\til{N}}}dt\right| \\
& \les \frac{\nu}{c\alpha}\|\til{J} \del_L \n{F}^{+,2}\|_{L^2 H^{\til{N}}}\|A \del_L \n{F}^{-,2}\|_{L^2 H^{\til{N} + 1 + n}} \\ 
& \les \frac{\nu}{c\alpha} \epsilon^2 \nu^{-1/2} \nu^{-1/2} = \frac{\epsilon^2}{c\alpha}.
\end{align*}
The contribution to (\ref{int:dt1}) from $\mathcal{NL}$ will be considered below in Sec.~\ref{sec:intnl} along with the natural nonlinear terms that arise in the energy estimate. Regarding (\ref{int:dt2}), we observe that it can be rewritten as 
$$(\ref{int:dt2}) = \frac{1}{2\alpha} \int_{t_1}^{t_2} \inp{T_{-2\alpha}^t\db^{-1}S \til{J}\n{F}^{+,2}}{\til{J}\dt \n{F}^{-,2}}dt,$$
and hence it is essentially symmetric to (\ref{int:dt1}). It can thus be estimated in the same way. This completes the estimate of OLS.

\subsubsection{Nonlinear terms} \label{sec:intnl}
In this section we estimate the nonlinear terms in the energy estimate of Sec.~\ref{sec:Ntil} as well as the term $\mathcal{NL}$ above, which we split, corresponding to writing $(\dt F^{+,2})_{\mathcal{NL}} = \left((\dt F^{+,2})_{\mathcal{NL}} - (\dt F^{+,2})_{\mathcal{NL}}^{\neq \neq}\right) + (\dt F^{+,2})_{\mathcal{NL}}^{\neq \neq}$ in (\ref{eq:IBTNLint}), as
$$\mathcal{NL} = \mathcal{NL}^{0 \neq} + \mathcal{NL}^{\neq \neq}.$$
\par 
We first consider the interaction between the nonzero modes. For the nonlinear terms arising in the initial energy estimate we have, by (\ref{eq:int1}) and Lemma~\ref{lemhighnonzero}, 
$$ \left|\int_{t_1}^{t_2} \inp{\til{J}\n{F}^2}{\til{J}(\dt F^2)_{\mathcal{NL}}^{\neq \neq}}_{H^{\til{N}}} dt \right| \les \epsilon^3 \nu^{-1}.$$
The term $\mathcal{NL}^{\neq \neq}$ is controlled similarly. Indeed, using (\ref{eq:Sbound}), (\ref{eq:dioph2}), and $\til{N} \ll N$, we have
\begin{equation} \label{eq:absorb}
\|T_{2\alpha}^t \db^{-1} S \til{J} \n{F}^2\|_{H^{\til{N}}} \les \|A \n{F}^2\|_{H^N},
\end{equation}
and hence $|\mathcal{NL}^{\neq \neq}| \les \epsilon^3 \nu^{-1}$ by (\ref{eq:high2nonzero1}), (\ref{eq:high2nonzero2}), and Lemma~\ref{lemhighnonzero}.
\par 

Now we turn to the interactions between the zero and nonzero modes. Unlike the $(\neq,\neq)$ terms, they do not follow directly from previous calculations. This is because we used (\ref{mcom}) when controlling these interactions in Sec.~\ref{sec:Hi2nl}, and $m$ weakens more than $m^{1/2}$ near the critical times ($m^{1/2}/m$ can become size $\nu^{-1/3}$). It turns out however that due to (\ref{eq:zerovelocity}) and $\til{N} + 3 \le N$ these terms are not difficult to control, as we now demonstrate. We begin with the terms in the energy estimate written in Sec.~\ref{sec:Ntil}. By (\ref{mcom}) we obtain
\begin{align*}
|\te{NLT}(0,\neq)| &= \left|\int_{t_1}^{t_2} \int \jap{\del}^{\til{N}} \til{J} \n{F}^{2} \jap{\del}^{\til{N}} \til{J} (Z_0 \cdot \del_L \n{F}^{2})dV dt \right| \\ 
& \les \|\til{J} \n{F}^2\|_{L^2 H^{\til{N}}} \|Z_0\|_{L^\infty H^{\til{N}+1}}\|\til{J} \del_L \n{F}^2\|_{L^2 H^{\til{N}}} \\ 
& \les \epsilon^3 \nu^{-1/6} \nu^{-1/2} = \epsilon^3 \nu^{-2/3}
\end{align*}
and
\begin{align*}
|\te{NLT}(\neq,0)| &= \left|\int_{t_1}^{t_2} \int \jap{\del}^{\til{N}} \til{J} \n{F}^{2} \jap{\del}^{\til{N}} \til{J} (\n{Z} \cdot \del F_0^{2})dV dt \right| \\ 
& \les \|\til{J} \n{F}^2\|_{L^2 H^{\til{N}}} \|\n{Z}\|_{L^2 H^{\til{N}}}\|Z_0^2\|_{L^\infty H^{\til{N}+3}} \\ 
& \les \epsilon^3 \nu^{-1/6} \nu^{-1/2} = \epsilon^3 \nu^{-2/3},
\end{align*}
which both suffice. For NLS2 we have 
\begin{align*}
|\te{NLS2}(i,j,0,\neq)| &= \left|\int_{t_1}^{t_2} \int \jap{\del}^{\til{N}} \til{J} \n{F}^{2} \jap{\del}^{\til{N}} \til{J} (\di Z_0^j \dij^L \n{Z}^2)dV dt \right| \\ 
& \les \|\til{J} \n{F}^2\|_{L^2 H^{\til{N}}} \|Z_0\|_{L^\infty H^{\til{N}+2}}\|\til{J} \n{F}^2\|_{L^2 H^{\til{N}}} \\ 
& \les \epsilon^3 \nu^{-1/6} \nu^{-1/6} = \epsilon^3 \nu^{-1/3}
\end{align*}
and 
\begin{align*}
|\te{NLS2}(i,j,\neq,0)| &= \left|\int_{t_1}^{t_2} \int \jap{\del}^{\til{N}} \til{J} \n{F}^{2} \jap{\del}^{\til{N}} \til{J} (\di^L \n{Z}^j \dij Z_0^2)dV dt \right| \\ 
& \les \|\til{J} \n{F}^2\|_{L^2 H^{\til{N}}} \|A\n{F}^j\|_{L^2 H^{\til{N}}}\|Z_0^2\|_{L^\infty H^{\til{N} + 3}} \\ 
& \les \epsilon^3 \nu^{-1/6} \nu^{-1/2} = \epsilon^3 \nu^{-2/3}.
\end{align*}
Next, one can check that our methods in Sec.~\ref{sec:Hi2nl} only employed (\ref{mcom}) in the form
$$\sqrt{\frac{m(t,k,\eta',l')}{m(t,k,\eta,l)}} \les \jap{\eta - \eta'} + \jap{l - l'},$$
and hence the $(0,\neq)$ and $(\neq,0)$ interactions for NLS1 and NLP follow immediately from the estimates in Sec.~\ref{sec:Hi2nl}. For $\mathcal{NL}^{0\neq}$, we notice that by (\ref{eq:absorb}) all of the inequalities above hold with $\|\til{J} \n{F}^2\|_{L^2 H^{\til{N}}}$ replaced with $\|A \n{F}^2\|_{L^2H^N}$, which is inconsequential in the final inequality because both quantities are controlled by $\epsilon \nu^{-1/6}$.

\subsection{Estimate of $\n{F}^2$ in $H^{N'}$}
To improve estimate (\ref{eq:int2}) we use the same strategy as in Sec.~\ref{sec:Ntil}, except now the $H^{\til{N}}$ bounds in (\ref{eq:int1}) play the role of the high norm control that absorbs the loss of derivatives arising from integration by parts in time. In particular, in Sec.~\ref{sec:intlso} we observed that by using (\ref{mtrivcomt}) the gap between $A$ and $\til{J}$ could be compensated for by paying $\jap{t}^{1/2}$ and then using (\ref{eq:Sbound}). Since $J/\til{J} = \jap{t}^{1/2}$, the same structure applies to the $H^{N'}$ estimate. For example, we estimate the boundary term at $t = t_2$ analogous to that in (\ref{int:boundary}) as follows:
\begin{align*}
\frac{1}{2\alpha}\left|\inp{J\n{F}^{+,2}(t_2)}{T_{2\alpha}^{t_2} \db^{-1} S J \n{F}^{-,2}(t_2)}_{H^{N'}}\right| & \les \frac{1}{c \alpha} \|J \n{F}^{+,2}\|_{L^\infty H^{N'}}\|\jap{t}^{1/2} S \til{J}\n{F}^{-,2}\|_{L^\infty H^{N'+n}}
\\
& \les \frac{1}{c\alpha}\|J \n{F}^{+,2}\|_{L^\infty H^{N'}}\|\til{J}\n{F}^{-,2}\|_{L^\infty H^{N' + 1 + n}} \les \frac{\epsilon^2}{c \alpha}.
\end{align*}
In the last line above we have used (\ref{eq:Sbound}) and the assumption that $N' + 1 + n \le \til{N}$. The treatment of all the other terms encountered in Sec.~\ref{sec:Ntil} generalizes similarly.

\section{Low norm energy estimates} \label{sec:low}
In this section we improve (\ref{eq:low1}) and (\ref{eq:low3}). We provide the details only for (\ref{eq:low1}), as the estimate of $\n{F}^3$ follows similarly. An energy estimate gives
\begin{align*}
& \frac{1}{2}\|\til{A} \n{F}^{+,1}(t_2)\|^2_{H^{N''}} + \frac{\nu}{2}\|\til{A}\del_L \n{F}^{+,1}\|_{L^2 H^{N''}}^2 + \frac{1}{2}\|\til{m} \lam \sqrt{-\dot{M}M}\n{F}^{+,1}\|_{L^2 H^{N''}}^2 \\ 
& \le \frac{1}{2}\|\til{A}\n{F}^{+,1}(t_1)\|_{H^{N''}}^2 
-\int_{t_1}^{t_2} \inp{\til{A}\n{F}^{+,1}}{T_{2\alpha}^t\til{A} \n{F}^{-,2}}_{H^{N''}} dt 
+\int_{t_1}^{t_2} \inp{\til{A}\n{F}^{+,1}}{\til{A}\partial_{XX} \lap_L^{-1}\n{F}^{+,2}}_{H^{N''}}dt\\
& + \int_{t_1}^{t_2} \inp{\til{A}\n{F}^{+,1}}{\til{A}\partial_{XX} \lap_L^{-1}T_{2\alpha}^t\n{F}^{-,2}}_{H^{N''}}dt
+ \int_{t_1}^{t_2} \inp{\til{A} \n{F}^{+,1}}{\dX \til{A}(T_{2\alpha}^t\djj^L Z^{-,i} \di^L Z^{+,j})}_{H^{N''}}dt \\ 
& - \int_{t_1}^{t_2} \inp{\til{A} \n{F}^{+,1}}{\til{A}(T_{2\alpha}^tZ^{-}\cdot \del_L F^{+,1})}_{H^{N''}}dt 
-\int_{t_1}^{t_2}\inp{\til{A} \n{F}^{+,1}}{\til{A}(T_{2\alpha}^tF^{-}\cdot \del_L Z^{+,1})}_{H^{N''}}dt \\
& - 2\int_{t_1}^{t_2} \inp{\til{A} \n{F}^{+,1}}{\til{A}(T_{2\alpha}^t\di^L Z^{-,j} \dij^L Z^{+,1})}_{H^{N''}} dt \\ 
& = \frac{1}{2}\|\til{A}\n{F}^{+,1}(t_1)\|_{H^{N''}}^2 + \te{LU} + \te{LP1} + \te{LP2} + \te{NLP} + \te{NLT} + \te{NLS1} + \te{NLS2},
\end{align*}
where we have skipped the step of absorbing LS and $\te{L}_\lam$ into the left-hand side since it is done in the same manner as in previous estimates. Moreover, the linear stretch and linear pressure terms are dealt with exactly as in Sec.~\ref{sec:high1}, and so we skip them below.

\subsection{Nonlinear terms} \label{sec:lownl} \label{sec:NLlow}
Since $N''$ is chosen sufficiently smaller than $N'$, we see that NLT, NLS1, and NLS2 can each be controlled in the same manner as the LH interaction of the associated term in Sec.~\ref{sec:high1}. For example, for $\te{NLT}(2,\neq,\neq)$ we have the estimate 
\begin{align*}
|\te{NLT}(2,\neq,\neq)| & = \left|\int_{t_1}^{t_2} \int \jap{\del}^{N''} \til{A} \n{F}^{1} \jap{\del}^{N''} \til{A} (\n{Z}^2 \dY^L \n{F}^{1}) dV dt\right| \\ 
& \les \int_{t_1}^{t_2} \|\til{A}\n{F}^1\|_{H^{N''}}\|\jap{t}^{-1}J \n{F}^2\|_{H^{N'' + 1}}\|\del_L \n{F}^1\|_{H^{N''}} dt \\ 
& \les \nu^{-1/3} \|\til{A}\n{F}^1\|_{L^\infty H^{N''}} \|J \n{F}^2\|_{L^2 H^{N'' + 1}}\|\til{A} \del_L \n{F}^1\|_{L^2 H^{N''}} \\ 
& \les \epsilon^3 \nu^{-1/3} \nu^{-1/6} \nu^{-1/2} = \epsilon^3 \nu^{-1}.
\end{align*}
The nonlinear pressure terms also follow relatively easily due to the low regularity. Carrying out the calculations as just described completes the estimate of the nonlinear terms and moreover yields the following lemma, which will be useful in the controlling the lift-up term using integration by parts in time.

\begin{lemma}\label{lemlownonzero}
Let $j \in \{1,3\}$ and suppose that $G$ is a smooth function that satisfies 
$$\|\n{G}\|_{L^\infty H^{N''}} + \nu^{1/6}\|\n{G}\|_{L^2 H^{N''}} + \nu^{1/2}\|\del_L \n{G}\|_{L^2 H^{N''}} \les \epsilon.$$
Then, there holds
\begin{equation}
    \left|\int_{t_1}^{t_2} \inp{A_j(\dt F^j)_{\mathcal{NL}}}{\n{G}}_{H^{N''}} dt\right| \les \epsilon^3 \nu^{-1},
\end{equation}
where $A_1 = \til{A}$ and $A_3 = A$.
\end{lemma}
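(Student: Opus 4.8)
The plan is to recognize that Lemma~\ref{lemlownonzero} is simply an abstraction of the nonlinear estimates already performed in Sec.~\ref{sec:NLlow} (together with the LH-type estimates of Sec.~\ref{sec:high1} that were reused there), so the proof amounts to re-running those estimates with $\n{G}$ in the role of the tested-against function. Concretely, I would first recall that for $j \in \{1,3\}$ we have $(\dt F^j)_{\mathcal{NL}} = \te{NLT} + \te{NLS1} + \te{NLS2} + \te{NLP}$, drop the operators $T_{\pm 2\alpha}^t$ and the $\pm$ labels as in Sec.~\ref{sec:highmain}, expand $\inp{A_j(\dt F^j)_{\mathcal{NL}}}{\n{G}}_{H^{N''}}$, apply the paraproduct decomposition of Sec.~\ref{sec:freqdecomp} to each term, and split into the zero/nonzero mode interactions $\te{NLT}(\cdot,s_1,s_2)$, etc.

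The key observation — which is why the hypothesis is phrased the way it is — is that in every one of the low-norm estimates in Sec.~\ref{sec:NLlow} the factor $\til{A}\n{F}^{+,1}$ (when $j=1$) or $A\n{F}^{+,3}$ (when $j=3$) tested against the nonlinearity enters only through one of three quantities: $\|\til{A}\n{F}^{+,j}\|_{L^\infty H^{N''}}$, $\|\til{A}\n{F}^{+,j}\|_{L^2 H^{N''}}$ (bounded by $\epsilon \nu^{-1/6}$ via the enhanced dissipation, cf.\ Lemma~\ref{lemma:boot}), and — only in the nonlinear pressure terms, where one integrates by parts to move a derivative off the nonlinearity — through $\|\del_L\n{F}^{+,j}\|_{L^2 H^{N''}} \les \epsilon \nu^{-1/2}$ (using $|\dX|, |\dZ| \les |\del_L|$ on the nonzero modes). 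These are precisely the three norms of $\n{G}$ assumed to be $\les \epsilon$, $\les \epsilon \nu^{-1/6}$, and $\les \epsilon \nu^{-1/2}$. Moreover, the weight $A_j$ placed on the nonlinearity is exactly the weight used for $\n{F}^{+,j}$ in the corresponding energy estimate ($\til{A}$ for $j=1$, $A$ for $j=3$), so that (\ref{mlapl}), (\ref{mtrivcom}), (\ref{mtrivcomt}) and (\ref{eq:expdec}) — and, where it was needed, (\ref{mcom}) applied to $A_j$ and to the field weights, never to $\n{G}$ — may all be invoked just as before. Thus every estimate in Sec.~\ref{sec:NLlow} goes through verbatim with $\til{A}\n{F}^{+,j}$ replaced by $\n{G}$ and the remaining factors controlled by the bootstrap hypotheses (\ref{eq:highh1})--(\ref{eq:zerovelocity}); summing the finitely many contributions yields $\les \epsilon^3 \nu^{-1}$.

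The one point that genuinely requires care — the main (and essentially only) obstacle — is verifying that none of the low-norm nonlinear estimates secretly exploited a property of $\n{F}^{+,j}$ that a general $\n{G}$ need not share: no use of the high-norm bounds (\ref{eq:high1})--(\ref{eq:high3}), of the evolution equation for $F^{+,j}$, of incompressibility of $\n{F}^{+,j}$ itself, or of a commutator estimate on the weight carried by the tested-against factor. Inspecting Sec.~\ref{sec:NLlow} term by term, this is the case: the tested-against function always appears in an $L^\infty_t$ or $L^2_t$ norm in $H^{N''}$ and is never differentiated except in the pressure terms, for which the hypothesis on $G$ supplies exactly the $\del_L$-bound used after integration by parts. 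Once this robustness is checked, the abstraction is legitimate and the lemma follows, the bound $\epsilon \nu^{-1}$ reflecting that $N''$ is chosen small enough relative to $N'$ that a single loss of derivatives suffices and the worst power of $\nu$ in the low norm is the uniform $\nu^{-1}$ already seen in Sec.~\ref{sec:NLlow}.
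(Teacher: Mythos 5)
Your proposal is correct and matches the paper's own argument: the paper proves Lemma~\ref{lemlownonzero} exactly by observing that the low-norm nonlinear estimates of Sec.~\ref{sec:NLlow} use the tested-against factor only through the three norms listed in the hypothesis (with the $\del_L$ bound entering only via the integration by parts in the pressure term), so they apply verbatim with $\n{G}$ in place of $A_j\n{F}^{+,j}$. Your added check that no commutator estimate or equation-specific structure is imposed on $\n{G}$ is precisely the robustness the paper implicitly relies on.
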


\subsection{Lift-up term} \label{sec:lowlu}
Now we verify that by allowing the modes of $\n{F}^{1}$ to grow indefinitely after the critical time (quantified by the use of $\til{m}$ as opposed to $m$ in the norm for $\n{F}^1$) we can treat the lift-up term with no losses. As in the previous sections, we integrate by parts in time to rewrite 

\begin{align}
-\te{LU} & = \frac{1}{2\alpha}\inp{\til{A}\n{F}^{+,1}(t_2)}{\db^{-1}T_{2\alpha}^{t_2}\til{A}\n{F}^{-,2}(t_2)}_{H^{N''}} - \frac{1}{2\alpha}\inp{\til{A}\n{F}^{+,1}(t_1)}{\db^{-1}T_{2\alpha}^{t_1}\til{A}\n{F}^{-,2}(t_1)}_{H^{N''}} \label{eq:lowboundary}\\ 
& - \frac{1}{\alpha}\int_{t_1}^{t_2} \inp{\til{A}\n{F}^{+,1}}{\frac{\dot{\til{A}}}{\til{A}} \db^{-1}T_{2\alpha}^t \til{A}\n{F}^{-,2}}_{H^{N''}} dt \label{eq:lowdtmult}\\ 
& - \frac{1}{2\alpha}\int_{t_1}^{t_2}\inp{\til{A}\dt\n{F}^{+,1}}{\db^{-1}T_{2\alpha}^t\til{A}\n{F}^{-,2}}_{H^{N''}}dt - \frac{1}{2\alpha}\int_{t_1}^{t_2}\inp{\til{A}\n{F}^{+,1}}{\db^{-1}T_{2\alpha}^t\til{A}\dt\n{F}^{-,2}}_{H^{N''}}dt \label{eq:lowdt}.
\end{align}
Due to $\til{m} \le m$, the estimate of the boundary terms is the same as in Sec.~\ref{sec:intlso}, and so we skip it and move on to (\ref{eq:lowdtmult}). We split (\ref{eq:lowdtmult}) into 
$$(\ref{eq:lowdtmult}) = (\ref{eq:lowdtmult}\te{a}) + (\ref{eq:lowdtmult}\te{b}) + (\ref{eq:lowdtmult}\te{c}),$$
corresponding to 
$$\frac{\dot{\til{A}}}{\til{A}} = \frac{\dot{M}}{M} + \delta \nu^{1/3} + \frac{\dot{\til{m}}}{\til{m}}.$$
By (\ref{eq:dioph2}), (\ref{mtildecay}), and $\til{m} \le m$ we have 
\begin{align*}
| (\ref{eq:lowdtmult}\te{a})| & \les \frac{1}{c\alpha}\int_{t_1}^{t_2} \|\til{A}\n{F}^{+,1}\|_{H^{N''}}\frac{1}{\jap{t}}\|\sqrt{-\dot{M}M}J \n{F}^{-,2}\|_{H^{N'' + n + 2}} dt \\ 
& \les \frac{1}{c\alpha}\|\til{A}\n{F}^{+,1}\|_{L^\infty H^{N''}}\|\sqrt{-\dot{M}M}J \n{F}^{-,2}\|_{L^2 H^{N'}} \les C_0\frac{\epsilon^2}{c\alpha},
\end{align*}
which suffices for $\alpha \gg C_0/c$. The estimate of (\ref{eq:lowdtmult}\te{c}) follows from similar techniques and the fact that $|\dot{\til{m}}/{\til{m}}| \les \jap{t}^{-1}|k,\eta,l|$ on its support:
\begin{align*}
| (\ref{eq:lowdtmult}\te{c})| & \les \frac{1}{c\alpha}\int_{t_1}^{t_2} \|\til{A}\n{F}^{+,1}\|_{H^{N''}}\frac{1}{\jap{t}^2}\|J \n{F}^{-,2}\|_{H^{N'' + n + 3}} dt \\ 
& \les \frac{1}{c\alpha}\|\til{A}\n{F}^{+,1}\|_{L^\infty H^{N''}}\|J \n{F}^{-,2}\|_{L^\infty H^{N'}} \les C_0\frac{\epsilon^2}{c\alpha}.
\end{align*}
Controlling (\ref{eq:lowdtmult}\te{b}) is the same as the analogous term in Sec.~\ref{sec:intlso}, and so we omit the details. This completes the estimate of (\ref{eq:lowdtmult}). \par 
Next we consider (\ref{eq:lowdt}). The linear terms do not require any methods beyond what we have employed thus far, and so we will only sketch how to deal with them. The dissipation terms are easily estimated using integration by parts as in Sec.~\ref{sec:intlso}. Next, both $\dt \n{F}^{+,1}$ and $\dt \n{F}^{-,2}$ contain a LS (or OLS) term. Each of these terms carries a factor of $S(t)$ (recall the notation defined in Sec.~\ref{sec:intlso}), and so by using (\ref{eq:Sbound}) we bound these terms as we did (\ref{eq:lowdtmult}\te{c}). There are also linear contributions from LP1 and LP2 in $\dt\n{F}^{+,1}$. Using that $|\partial_{XX}\lap_{L}^{-1}| \les -\dot{M}M$, these are terms bounded like (\ref{eq:lowdtmult}a) above. Lastly, there is a linear term that arises from the lift-up term in $\dt\n{F}^{+,1}$, but this term vanishes like the analogous term did in Sec.~\ref{sec:luhi}.

Now we turn to the nonlinear terms created in (\ref{eq:lowdt}), which we write as (dropping the irrelevant minus signs and factors of $\alpha^{-1}$) 
\begin{align*}
    \int_{t_1}^{t_2}\inp{\til{A}(\dt F^{+,1})_{\mathcal{NL}}}{\db^{-1}T_{2\alpha}^t\til{A}\n{F}^{-,2}}_{H^{N''}}dt & + \int_{t_1}^{t_2}\inp{\til{A}\n{F}^{+,1}}{\db^{-1}T_{2\alpha}^t\til{A}(\dt F^{-,2})_{\mathcal{NL}}}_{H^{N''}}dt \\ 
    & = \mathcal{NL}_1 + \mathcal{NL}_2.
\end{align*}
Since $\|\db^{-1}T_{2\alpha}^t \til{A}\n{F}^2\|_{H^{N''}} \les \|A \n{F}^2\|_{H^N}$
it follows by (\ref{eq:high2nonzero1}), (\ref{eq:high2nonzero2}), and Lemma~\ref{lemlownonzero} that $|\mathcal{NL}_1| \les \epsilon^3 \nu^{-1}. $
The term $\mathcal{NL}_2$ is less immediate since the loss of regularity caused by $\db^{-1}$ implies that we must appeal to bootstrap hypotheses (\ref{eq:high1}) and (\ref{eq:high3}). We first notice that by an integration by parts, (\ref{eq:dioph2}), and (\ref{mtildecay}) there holds
\begin{align*}
|\mathcal{NL}_2| & \les \int_{t_1}^{t_2} \|\del_L \til{A} \n{F}^1\|_{H^{N''}}\jap{t}^{-1}\|\lam \jap{t}^{-1}\del_L (Z\cdot \del_L Z^2)_{\neq}\|_{H^{N'' + n + 4}}dt \\ 
& + \int_{t_1}^{t_2} \|\til{A} \n{F}^1\|_{H^{N''}}\jap{t}^{-1}\|\lam \jap{t}^{-1}\dY^L (\djj^L Z^i\di^L Z^j)_{\neq}\|_{H^{N'' + n + 4}}dt \\ 
& \les \epsilon \nu^{-1/2} \|\lam \jap{t}^{-1}\del_L (Z\cdot \del_L Z^2)_{\neq}\|_{L^\infty H^{N'' + n + 4}} \\ 
& + \epsilon \nu^{-1/6}\|\lam \jap{t}^{-1}\dY^L(\djj^L Z^i \di^L Z^j)_{\neq}\|_{L^\infty H^{N'' + n + 4}},
\end{align*}
and hence to complete the desired estimate under the assumptions of Theorem~\ref{thrm:main} it suffices to prove that 
\begin{align} 
\|\lam \jap{t}^{-1}\del_L (Z\cdot \del_L Z^2)_{\neq}\|_{L^\infty H^{N'' + n + 4}} & \les \epsilon^2 \nu^{-1/2} \label{eq1},\\ 
\|\lam \jap{t}^{-1}\dY^L(\djj Z^i \di Z^j)_{\neq}\|_{L^\infty H^{N'' + n + 4}} & \les \epsilon^2 \nu^{-5/6}. \label{eq2}
\end{align}
To prove (\ref{eq1}) we use $N'' + n + 5 \le N' \le N$ to obtain 
\begin{align*}
\|\lam \jap{t}^{-1}\del_L (Z\cdot \del_L Z^2)_{\neq}\|_{H^{N'' + n + 4}} & \les \|\lam (Z \cdot \del_L Z^2)_{\neq}\|_{H^{N'' + n + 5}} \\ 
& \les \|\lam \n{Z}\|_{H^N}  \|\del_L Z^2\|_{H^{N'}} + \|Z_0\|_{H^N}  \|\lam \del_L \n{Z}^2\|_{H^{N'}} \\ 
& \les (\|\lam \n{Z}\|_{H^N} + \|Z_0\|_{H^N})(\|J\n{F}^2\|_{H^{N'}} + \|Z_0\|_{H^N}) \\
& \les \epsilon^2 \nu^{-1/3},
\end{align*}
which suffices. For (\ref{eq2}) we have 
\begin{align*}
\|\lam \jap{t}^{-1}\dY^L(\djj^L Z^i \di^L Z^j)_{\neq}\|_{H^{N'' + n + 4}} & \les \|\lam (\djj^L Z^i \di^L Z^j)_{\neq}\|_{H^{N'' + n  + 5}} \\ 
& \les \epsilon^2 \nu^{-2/3},
\end{align*}
where the last line follows by, as in previous estimates, using (\ref{mlapl}), (\ref{mtrivcom}), and considering separately the cases $i,j \in \{1,3\}$, $i = 2$ and $j \neq 2$, and $i = j = 2$. 

\section{Zero mode velocity estimates} \label{sec:zero}
In this section we improve (\ref{eq:zerovelocity}). For any $r \in \{1,2,3\}$, an energy estimate gives 
\begin{align*}
& \frac{1}{2}\|Z_0^{+,r}(t_2)\|^2_{H^N} + \nu\|\del Z^{+,r}_0\|_{L^2 H^N}^2 = \frac{1}{2}\|Z_0^{+,r}(t_1)\|_{H^N}^2 - \ind_{r = 1}\int_{t_1}^{t_2} \int \DN Z^{+,1}_0 \DN T_{2\alpha}^t Z^{-,2}_0 dV dt \\ 
& -\int_{t_1}^{t_2} \int \DN Z^{+,r}_0 \DN \left(T_{2\alpha}^tZ^{-} \cdot \del_L Z^{+,r}dVdt\right) \\ 
& + \ind_{r \neq 1}\int_{t_1}^{t_2} \int \DN Z^{+,r}_0 \DN \partial_r\lap^{-1}(T_{2\alpha}^t\djj Z^{-,i} \di Z^{+,j})_0dV dt \\ 
& = \frac{1}{2}\|Z_0^+(t_1)\|_{H^N}^2 + \te{LU} + \te{NLT} + \te{NLP}.
\end{align*}
The lift-up term can be dealt with using integration by parts in time as in Sec.~\ref{sec:luhi}. We skip it and turn to the nonlinear terms, beginning with the transport term
$$\te{NLT}(j) = \int_{t_1}^{t_2} \int \DN Z_0^r \DN (Z^j\djj Z^r)_0 dV dt.$$
First we treat the interaction between the nonzero modes. When $j = 1,3$ we have 
\begin{align*}
|\te{NLT}(j,\neq,\neq)| & \les \|Z_0^r\|_{L^\infty H^N}\|\til{A}\n{F}^j\|_{L^2 H^N} \|\til{A} \n{F}^r\|_{L^2 H^N} \\ 
& \les \epsilon^3 \nu^{-1/2}\nu^{-1/2} = \epsilon^3 \nu^{-1},
\end{align*}
while for $j = 2$ there holds
\begin{align*}
|\te{NLT}(2,\neq,\neq)| & \les \|Z_0^r\|_{L^\infty H^N} \|A \n{F}^2\|_{L^2 H^N}\nu^{-1/3}\|\til{A}\n{F}^r\|_{L^2 H^N} \\ 
& \les \epsilon^3 \nu^{-1/6} \nu^{-1/3} \nu^{-1/2} = \epsilon^3 \nu^{-1}.
\end{align*}
For the interaction between the zero modes, we use the divergence free condition to obtain
\begin{align*}
|\te{NLT}(2,0,0)| & \les \|Z_0^r\|_{L^\infty H^N} \|\del Z_0^2\|_{L^2 H^N} \|\del Z_0^r\|_{L^2 H^N} \\ 
& \les \epsilon^3 \nu^{-1/2} \nu^{-1/2} = \epsilon^3 \nu^{-1}.
\end{align*}
The NLT(3,0,0) term is bounded similarly by employing the method used to treat the associated term in Sec.~\ref{sec:nonlinearhizero}. We omit the details. Turning now to the pressure, we observe that by using incompressibility and integration by parts it can be written as
$$\te{NLP}(i,j) = -\int_{t_1}^{t_2} \int \DN \partial_r Z_0^r \jap{\del}^N \dij \lap^{-1} (Z^i Z^j)_0 dV dt, $$
where the term is nonzero only for $i,j \in \{2,3\}$. For the interaction between the nonzero modes we use that $\dij\lap^{-1}$ is bounded on $H^N$ along with a paraproduct decomposition to obtain 
\begin{align*}
|\te{NLP}(i,j,\neq,\neq)| & \les \|\del Z_0^r\|_{L^2 H^N}\|\n{Z}^j\|_{L^2 H^{N''}}\|\n{Z}^{i}\|_{L^\infty H^N} + \te{ symmetric term} \\ 
& \les \epsilon^3 \nu^{-1/2} \nu^{-1/6} \nu^{-1/3} = \epsilon^3 \nu^{-1},
\end{align*}
which is consistent. For $\te{NLP}(i,j,0,0)$ we use that $i,j \in \{2,3\}$ along with incompressibility implies that at least one of $Z_0^i$ or $Z_0^j$ has a nonzero $Z$-frequency. Thus, there holds 
\begin{align*}
|\te{NLP}(i,j,0,0)| &\les \|\del Z_0^r\|_{L^2 H^N} \|\del Z_0^i \|_{L^2 H^N} \|Z_0^j\|_{L^\infty H^N} + \te{ symmetric term} \\ 
& \les \epsilon^3 \nu^{-1/2} \nu^{-1/2} = \epsilon^3 \nu^{-1}.
\end{align*}

\section*{Acknowledgements}
The author would like to thank his advisor, Jacob Bedrossian, for suggesting an MHD stability problem and providing guidance throughout. This work was partially supported by Jacob Bedrossian's NSF CAREER grant DMS-1552826 and NSF RNMS \#1107444 (Ki-Net).

\bibliographystyle{abbrv} 

\end{document}